\newtheorem{theorem}{Theorem}
\newtheorem{corollary}[theorem]{Corollary}
\newtheorem{lemma}[theorem]{Lemma}
\newtheorem{example}[theorem]{\it Example}
\newtheorem{proposition}[theorem]{Proposition}
\newtheorem{definition}[theorem]{Definition}
\newtheorem{remark}[theorem]{\it Remark}
\font\tenBb=msbm10 \font\sevenBb=msbm7 \font\fiveBb=msbm5
\def\Bb{\fam\Bbfam\tenBb}
\def\R{{\Bb R}}
\def\C{{\Bb C}}
\def\N{{\Bb N}}
\def\Dk{{\mathcal D}_k}
\def\B{{\mathcal B}}
\begin{document}
\title[Sharp extensions for  solutions of abstract Cauchy  problems]{Sharp extensions for convoluted solutions of \\abstract Cauchy problems}

\author{Valentin  Keyantuo}
\address{University of Puerto Rico, Department of Mathematics,
Faculty of Natural Sciences, Box 70377,  San Juan PR 00936-8377 U.S.A.}
\email{keyantuo@uprrp.edu}
\thanks{\noindent P.J. Miana and L. S\'{a}nchez-Lajusticia have been partly supported by supported by Project MTM2010-16679, DGI-FEDER, of the MCYTS; Project E-64, D.G. Arag\'on, and JIUZ-2012-CIE-12, Universidad de Zaragoza, Spain.
This paper was written during several visits of P. J. Miana to the Department of Mathematics, University of Puerto Rico, R\'io Piedras Camppus. He is grateful to the Department for its hospitality.}

\author{Pedro J. Miana}
\address{Universidad de Zaragoza, Departamento de Matem\'{a}ticas \& I.U.M.A.,
50.009, Zaragoza.}
 \email{pjmiana@unizar.es}

 \author{Luis S\'{a}nchez-Lajusticia}
\address{Universidad de Zaragoza, Departamento de Matem\'{a}ticas \& I.U.M.A.,
50.009, Zaragoza.}
 \email{luiss@unizar.es}
\thanks{}

\thanks{ }

\subjclass[2010]{Primary 47D62, 47D06; Secondary 44A10, 44A35 }

\keywords{Abstract Cauchy problem; convoluted semigroups; Laplace transform, distribution semigroups}

\begin{abstract}
In this paper we give sharp extension results for convoluted solutions of abstract Cauchy problems in Banach spaces. The main technique is the use of algebraic structure (for usual convolution product $\ast$)  of these solutions which are defined by a version of the Duhamel formula. We define  algebra homomorphisms from a new class of test-functions and apply our results to concrete operators. Finally, we introduce the notion of $k$-distribution semigroups to extend previous concepts of distribution semigroups.

\end{abstract}
\maketitle

\section{introduction}
\setcounter{theorem}{0}

Let $A$ be a closed linear operator on a Banach space $X$ and $\tau>0.$ It is well known (see \cite[Theorem 3.1]{[vCa85]}, \cite{[ABHN]} or \cite[Theorem 2.1]{[AEK]}) that if for every $x\in X, $ the Cauchy problem
\begin{equation}\label{Cpvca}
\begin{cases}
\displaystyle{  u^\prime(t)}=A u(t)  +x,\, 0\le t<\tau\\
u(0)=0\\
 \end{cases}
\end{equation}
has a unique solution $\displaystyle u\in C^1([0,\, \tau), X)\cap C([0,\, \tau), D(A))$ (where $D(A)$ is endowed with the graph norm), then $A$ is the generator of a strongly continuous semigroup.\\ This means that the solutions, initially obtained  on $[0,\, \tau),$ admit extensions to $[0,\, \infty)$ without loss of regularity. Moreover, these solutions are (uniformly) exponentially bounded in the sense that there exist $M>0, \, \omega\in \mathbb{R}$ independent of $u(.)$ such that all solutions $u(.)$ satisfy $\Vert u(t)\Vert\le Me^{\omega t},\, t\ge 0.$   Also note that in this case, $A$ is necessarily densely defined.

 In 1833, J.M.C. Duhamel considered the following evolution problem corresponding to the initial-boundary value problem for the heat equation in a domain $\Omega$ ($\Omega$ is an open subset  of $\R^n$):
\begin{equation}\label{equa}
\begin{cases}
\displaystyle{\partial u\over \partial t}=\Delta u, &(t,x)\in \R^+ \times \Omega\\
u(0,x)= u_0(x), & x\in \Omega;\\
u(t, \cdot)\vert_{\partial \Omega}= g(t, \cdot), & t>0.\\
\end{cases}
\end{equation}
and proposed the following formula to express the solution of (\ref{equa}).
$$
u(t,x)=\int_0^t{\partial \over \partial t}u(\lambda, t-\lambda,x)d\lambda, \quad t>0,
$$
where $u(\lambda,t,x)$ is a  solution of  (\ref{equa}) for a particular function $g(\cdot, \lambda_0)$ fixed $\lambda_0$ (\cite{[Du]}). This formula allows one to reduce the Cauchy problem for an in-homogeneous partial differential equation to the Cauchy problem for the
corresponding homogeneous equation. This formula (known also as Duhamel's principle) is of widespread use  in partial differential equations and has been studied in  a large number of papers, see for example \cite{DL, Sa}. In the \cite{Sa}, the author extends the Duhamel principle to fractional order equations.

Let $k:(0,T) \to \C$  be a locally integrable function, $X$  a Banach space and $x\in X.$ The Cauchy problem

\begin{equation}\label{convsem-0}
\begin{cases}
\displaystyle{v'(t) = Av(t) + K(t)x} \quad &0<t<T,\\
 v(0) = 0,
\end{cases}
\end{equation}

is   called $K$-convoluted Cauchy problem where $K(t)=\int_0^tk(s)xds$ for $0<t<T$.
If there exists a solution of the abstract
initial value problem $u'(t) = Au(t)$ for $ 0<t < T,$ $u(0) = x$
then, as usual for a nonhomogeneous equation,  we have $v=u\ast K$ ($\ast$ is the usual convolution in $\R^+$).
Local $k$-convoluted semigroups are defined using a version of Duhamel's formula and were introduced in \cite{C, CL}. This class of semigroups includes $C_0$-semigroups and integrated semigroups as particular examples, see a complete treatment in \cite[Section 1.3.1]{[MF]}, \cite[Chapter 2]{ko} and  other details in \cite{KLM,  [KP]}. The concept of regularized semigroups is covered by taking $k(t)  \equiv C,\,\, 0\le t<\tau,$ where
$C$  is a bounded and injective operator on $X.$

 Contrary to what happens in the case of equation (\ref{Cpvca}), if $k:\, (0,\,\,\infty)\longrightarrow \C$ is locally integrable and for every $x\in X$ there exists a unique solution $\displaystyle u\in C^1([0,\, \tau), X)\cap C([0,\, \tau), D(A))$ for (\ref{convsem-0}), it is generally not the case that these solutions can be extended to $[0,\,\infty)$, nor that exponential boundedness is achieved in case one can extend the solutions. In this case, we say that $A$ is the generator of a local $k-$convoluted semigroup.  However, there is an underlying  algebraic structure of $k$-convoluted semigroups which leads to the following  the extension property: the solution of  $k$-convoluted problem on $[0, T)$ is used to express the solution of the $k\ast k$-convoluted problem on $[0,2T)$, see \cite[Section 2]{CL} and \cite[Theorem 2.1.1.9]{ko}. Stated otherwise, when (\ref{convsem-0}) is well posed on $[0,\,\tau)$, the equation in which we replace $k(.)$ with $(k\ast k)(.)$ is well posed on $[0,\,2\tau).$ Our result (Theorem \ref{ext}) provide a sharpening of this extension property.

The special case of $k(t)={t^{\alpha-1}\over \Gamma(\alpha)}$ with $\alpha >0$ defines the $\alpha$-times integrated semigroup. Originally they were the first example of convoluted semigroups. An extension formula for $n$-times integrated semigroups (for $n\in \N$) was given in \cite[Section IV, (4.2)]{[AEK]} and for $\alpha$-times integrated semigroup in \cite[Formula (5)]{M1} with $\alpha >0$. Extensions of local $\alpha$-times integrated $C$-semigroups were given in \cite[Theorem 6.1]{LS} and automatic extension of local regularized semigroups appears in \cite[Section 2]{[WG]}.

 The main objective of this paper is to illustrate  the algebraic structure of  local $k$-convoluted semigroups.  In \cite[Section 5]{KLM}, authors consider global exponentially bounded convoluted semigroups and algebra homomorphisms defined via these classes of semigroups; in fact both concepts are equivalent, see \cite[Theorem 5.7]{KLM}.

 In the context of  local convoluted semigroups as well as global non-exponentially bounded convoluted semigroups, this point of view is not so evident. This is due to the fact that the Laplace transform is an essential tool in the global exponential case. First we need some technical identities which involve convolution products in Section 2. In Section 3, we introduce a new test function space, ${\mathcal D}_{k^{\ast\infty}}$(in Definition \ref{definf}) which will play a fundamental role in this paper, see Theorem \ref{main2} and Appendix. In the fourth section, we give one of the main results of this paper. We derive a sharp extension theorem for local convoluted  semigroups (Theorem \ref{ext}). In Section 5, we use the extension formula to define algebra homomorphism from the test function space  ${\mathcal D}_{k^{\ast\infty}}$ via local convoluted semigroup (Theorem \ref{main2}). In Section 6, we apply our results to four concrete operators which generate (local and global) convoluted semigroups. In the global case and under the exponential boundedness assumption, the Laplace transform is used as a crucial tool. This is no longer the case when one consider the local case or the global one without the assumption of  exponential boundedness. In the case we consider in this paper,  algebras concerned are no longer Banach algebras but only locally convex algebras.

Historically distribution semigroups were introduced by J.L. Lions in \cite{[Li60]} in the early sixties in the exponential case with the Laplace transform of distributions as an important tool. The paper \cite{Ch71}  by  J. Chazarain presents an extension to the non exponential case and goes further to introduce the ultradistributional framework (see also the monograph \cite{[LM73]}). This class of vector-valued distribution (with a suitable algebraic structure) gives a equivalent approach to local integrated semigroups as was proven in  \cite[Theorem 7.2]{[AEK]}. For local convoluted semigroups, we present a similar approach in Appendix where we introduce $k$-distributions semigroups and we present their connections with local convoluted semigroups. The interest in the local case stems from the fact that for the general classes of generalized semigroups that have been introduced following Lions'  paper, by using the local approach, one is able to obtain a Banach space valued formulation that captures almost all the situations involved. The monographs \cite{ko}, \cite{[MF]} and the  references cited therein contain more information on distribution as well as ultradistribution semigroups. They also explore the ways in which they relate to local convoluted semigroups.

A similar and independent approach may be followed in the abstract Cauchy problem of second order or wave problem. In this case we need to consider local convoluted cosine functions and distribution cosine function (mainly  algebra homomorphism for cosine convolution product) see more details in \cite{MP}.

\section{Some identities for convolution products}
\setcounter{theorem}{0}

Let $L^1_{loc}(\R^+)$ the space of complex valued locally integrable functions on $\R^+$ and we consider the usual convolution product $\ast$, given by
$$\displaystyle (f\ast g)(t)=\int_0^tf(t-s)g(s)ds, \, t\ge0,$$
where $f, \, g\in L^1_{loc}(\R^+)$.  We write $k^{*2}$ instead of $k\ast k$ and then $k^{*n}=k*\left(k^{\ast(n-1)}\right)$ for $n> 2 $ is the $n-$fold convolution power of $k.$ The convolution product is associative and commutative. We also follow the notation $\circ$ to denote the dual convolution product of $\ast$ given by
$$\displaystyle (f\circ g)(t)=\int_t^\infty f(s-t)g(s)ds, \, t\ge0,$$
where $f, \, g\in L^1(\R^+)$. Note that
$$\max\{t\vert\, t\in {supp}(f\circ g)\}\le \max\{t\vert\, t\in {supp}( g)\}, \qquad f, \, g\in L^1(\R^+).$$

We denote by $\chi$ the constant function equal to $1$, i.e., $\chi(t)=1$ for $t\in \R^+$. This corresponds to the Heaviside function.
Moreover, for $\alpha>0,$  we set $j_\alpha(t)=\displaystyle \frac{t^{\alpha-1}}{\Gamma(\alpha)}, \,\, t\ge 0.$ It will be convenient to set $j_0=\delta_0,$ the Dirac measure concentrated at the origin. Observe that the following semigroup property holds: $\displaystyle j_\alpha\ast j_\beta=j_{\alpha+\beta},\,\, \alpha,\, \beta\ge 0.$  The following lemma will be used for the proof of the main result in Section 4.

\begin{lemma}\label{lemma} Take $0\le \tau \le t$ and $f,g\in L^1_{loc}(\R^+)$. Then
$$
\int_0^{t-\tau}f(t-s)\left(\chi\ast g\right)(s)ds+ \int_0^{\tau}g(t-s)\left(\chi\ast f\right)(s)ds=\left(g \ast \left(\chi\ast f\right)\right)(t)-  \left(\chi\ast g\right)(t-\tau)\left(\chi\ast f\right)(\tau).
$$
\end{lemma}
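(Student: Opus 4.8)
The plan is to verify the identity by writing everything in terms of ordinary convolutions on $\R^+$ and keeping careful track of the upper limits of integration. The key observation is that $\int_0^{t-\tau} f(t-s)\,h(s)\,ds$ is not quite $(f\ast h)(t)$: it is $(f\ast h)(t)$ with the "tail" $\int_{t-\tau}^t f(t-s)\,h(s)\,ds$ removed. So I would introduce, for $h\in L^1_{loc}(\R^+)$, the truncation and write
\[
\int_0^{t-\tau}f(t-s)\,h(s)\,ds=(f\ast h)(t)-\int_{t-\tau}^{t}f(t-s)\,h(s)\,ds,
\]
and similarly $\int_0^{\tau}g(t-s)\,(\chi\ast f)(s)\,ds$ is itself an honest partial convolution that I will massage via a change of variables $s\mapsto t-s$ into an integral over $[t-\tau,t]$. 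The natural route is therefore: apply this splitting to the first summand with $h=\chi\ast f$... wait, with $h=\chi\ast g$; then show that the leftover tail integral $\int_{t-\tau}^{t} f(t-s)(\chi\ast g)(s)\,ds$ cancels against (a transformed version of) the second summand $\int_0^\tau g(t-s)(\chi\ast f)(s)\,ds$ up to the boundary term $(\chi\ast g)(t-\tau)(\chi\ast f)(\tau)$.

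Concretely, first I would record the elementary fact that for $a\in L^1_{loc}$ one has $\frac{d}{ds}(\chi\ast a)(s)=a(s)$ a.e., together with $(\chi\ast a)(0)=0$, since $\chi\ast a$ is just the antiderivative $s\mapsto\int_0^s a(r)\,dr$. Using commutativity of $\ast$, rewrite $(f\ast(\chi\ast g))(t)=(g\ast(\chi\ast f))(t)$, so the right-hand side becomes $(f\ast(\chi\ast g))(t)-(\chi\ast g)(t-\tau)(\chi\ast f)(\tau)$. Now the claim reduces to
\[
\int_{t-\tau}^{t}f(t-s)\,(\chi\ast g)(s)\,ds
=(\chi\ast g)(t-\tau)(\chi\ast f)(\tau)-\int_0^{\tau}g(t-s)\,(\chi\ast f)(s)\,ds.
\]
In the left integral substitute $s=t-u$, $u\in[0,\tau]$, to get $\int_0^{\tau}f(u)\,(\chi\ast g)(t-u)\,du$. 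Now I would integrate by parts in $u$: write $f(u)=\frac{d}{du}(\chi\ast f)(u)$, and differentiate $(\chi\ast g)(t-u)$ in $u$, which produces $-g(t-u)$. The boundary term at $u=\tau$ is $(\chi\ast f)(\tau)(\chi\ast g)(t-\tau)$ and at $u=0$ it vanishes because $(\chi\ast f)(0)=0$; the remaining integral is $+\int_0^{\tau}(\chi\ast f)(u)\,g(t-u)\,du$, which after the substitution $u\mapsto t-s$ is exactly $\int_0^{\tau}g(t-s)\,(\chi\ast f)(s)\,ds$. Rearranging gives precisely the displayed reduced identity, and hence the lemma.

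The main obstacle is regularity: $f,g$ are only in $L^1_{loc}(\R^+)$, so the integration by parts needs justification. I would handle this by noting that $\chi\ast f$ and $\chi\ast g$ are absolutely continuous (being indefinite integrals of $L^1_{loc}$ functions), so the integration-by-parts formula for a product of an absolutely continuous function with an absolutely continuous function is valid on any compact subinterval of $\R^+$; alternatively one can prove the identity first for $f,g$ continuous (or smooth, by a density/approximation argument in $L^1_{loc}$ on $[0,t]$) and then pass to the limit, since both sides depend continuously on $f,g$ in the $L^1([0,t])$ topology. Either way the analytic content is light; the only real care is bookkeeping of the limits of integration and the vanishing of boundary terms at the origin.
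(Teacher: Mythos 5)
Your strategy is sound and is essentially the paper's own argument in a slightly different packaging: the paper integrates by parts directly in $\int_0^{t-\tau}f(t-s)(\chi\ast g)(s)\,ds$ (writing $f(t-s)=-\frac{d}{ds}\int_s^tf(t-u)\,du$) and then splits the resulting integral over $[0,t]$ into $[0,t-\tau]$ and $[t-\tau,t]$, whereas you first peel off the full convolution, use $f\ast\chi\ast g=g\ast\chi\ast f$, and integrate by parts on the tail $[t-\tau,t]$. Both proofs hinge on the same integration by parts producing the boundary term $(\chi\ast g)(t-\tau)(\chi\ast f)(\tau)$, and your justification via absolute continuity of $\chi\ast f$ and $\chi\ast g$ is if anything more careful than the paper's.

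There is, however, a sign slip in your displayed ``reduced identity.'' Writing $A=\int_0^{t-\tau}f(t-s)(\chi\ast g)(s)\,ds$, $B=\int_0^{\tau}g(t-s)(\chi\ast f)(s)\,ds$, $C=(g\ast(\chi\ast f))(t)$, $D=(\chi\ast g)(t-\tau)(\chi\ast f)(\tau)$ and $E=\int_{t-\tau}^{t}f(t-s)(\chi\ast g)(s)\,ds$, your decomposition gives $A=C-E$, so the claim $A+B=C-D$ is equivalent to $E=D+B$, not to $E=D-B$ as displayed (a quick check with $f=g=\chi$ confirms the plus sign). Your subsequent integration by parts in fact produces exactly $E=D+B$, so the computation proves the lemma; only the displayed target and the closing sentence ``rearranging gives precisely the displayed reduced identity'' need to be corrected to match. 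Also, the final step requires no substitution $u\mapsto t-s$: the integral $\int_0^{\tau}(\chi\ast f)(u)g(t-u)\,du$ is already $B$ after renaming the variable.
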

\begin{proof} Observe that $\displaystyle \frac{d}{ds}\int_s^t f(t-u)du=-f(t-s)$ and by simple change of variable we have:  $\displaystyle  \int_{t-\tau}^t f(t-u)du=\int_0^\tau f(s)ds$ and $\displaystyle  \int_s^t f(t-u)du=\int_0^{t-s}f(u)du.$  We integrate by parts in the following integral to obtain,
\begin{eqnarray*}
\int_0^{t-\tau}f(t-s)\int_0^sg(u)duds&=&-\int_{t-\tau}^tf(t-s)ds\int_0^{t-\tau}g(u)du+ \int_0^{t-\tau}g(s)\int_s^tf(t-u)duds\cr
&=&-\int_0^{t-\tau} g(u)du\int_0^{t-\tau}f(t-u)du+\int_0^{t-\tau}g(s)\int_0^s f(t-u)duds\cr
 &=&-\left(\chi\ast g\right)(t-\tau)\left(\chi\ast f\right)(\tau)+ \int_0^{t-\tau}g(s)\int_0^{t-s}f(u)duds, \cr
 \end{eqnarray*}
for $0\le \tau\le t$. Note that
\begin{eqnarray*}
 \int_0^{t-\tau}g(s)\int_0^{t-s}f(x)dxds&=& \int_0^{t}g(s)\int_0^{t-s}f(x)dxds- \int_{t-\tau}^tg(s)\int_0^{t-s}f(x)dxds\cr
 &=&\left(g\ast \left(\chi\ast f\right)\right)(t)-\int_{0}^\tau g(t-u)\int_0^{u}f(x)dxdu\cr
 &=&\left(g \ast \left(\chi\ast f\right)\right)(t)-\int_0^{\tau}g(t-u)\left(\chi\ast f\right)(u)du,
\end{eqnarray*}
and this  concludes the proof.
\end{proof}

Taking $f=j_\alpha $ and $g=j_\beta$ with $\alpha, \beta>0$ in Lemma \ref{lemma}, we get the equality
$$
\int_0^{t-\tau}{(t-s)^{\alpha-1}\over \Gamma(\alpha)}{s^{\beta}\over \Gamma(\beta+1)}ds+ \int_0^{\tau}{(t-s)^{\beta-1}\over \Gamma(\beta)}{s^{\alpha}\over \Gamma(\alpha+1)}ds={t^{\alpha+\beta}\over \Gamma(\alpha+\beta+1)}-  {(t-\tau)^{\beta}\over \Gamma(\beta+1)}{\tau^{\alpha}\over \Gamma(\alpha+1)}
$$
for $0\le \tau\le t$.

If we set $f=g$ in Lemma \ref{lemma}, we obtain:

\begin{corollary}\label{coro} Take $0\le \tau \le t$ and $f\in L^1_{loc}(\R^+)$. Then
\begin{equation}\label{equ}
\left(\int_0^{t-\tau}+ \int_0^{\tau}\right)f(t-s)\left(\chi\ast f\right)(s)ds=\left(f \ast \left(\chi\ast f\right)\right)(t)-  \left(\chi\ast f\right)(t-\tau)\left(\chi\ast f\right)(\tau).
\end{equation}
Further specializing to  $f=j_\alpha$ for $\alpha> 0$, yields the identity:
\begin{eqnarray*}
\left(\int_0^{t-\tau}+
\int_0^\tau\right) {(t-s)^{\alpha -1}\over \Gamma (\alpha
)}{s^{\alpha }\over \Gamma (\alpha+1 )}ds
={t^{2\alpha }\over \Gamma (2\alpha +1)}- {(t-\tau)^\alpha \over \Gamma (\alpha +1)}{\tau^\alpha \over
\Gamma (\alpha +1)}
\end{eqnarray*}
for $0\le \tau\le t$.
\end{corollary}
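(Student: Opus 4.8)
The plan is to obtain both displayed identities as immediate specializations of Lemma \ref{lemma}. First I would set $g=f$ in the statement of Lemma \ref{lemma}. The two integrals on its left-hand side then share the \emph{same} integrand $f(t-s)\,(\chi\ast f)(s)$, so they collapse into the single expression $\left(\int_0^{t-\tau}+\int_0^{\tau}\right)f(t-s)(\chi\ast f)(s)\,ds$; at the same time the right-hand side becomes $(f\ast(\chi\ast f))(t)-(\chi\ast f)(t-\tau)(\chi\ast f)(\tau)$. This is precisely \eqref{equ}, and it is valid for all $0\le\tau\le t$ and $f\in L^1_{loc}(\R^+)$ because Lemma \ref{lemma} is.

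For the second identity I would substitute $f=j_\alpha$ with $\alpha>0$ into \eqref{equ} and evaluate the two convolutions by means of the semigroup property $j_\alpha\ast j_\beta=j_{\alpha+\beta}$ together with $\chi=j_1$. This gives $\chi\ast f=j_1\ast j_\alpha=j_{\alpha+1}$, that is, $(\chi\ast f)(s)=s^{\alpha}/\Gamma(\alpha+1)$, and $f\ast(\chi\ast f)=j_\alpha\ast j_{\alpha+1}=j_{2\alpha+1}$, that is, $(f\ast(\chi\ast f))(t)=t^{2\alpha}/\Gamma(2\alpha+1)$. Feeding these into \eqref{equ} produces
\[
\left(\int_0^{t-\tau}+\int_0^{\tau}\right)\frac{(t-s)^{\alpha-1}}{\Gamma(\alpha)}\,\frac{s^{\alpha}}{\Gamma(\alpha+1)}\,ds=\frac{t^{2\alpha}}{\Gamma(2\alpha+1)}-\frac{(t-\tau)^{\alpha}}{\Gamma(\alpha+1)}\,\frac{\tau^{\alpha}}{\Gamma(\alpha+1)},
\]
which is the asserted formula.

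Since everything is a direct substitution, there is essentially no obstacle; the only point deserving a moment's care is the observation that, once $g$ is replaced by $f$, the two left-hand integrals in Lemma \ref{lemma} genuinely have a common integrand and therefore merge into the form written in \eqref{equ}. As a consistency check one may note that the $f=j_\alpha$ identity is exactly the $\beta=\alpha$ case of the explicit formula displayed immediately after the proof of Lemma \ref{lemma}.
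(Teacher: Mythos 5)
Your proposal is correct and is exactly the paper's route: the corollary is obtained by setting $f=g$ in Lemma \ref{lemma}, and the second display is the specialization $f=j_\alpha$ via $\chi=j_1$ and the semigroup property $j_\alpha\ast j_\beta=j_{\alpha+\beta}$ (equivalently, the $\beta=\alpha$ case of the identity displayed after Lemma \ref{lemma}). Nothing is missing.
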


As a consequence of the last corollary, we obtain another proof of the following result given  in \cite[Lemma 2.1.12]{ko} for continuous functions
and in \cite[Lemma 3.1]{[KS]} for $f(t)=\displaystyle{t^{\alpha-1}\over \Gamma(\alpha)}$ and $\alpha>0$.
\begin{corollary} \label{jojo} For $f\in L^1_{loc}(\R^+)$ and   $s,  u\ge 0$  we have
\begin{eqnarray*}
\left(\int_0^{s+u}-\int_0^s-\int_0^u\right)f(u+s-r)f(r)dr=0;
\end{eqnarray*}
in particular for $\alpha >0$ and $s,  u\ge 0$, we get that
\begin{eqnarray*}
\left(\int_0^{s+u}-\int_0^s-\int_0^u\right)(u+s-r)^{\alpha-1}r^{\alpha-1}dr=0;
\end{eqnarray*}
\end{corollary}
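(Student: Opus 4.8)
The plan is to deduce Corollary \ref{jojo} from the single symmetric identity
\begin{equation*}
\left(\int_0^{t-\tau}+\int_0^{\tau}\right)f(t-r)f(r)\,dr=(f\ast f)(t),\qquad 0\le\tau\le t,\qquad(\ast)
\end{equation*}
applied with $t=s+u$ and $\tau=u$ (so that $t-\tau=s$): this immediately gives the first displayed equality, and the second one then follows by taking $f=j_\alpha$ with $\alpha>0$ --- recall that $j_\alpha\in L^1_{loc}(\R^+)$ and $t^{\alpha-1}=\Gamma(\alpha)j_\alpha(t)$, so the factor $\Gamma(\alpha)^{-2}$ cancels throughout.

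To establish $(\ast)$ directly I would write $(f\ast f)(t)=\int_0^t f(t-r)f(r)\,dr$ and split the interval of integration at the point $r=t-\tau$, which lies in $[0,t]$. The contribution from $[0,t-\tau]$ is already the first term of $(\ast)$. In the contribution from $[t-\tau,t]$ I would substitute $r\mapsto t-r$: this carries $[t-\tau,t]$ onto $[0,\tau]$ and leaves the integrand unchanged, since $f(t-r)f(r)$ is symmetric under $r\leftrightarrow t-r$; hence that contribution equals $\int_0^{\tau}f(t-r)f(r)\,dr$, the second term of $(\ast)$. Adding the two pieces yields $(\ast)$.

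This is really the same computation that underlies Corollary \ref{coro}, and one can also extract $(\ast)$ from that corollary as follows: put $F=\chi\ast f$, so Corollary \ref{coro} reads $\left(\int_0^{t-\tau}+\int_0^{\tau}\right)f(t-r)F(r)\,dr=(f\ast F)(t)-F(t-\tau)F(\tau)$; differentiating in $t$ (with $\tau$ fixed, using $F'=f$ and $F(0)=0$) produces the term $-f(t-\tau)F(\tau)$ on both sides --- on the left from the moving endpoint of the $\int_0^{\tau}$-piece, on the right from $\tfrac{d}{dt}\bigl(F(t-\tau)F(\tau)\bigr)$ --- and these cancel, leaving exactly $(\ast)$.

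Neither route presents a genuine obstacle; the only points deserving care are of a measure-theoretic nature. For $f$ merely in $L^1_{loc}(\R^+)$ the convolution $(f\ast f)(t)$ and the partial integrals in $(\ast)$ are finite for almost every choice of the endpoints (by Tonelli), and the asserted identities are to be read at such points. This is also why I would favour the direct splitting argument, which requires only a change of variable in a single one-dimensional integral and no differentiation of $t\mapsto\int_0^{t-\tau}f(t-r)F(r)\,dr$, hence no regularity of $f$ beyond local integrability.
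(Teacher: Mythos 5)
Your argument is correct, and it takes a genuinely different and more elementary route than the paper. The paper obtains Corollary \ref{jojo} by differentiating the integrated identity of Corollary \ref{coro} (itself a specialization of the integration-by-parts Lemma \ref{lemma}) with respect to $t$: it rewrites the left-hand side by a change of variables, computes the $t$-derivatives of $\left(f\ast(\chi\ast f)\right)(t)$, of $(\chi\ast f)(t-\tau)(\chi\ast f)(\tau)$ and of the two moving-endpoint integrals, and observes that the boundary terms $-f(t-\tau)(\chi\ast f)(\tau)$ cancel on both sides --- exactly the computation you sketch as your secondary route. Your primary route bypasses all of this: splitting $(f\ast f)(t)=\int_0^t f(t-r)f(r)\,dr$ at $r=t-\tau$ and applying the substitution $r\mapsto t-r$ on $[t-\tau,t]$, which preserves the symmetric integrand, gives the identity $\left(\int_0^{t-\tau}+\int_0^{\tau}\right)f(t-r)f(r)\,dr=(f\ast f)(t)$ in one line, and the corollary follows with $t=s+u$, $\tau=u$. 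What your version buys is both brevity and honesty about regularity: it needs no differentiation at all, so it is valid at precisely those points where the convolution integral converges absolutely (a.e.\ for general $f\in L^1_{loc}(\R^+)$, and for every $s,u$ in the $j_\alpha$ case), whereas the paper's differentiation of $t\mapsto(\chi\ast f)(t-\tau)$ and of $t\mapsto\left(f\ast(\chi\ast f)\right)(t)$ holds only almost everywhere for merely locally integrable $f$, a point the paper does not address. What the paper's longer route buys is coherence with the rest of Section 2: it exhibits the corollary as a by-product of the two-function Lemma \ref{lemma}, which is the identity actually needed later in the proof of Theorem \ref{ext}.
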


\begin{proof} By change of variable, we write the identity (\ref{equ}) as
$$
\left(f \ast \left(\chi\ast
  f\right)\right)(t)-  \left(\chi\ast f\right)(t-\tau)\left(\chi\ast f\right)(\tau)= \left(\int_\tau^{t} +\int_{t-\tau}^t \right)
{f(x)}(\chi \ast f)(t-x)dx    $$
for $0\le \tau \le t$.

Now we observe that $$\displaystyle \frac{d}{dt}\left(f\ast(\chi\ast f)\right)(t)=(f\ast f)(t).$$ Similarly, we have
$$ \displaystyle \frac{d}{dt}\left((\chi\ast f)(t-\tau)(\chi\ast f)(\tau)\right)=f(t-\tau)(\chi\ast f)(\tau),$$
\begin{eqnarray*}
\frac{d}{dt}\int_t^\tau f(u)(\chi\ast f)(t-u)du&=&-f(t)(\chi\ast f)(0)-\int_\tau^t f(u)f(t-u)du\cr
&=&\int_t^\tau f(u)f(t-u)du,\cr
\end{eqnarray*}
and
\begin{eqnarray*}
\frac{d}{dt}\int_{t-\tau}^t f(u)(\chi\ast f)(t-u)du&=& f(t)(\chi\ast f)(0)+\int_0^t f(u)f(t-u)du\cr
&&-f(t-\tau)(\chi\ast f)(\tau)-\int_0^{t-\tau} f(u)f(t-u)du\cr
&=&\int_0^t f(u)f(t-u)du-f(t-\tau)(\chi\ast f)(\tau)\cr
&&-\int_0^{t-\tau} f(u)f(t-u)du\cr
\end{eqnarray*}
Differentiating with respect to the variable $t$ and using the above, we have:
\begin{eqnarray*}(f\ast f)(t)&=&
\int_\tau^{t}f(x)f(t-x)dx+
\int_{t-\tau}^t f(x)f(t-x)dx\cr
&=&
\int_0^{t-\tau}
f(t-s)f(s)ds+
\int_{0}^\tau f(t-s)f(s)ds\cr
\end{eqnarray*}
for $0\le \tau \le t$. Now take $t=s+u$, and $\tau=s$ and we conclude the proof.
\end{proof}

Take  $k\in L^1_{loc}([0,\tau))$, and we define $(k_t)_{t\in[0,\tau)}\subset L^1_{loc}([0,\tau))$ by
\begin{equation}\label{form}
k_t(s):= k(t-s)\chi_{[0, t]}(s), \qquad s\in [0,\tau).
 \end{equation}
A similar result was considered  in \cite[Proposition 2.2]{KLM} for functions belong to $L^1_{loc}(\R^+)$. Here we present a direct proof for  $ L^1_{loc}([0,\tau))$.

 \begin{theorem}\label{k-convo}Take  $k\in L^1_{loc}([0,\tau))$ and $(k_t)_{t\in[0,\tau)}$ defined by (\ref{form}). Then

 $$
k_t\ast k_s(x)= \int_t^{t+s}k(t+s-r)k_r(x)dr-\int_0^{s}k(t+s-r)k_r(x)dr, \qquad 0\le x<\tau,
$$
for  $0\le s,t\le t+s<\tau$.

 \end{theorem}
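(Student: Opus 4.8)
The plan is to compute both sides of the asserted identity directly as integrals over $[0,x]$ (since both $k_t \ast k_s$ and each $k_r$ are supported in $[0,t+s]$, and we only care about $0\le x<\tau$), and to reduce everything to the convolution identity already established in Corollary \ref{jojo}. First I would write out the left-hand side: by definition of $\ast$ and of $k_t$, for $0\le x<\tau$ we have
$$
(k_t\ast k_s)(x)=\int_0^x k_t(x-y)k_s(y)\,dy=\int_0^x k(t-x+y)\chi_{[0,t]}(x-y)\,k(s-y)\chi_{[0,s]}(y)\,dy,
$$
so the integrand is supported where $x-t\le y\le \min(x,s)$ and $y\ge 0$; I expect the cleanest bookkeeping is to keep it as $\int k(t-(x-y))k(s-y)\,dy$ over the appropriate $y$-range and not expand the indicators prematurely.

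Next I would expand the right-hand side. Using \eqref{form}, $k_r(x)=k(r-x)\chi_{[0,r]}(x)$, so
$$
\int_t^{t+s}k(t+s-r)k_r(x)\,dr-\int_0^{s}k(t+s-r)k_r(x)\,dr
=\left(\int_t^{t+s}-\int_0^s\right)k(t+s-r)k(r-x)\chi_{[0,r]}(x)\,dr.
$$
For $x$ fixed, the factor $\chi_{[0,r]}(x)$ is $1$ precisely when $r\ge x$; after the substitution $r\mapsto t+s-r$ (or directly), each of these two integrals becomes a truncated convolution of $k$ with itself evaluated at a shifted argument. The aim is to massage the combination $\int_t^{t+s}-\int_0^s$ into the form $\int_0^{s+u}-\int_0^s-\int_0^u$ (with a suitable $u$ depending on $t$, $s$, $x$) so that Corollary \ref{jojo} applies and the three-term combination collapses. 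Concretely, after changing variables so the integrand reads $k(\sigma)k((\text{const})-\sigma)$, the three ranges of integration coming from the two RHS integrals and the single LHS integral should add up with signs $+,-,-$ (or $-,+,+$) over intervals $[0,a+b]$, $[0,a]$, $[0,b]$, which is exactly the hypothesis of Corollary \ref{jojo}.

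The main obstacle will be the careful handling of the truncations: the indicator $\chi_{[0,t]}$ inside $k_t$ on the left, the indicators $\chi_{[0,r]}(x)$ on the right, and the fact that the endpoints of integration ($t$, $t+s$, $s$, $x$) interact differently depending on whether $x\le s$, $s\le x\le t$, etc. I would organize this by reducing, via the substitutions above, every term to an integral of $k(\cdot)k(\text{affine}-\cdot)$ over an interval of the form $[0,\text{length}]$, at which point the case distinctions disappear and Corollary \ref{jojo} (applied with the two "lengths" read off from the shifted arguments) finishes the proof. A useful sanity check along the way is to differentiate both sides in $s$ or in $x$ and compare, or to verify the identity for $k=j_\alpha$ against the explicit formula in Corollary \ref{coro}, which should confirm that the signs and endpoints have been matched correctly.
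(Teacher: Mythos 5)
Your proposal is correct and follows essentially the same route as the paper: write out both sides with the indicators from (\ref{form}), split into cases according to the position of $x$ relative to $s$, $t$ and $t+s$, and use changes of variables to reduce every term to an integral of $k(\sigma)k(c-\sigma)$, $c=t+s-x$, over subintervals of $[0,c]$. The only (minor) difference is that you close the computation by invoking Corollary \ref{jojo}, whereas the paper's proof of Theorem \ref{k-convo} cancels the two resulting integrals directly because their integrands coincide after the substitutions; since the paper itself notes (in the remark following Proposition \ref{equiv}) that Corollary \ref{jojo} yields the semigroup identity for $(k_t)$, your variant is sound and would carry through each of the cases as you describe.
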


 \begin{proof} We consider (without loss of generality) that $0\le s\le t$. First we consider $0\le x\le s$. Then
$$
k_t\ast k_s(x)= \int_0^xk(t-(x-y))k(s-y)dy, \quad 0\le x<\tau,
$$
and
 \begin{eqnarray*}
 &\,&\left( \int_t^{t+s}-\int_0^{s}\right)k(t+s-r)k_r(x)dr= \left( \int_t^{t+s}-\int_x^{s}\right)k(t+s-r)k(r-x)dr\cr
 &\,& =\int_0^{s}k(u)k(t+s-u-x)du-\int_0^{s-x}k(t+s-x-y)k(y)dy\cr
 &\,&=\int_{s-x}^{s}k(u)k(t+s-u-x)du= \int_0^xk(s-y)k(t-x+y)dy
 \end{eqnarray*}
 where we have changed  variables  in several equalities. The other cases $s\le x\le t$, $t\le x\le t+s$ and $ t+s\le x<\tau$ are made following ideas. In particular we remark that $k_t\ast k_s(x)=0$ for $t+s\le x<\tau$.
 \end{proof}

\begin{remark} {\rm By Proposition \ref{equiv} and Theorem \ref{k-convo}, we may conclude that $(k_t)_{t\in[0,\tau)}$ is a local $k$-convoluted semigroup in $L^1_{loc}([0,\tau))$. In fact, note that $k_t= \delta_t\ast k$ where $(\delta_t)_{t\ge 0}$ is the Dirac measure concentrated at  $t= 0$. In this sense, $(k_t)_{t\in[0,\tau)}$ is the canonical local $k$-convoluted semigroup.
}
\end{remark}

\section{The Laplace transform and $k$-Test function spaces}\label{seccion}
\setcounter{theorem}{0}

Let $k\in L^{1}_{loc}(\mathbb{R}^{+})$.  We write by $\widehat k$ the usual Laplace transform of $k$, given by
$$
\widehat k(\lambda)= \lim_{N\to \infty}\int_0^N e^{-\lambda t} k(t)dt,
$$
in the case that there exists for $\lambda \in \C$; $\hbox{abs}(k)$ is defined by
$
\hbox{abs}(k):=\inf\{\Re \lambda; \hbox{ exist } \widehat k(\lambda)\},
$
see \cite[Section 1.4]{[ABHN]}. In the case that $\vert k(t)\vert \le Me^{\omega t}$ for a.e. $t\ge 0$ and $M, \omega>0$, we have that
$$
\widehat k(\lambda)= \int_0^\infty e^{-\lambda t} k(t)dt, \qquad \Re \lambda >\omega.
$$
  For $\lambda\in\C,$ we write $e_\lambda(t)=e^{-\lambda t},\, t\ge 0.$
\begin{lemma}\label{almo} Take $k\in L^{1}_{loc}(\mathbb{R}^{+})$ such that  $\vert k(t)\vert \le Me^{\omega t}$ for a.e. $t\ge 0$ and $M, \omega>0$. Then
$$
k\circ e_{-\lambda}=\widehat k(\lambda)e_{-\lambda}, \qquad \Re \lambda >\omega.
$$
In particular, if $\widehat k(\lambda)=0$ for some $\Re \lambda >\omega$, then $k\circ e_{-\lambda}=0$.
\end{lemma}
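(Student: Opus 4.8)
The plan is to compute the dual convolution $k\circ e_{-\lambda}$ directly from its integral definition and recognize the Laplace transform. Recall that for $f,g\in L^1(\R^+)$ we have $(f\circ g)(t)=\int_t^\infty f(s-t)g(s)\,ds$. Here $g=e_{-\lambda}$, so $g(s)=e^{\lambda s}$ in the notation of the paper (since $e_\mu(t)=e^{-\mu t}$, we have $e_{-\lambda}(t)=e^{\lambda t}$), and $f=k$. Thus, for $t\ge 0$,
\begin{equation*}
(k\circ e_{-\lambda})(t)=\int_t^\infty k(s-t)e^{\lambda s}\,ds.
\end{equation*}
First I would justify absolute convergence of this integral for $\Re\lambda>\omega$: substituting $r=s-t$ gives $\int_0^\infty k(r)e^{\lambda(r+t)}\,dr=e^{\lambda t}\int_0^\infty k(r)e^{\lambda r}\,dr$, and the integrand is dominated by $Me^{\omega r}e^{\Re\lambda\cdot r}$... wait, that is $e^{(\omega+\Re\lambda)r}$, which does not decay. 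So I must be more careful: $e_{-\lambda}(s)=e^{\lambda s}$ is the wrong reading. In fact $k\circ e_{-\lambda}$ should be read with $g=e_{-\lambda}$ meaning $g(s)=e^{-(-\lambda)s}=e^{\lambda s}$ — so the issue is genuine and the statement must implicitly intend $e_{-\lambda}(s)=e^{-\lambda s}$ via a consistent but opposite sign convention, or else one reinterprets the dual convolution. The cleanest fix: take $g=e_\lambda$, i.e. $g(s)=e^{-\lambda s}$; then $\int_0^\infty k(r)e^{-\lambda(r+t)}\,dr=e^{-\lambda t}\widehat{k}(\lambda)$ converges absolutely for $\Re\lambda>\omega$ since $|k(r)e^{-\lambda r}|\le Me^{(\omega-\Re\lambda)r}$.

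So the key computation is: for $\Re\lambda>\omega$,
\begin{equation*}
(k\circ e_\lambda)(t)=\int_t^\infty k(s-t)e^{-\lambda s}\,ds \overset{r=s-t}{=}\int_0^\infty k(r)e^{-\lambda(r+t)}\,dr = e^{-\lambda t}\int_0^\infty k(r)e^{-\lambda r}\,dr = \widehat{k}(\lambda)\,e_\lambda(t).
\end{equation*}
The substitution is the whole content; the dominated-convergence / Tonelli justification of convergence is the only technical point, and it is immediate from the exponential bound. The final sentence of the lemma follows at once: if $\widehat k(\lambda)=0$ then the right-hand side vanishes identically, so $k\circ e_\lambda=0$.

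The main (minor) obstacle is reconciling the sign convention in the notation $e_\lambda(t)=e^{-\lambda t}$ with the symbol $e_{-\lambda}$ appearing in the statement; once one reads it so that the decaying exponential is convolved (which is forced by the requirement that the dual convolution integral converge), the proof is a one-line change of variables. I would present it with the convergence justification stated explicitly for $\Re\lambda>\omega$ and then invoke Fubini only implicitly, since no interchange of integrals is actually needed — just the translation $r=s-t$ inside a single absolutely convergent integral.
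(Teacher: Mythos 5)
Your proof is correct and is essentially identical to the paper's: a single change of variables $r=s-t$ in the absolutely convergent integral $\int_t^\infty k(s-t)e^{-\lambda s}\,ds$, with convergence guaranteed by $|k(r)e^{-\lambda r}|\le Me^{(\omega-\Re\lambda)r}$. Your remark on the sign convention is well taken --- the paper's own proof silently reads $e_{-\lambda}(s)=e^{-\lambda s}$ (the decaying exponential), exactly the reading you settled on, so the displayed convention $e_\lambda(t)=e^{-\lambda t}$ in Section 3 is the inconsistency, not your computation.
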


\begin{proof} Take $\lambda \in \C$ with $\Re \lambda >\omega$ and
$$
k\circ e_{-\lambda}(t)=\int_t^\infty k(s-t) e^{-\lambda s}ds=\int_0^\infty k(u) e^{-\lambda u}due^{-\lambda t}=\widehat k(\lambda)e_{-\lambda}(t)
$$
for $t>0$.
\end{proof}

We write by ${supp}(h)$ the usual support of a function $h$ defined in $\R$;  $\mathcal{D}$ is the space of $\mathcal{C}^{(\infty)}$ functions with compact support on $\R$ and $\mathcal{D}_0$ is the set of $\mathcal{C}^{(\infty)}$ functions with compact support on $[0, \infty)$, $\mathcal{D}_0\subset \mathcal{D}$. The space $\mathcal{D}$  will be equipped with  the Schwartz topology which turns it into a complete topological vector space. We denote the topology by  $\tau.$ In particular, sequential convergence in $\mathcal{D}$ is described by: let $(\phi_n)_{n\ge 1}\subset \mathcal{D}, \phi\in \mathcal{D}$, then $\phi_n\to_{\tau}\phi $ if and only if
\begin{itemize}
\item[(i)] there exists a compact subset $K\subset \R$ such that ${supp}(\phi_n), {supp}(\phi)\subset K$.
\item[(ii)] for any $j\ge 0$, $\phi^{(j)}_n\to \phi^{(j)}$ uniformly on compacts sets.
\end{itemize}
Note that $\mathcal{D}_0$ is a closed subspace of $\mathcal{D}$ and then $(\mathcal{D}_0, \tau)$ is a complete topological space (we keep the same notation for the topology $\tau$ and its restriction to the subspace $\mathcal{D}_0$).

We denote by $\mathcal{D}_+$ the set of functions defined by  $\phi_+:[0, \infty) \to \C$, given by $\phi_+(t):=\phi(t)$ for $t\ge 0$ and $\phi\in \mathcal{D}$ and define $\mathcal{K}: \mathcal{D}\to \mathcal{D}_+$ by $\mathcal{K}(\phi)=\phi_+$ for $\phi\in \mathcal{D}$.
Due to the extension theorem of R. T. Seeley \cite{[Se]}, there exists a linear continuous operator $\Lambda: \mathcal{D}_+\to \mathcal{D}$, such that $\mathcal{K}\Lambda = I_{\mathcal{D}_+}$; in particular if $\psi $ is a $\mathcal{C}^{(\infty)}$ functions on $[0,\infty)$ and  compact support  then $\psi \in \mathcal{D}_+$. The space  $\mathcal{D}_+$ is also  a complete topological vector spaces equipped with the $\tau$-topology of uniform convergence on compact subsets.

We define the operator $T'_{k}: \mathcal{D} \rightarrow \mathcal{D}$   by $f\mapsto T'_{k}(f):=k\circ f,$ that is,
 \begin{eqnarray*}
 T^\prime_k(f)(t)=\int_t^\infty k(s-t)f(s)ds,\, t\ge 0.
 \end{eqnarray*}
 We shall also use the same notation for the restriction to $\mathcal{D}_+$ that is, $T'_{k}: \mathcal{D}_+ \rightarrow \mathcal{D}_+$; however $T'_{k}:\mathcal{D}_0\not \rightarrow \mathcal{D}_0$. Note that $T'_k(f_u)= (T'_k(f))_u$,  where $f_u(t)=f(u+t)$ for $u,t\ge 0$ and $f\in \mathcal{D}_+ $.

In the case that $0\in {supp}(k)$, we have that $T'_{k}:\mathcal{D}_{+}\rightarrow\mathcal{D}_{+}$ is an injective, linear and continuous homomorphism such that
$$T'_{k}(f\circ g) = f \circ T'_{k}(g),\qquad f,g\in\mathcal{D}_{+},$$
see \cite[Theorem 2.5]{KLM}.
Then, we  define the space $\mathcal{D}_{k}$ by $\mathcal{D}_{k}:=T'_{k}(\mathcal{D}_{+})\subset \mathcal{D}_{+} $ and the right inverse map of  $T'_{k}$, i.e., $W_{k}:\mathcal{D}_{k} \rightarrow \mathcal{D}_{+}$ by
$$f(t)= T'_{k}(W_{k}(f))(t)=\int_{t}^{\infty}k(s-t)W_{k}f(s)ds,\qquad f\in\mathcal{D}_{k}, \quad t\geq0,$$
see \cite[Definition 2.7]{KLM}.  Note that the operator $W_{k}:\mathcal{D}_{+} \rightarrow \mathcal{D}_{+}$ is a closed operator $(D(W_k)=\mathcal{D}_{k})$, but we cannot apply the open mapping theorem to conclude that it is continuous.

It is clear that the subspace $\mathcal{D}_{k}$ is also a topological algebra: take $f, g \in\mathcal{D}_{k}$, then $f\ast g \in  \mathcal{D}_{k}$ (\cite[Theorem 2.10]{KLM}) and the map  $(f, g)\to f\ast g$ is continuous in $\mathcal{D}_{k}$. Moreover  $W_k(k\circ f)=f$  for $f \in \mathcal{D}_{+}$ and $f_u \in \mathcal{D}_{k}$, with
\begin{equation}\label{transla}
W_k(f_u)= (W_k(f))_u, \qquad f\in  \mathcal{D}_{k},\quad u\ge0.
\end{equation}

We have the following property to the effect that $W_k$ does not increase the support.\\
\begin{lemma} \label{equivalencia} Let $k\in L^{1}_{loc}(\mathbb{R}^{+})$ be such that  $0\in {supp}(k)$ and let $a>0$. Then $supp(f)\subset [0,a]$ if and only if $supp(W_kf)\subset [0,a]$ for $f\in\mathcal{D}_{k}$.
\end{lemma}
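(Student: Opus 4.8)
The plan is to prove both implications directly from the integral representation $f(t) = T'_k(W_kf)(t) = \int_t^\infty k(s-t)\,W_kf(s)\,ds$ that defines $W_k$, exploiting that $0\in\mathrm{supp}(k)$. For the ``only if'' direction I would argue by contradiction: suppose $\mathrm{supp}(f)\subset[0,a]$ but $g:=W_kf$ has a point of its support beyond $a$. Let $b := \sup(\mathrm{supp}(g)) > a$ (which is finite since $g\in\mathcal{D}_+$ has compact support). Evaluating the defining identity at a point $t$ slightly below $b$, the integral $\int_t^\infty k(s-t)g(s)\,ds$ is concentrated on $s\in[t,b]$, i.e.\ on $s-t\in[0,b-t]$ with $b-t$ small; because $0\in\mathrm{supp}(k)$ the kernel $k$ does not vanish identically near $0$, so one should be able to show this integral is not identically zero for $t$ near $b$. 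But $f(t)=0$ for $t>a$, hence for $t\in(a,b)$, a contradiction. The cleanest way to make ``not identically zero near $b$'' precise is to invoke exactly the mechanism already available in the paper: $T'_k$ is injective on $\mathcal{D}_+$ (cited from \cite[Theorem 2.5]{KLM}), and more to the point, one can use the translation compatibility $T'_k(g_u) = (T'_k(g))_u = f_u$ together with the known support-decreasing property of $\circ$, namely $\max\{t : t\in\mathrm{supp}(h_1\circ h_2)\}\le \max\{t : t\in\mathrm{supp}(h_2)\}$ recorded in Section 2.

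Concretely, for the forward implication I would proceed as follows. Since $f = k\circ g$ (writing $g = W_kf$), the support inequality for $\circ$ from Section~2 gives immediately that $\max\mathrm{supp}(f)\le \max\mathrm{supp}(g)$; this is the ``easy'' half of that inequality but runs the wrong way, so instead I use injectivity of $T'_k$. Assume $\mathrm{supp}(g)\not\subset[0,a]$ and pick the largest element $b$ of $\mathrm{supp}(g)$, so $b>a$. Consider the translated function $g_{a}$, where $g_u(t)=g(u+t)$; then $g_a\neq 0$ as an element of $\mathcal{D}_+$ (its support reaches up to $b-a>0$), and by the identity $T'_k(g_a) = (T'_k g)_a = f_a$. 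But $f_a(t) = f(a+t) = 0$ for all $t\ge 0$ since $\mathrm{supp}(f)\subset[0,a]$. Thus $T'_k(g_a) = 0$ with $g_a\in\mathcal{D}_+$, and injectivity of $T'_k$ on $\mathcal{D}_+$ forces $g_a = 0$, a contradiction. Hence $\mathrm{supp}(g)\subset[0,a]$.

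For the ``if'' direction, suppose $\mathrm{supp}(W_kf)\subset[0,a]$. Writing $g=W_kf$ and using $f = k\circ g$, we compute $f(t) = \int_t^\infty k(s-t)g(s)\,ds = \int_t^a k(s-t)g(s)\,ds$, which is plainly $0$ whenever $t>a$ because the domain of integration is empty; hence $\mathrm{supp}(f)\subset[0,a]$. This direction is essentially immediate from the shape of the dual convolution $\circ$, so no contradiction argument is needed. The main obstacle I anticipate is making the forward direction rigorous without circularity: one must be careful that the translation identity $T'_k(g_u) = (T'_k g)_u$ is available for $g\in\mathcal{D}_+$ (not just $g\in\mathcal{D}_k$) — which it is, since it was stated for $T'_k$ on $\mathcal{D}_+$ right after the definition of $T'_k$ — and that injectivity of $T'_k$ on $\mathcal{D}_+$ (valid precisely because $0\in\mathrm{supp}(k)$, from \cite[Theorem 2.5]{KLM}) is the only nontrivial input. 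With those two facts in hand the argument is short; the delicate point is simply choosing the translation amount $a$ rather than trying to localize the kernel $k$ near $0$ by hand.
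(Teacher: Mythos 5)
Your proof is correct and follows essentially the same route as the paper: translate by $a$, observe that $T'_k\bigl((W_kf)_a\bigr)=f_a=0$, and conclude $(W_kf)_a=0$ — the paper does this by invoking the Titchmarsh--Foia\c{s} theorem directly, which is exactly the content of the injectivity of $T'_k$ on $\mathcal{D}_+$ that you cite. The converse direction is handled identically in both arguments.
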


\begin{proof} Take $f\in\mathcal{D}_{k}$ such that $supp(f)\subset [0,a]$. Then
$$
0= f(t)=k\circ W_kf(t)= \int_t^\infty k(s-t)W_kf(s)ds, \qquad t\ge a,
$$
We write $t=a+r$ for $t\ge a$ and $r\ge 0$,
\begin{eqnarray*}
0= \int_t^\infty k(s-t)W_kf(s)ds= \int_0^\infty k(x)W_kf(x+a+r)dx= \int_r^\infty k(x-r)(W_kf)_a(x)dx,
\end{eqnarray*}
where $(W_kf)_a(x):=(W_kf)(x+a)$ for $x>0$. We apply the Titchmarsh-Foia\c s \cite[Theorem 2.4]{KLM} to conclude that $(W_kf)_a(x)=0$ for $x>0$, i.e., $supp(W_kf)\subset [0,a]$. Conversely, suppose that $supp(W_kf)\subset [0,a]$. It then follows from the representation
$$
f(t)=\int_t^\infty k(s-t)W_kf(s)ds, \qquad t\ge 0,
$$
that  $supp(f)\subset [0,a].$
\end{proof}

Note that in the case that $f\in \mathcal{D}_{k}$ then  $f^{(n)}\in \Dk$ and \begin{equation}\label{derivada}W_k(f^{(n)})=(W_kf)^{(n)}, \qquad n\ge 1;\end{equation}
  take $k,l \in L^1_{loc}(\R^+)$ such that $0\in
{supp}(k)\cap{supp}(l)$.  Then $0\in {supp}(k\ast l)$, ${\mathcal D}_{k\ast l}\subset
\Dk\cap {\mathcal D}_l$ and
 \begin{equation}\label{cop}
 W_kf=l\circ W_{k\ast l}f, \quad f\in {\mathcal D}_{k\ast l}.
 \end{equation}  see \cite[Lemma 2.8]{KLM}. A consequence of (\ref{cop}) is the following lemma.

 \begin{lemma}\label{infin} Take $k\in L^{1}_{loc}(\mathbb{R}^{+})$ such that  $0\in {supp}(k)$. Then
\item[(i)]   $ \mathcal{D}_{k^{\ast n}}\hookrightarrow \mathcal{D}_{k^{\ast m}}$ for $n\ge m \ge 1$.
\item[(ii)] $W_{k^{\ast m}}f= k^{n-m}\circ W_{k^{\ast n}}f=  W_{k^{\ast n}}(k^{n-m}\circ f)$ and if ${supp}(W_{k^{\ast n}}f)\subset I$ with $I$ an interval in $\R^+$ then
 ${supp}(W_{k^{\ast m}}f)\subset I$
 for $f\in \mathcal{D}_{k^{\ast n}}$ and $n\ge m \ge 1$.
\end{lemma}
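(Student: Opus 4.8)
The plan is to reduce everything to the single identity (\ref{cop}), $W_k f = l \circ W_{k\ast l} f$ for $f \in \mathcal{D}_{k\ast l}$, applied with carefully chosen convolution powers of $k$ in the roles of $k$ and $l$. First I would prove (ii) and then read off (i) as an immediate consequence. For (ii), fix $n \ge m \ge 1$ and apply (\ref{cop}) with $k$ replaced by $k^{\ast m}$ and $l$ replaced by $k^{\ast(n-m)}$; since $0 \in \operatorname{supp}(k)$ implies $0 \in \operatorname{supp}(k^{\ast m})$ and $0 \in \operatorname{supp}(k^{\ast(n-m)})$ (when $n > m$; the case $n = m$ is trivial), and since $k^{\ast m} \ast k^{\ast(n-m)} = k^{\ast n}$, we get directly that $\mathcal{D}_{k^{\ast n}} \subset \mathcal{D}_{k^{\ast m}}$ and
\[
W_{k^{\ast m}} f = k^{\ast(n-m)} \circ W_{k^{\ast n}} f, \qquad f \in \mathcal{D}_{k^{\ast n}}.
\]
This is the first of the two displayed formulas in (ii) (I read the paper's "$k^{n-m}$" as shorthand for $k^{\ast(n-m)}$). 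For the second equality, $W_{k^{\ast m}} f = W_{k^{\ast n}}(k^{\ast(n-m)} \circ f)$, I would use the intertwining property $W_k(k \circ g) = g$ together with the fact that $T'_{k^{\ast n}}$ is an algebra homomorphism for the $\circ$-product, i.e. $T'_{k^{\ast n}}(g \circ h) = g \circ T'_{k^{\ast n}}(h)$: writing $g = W_{k^{\ast n}} f$ so that $T'_{k^{\ast n}} g = f$, one computes $T'_{k^{\ast n}}\bigl(k^{\ast(n-m)} \circ g\bigr) = k^{\ast(n-m)} \circ f$, hence $k^{\ast(n-m)} \circ g \in \mathcal{D}_{k^{\ast n}}$ and $W_{k^{\ast n}}(k^{\ast(n-m)} \circ f) = k^{\ast(n-m)} \circ g = k^{\ast(n-m)} \circ W_{k^{\ast n}} f$, which matches the first expression.

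For the support statement, suppose $\operatorname{supp}(W_{k^{\ast n}} f) \subset I$ with $I$ an interval in $\R^+$. The cleanest route is to invoke Lemma \ref{equivalencia}: applied with $k^{\ast n}$ in place of $k$ (valid since $0 \in \operatorname{supp}(k^{\ast n})$) and with $I = [0,a]$ it gives $\operatorname{supp}(f) \subset [0,a]$; then applying Lemma \ref{equivalencia} again with $k^{\ast m}$ in place of $k$ yields $\operatorname{supp}(W_{k^{\ast m}} f) \subset [0,a]$. To handle a general interval $I = [b,a]$ rather than one anchored at $0$, I would combine this with the translation identity (\ref{transla}), $W_k(f_u) = (W_k f)_u$: translating $f$ by $b$ moves the support of $W_{k^{\ast n}} f$ to $[0, a-b]$, apply the previous case, then translate back. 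Alternatively, and perhaps more transparently, one can argue directly from $W_{k^{\ast m}} f = k^{\ast(n-m)} \circ W_{k^{\ast n}} f$ using the support-shrinking property of the dual convolution recorded in Section 2, namely $\max\{t : t \in \operatorname{supp}(g \circ h)\} \le \max\{t : t \in \operatorname{supp}(h)\}$ for the right endpoint, together with a symmetric estimate for the left endpoint (which holds because $\operatorname{supp}(k^{\ast(n-m)}) \subset [0,\infty)$ pushes the support of $h$ only to the left under $\circ$).

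Finally, (i) is simply the inclusion $\mathcal{D}_{k^{\ast n}} \subset \mathcal{D}_{k^{\ast m}}$ already obtained above, upgraded to a continuous embedding $\hookrightarrow$; continuity follows because the inclusion is realized by the continuous operator $h \mapsto k^{\ast(n-m)} \circ h$ on $\mathcal{D}_+$ composed with the continuous maps $T'_{k^{\ast n}}$ and the identification of $\mathcal{D}_{k^{\ast n}} = T'_{k^{\ast n}}(\mathcal{D}_+)$, or more directly because convergence in $\mathcal{D}_{k^{\ast n}}$ is by definition convergence of the $W_{k^{\ast n}}$-preimages in $\mathcal{D}_+$ and $k^{\ast(n-m)} \circ (\cdot)$ is $\tau$-continuous. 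The main obstacle I anticipate is purely bookkeeping: keeping straight which convolution power plays the role of "$k$" and which plays "$l$" in (\ref{cop}) and (\ref{c009}), and verifying the $0 \in \operatorname{supp}$ hypotheses at each step so that all the cited results from \cite{KLM} actually apply; the support assertion for a non-origin-anchored interval $I$ is the only place requiring a genuine (if small) extra idea, namely the reduction via translation to the Lemma \ref{equivalencia} case.
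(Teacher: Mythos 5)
Your overall route is exactly the paper's: the paper offers no separate proof of this lemma, presenting it simply as ``a consequence of (\ref{cop})'', and your instantiation of (\ref{cop}) with $k^{\ast m}$ in the role of $k$ and $k^{\ast(n-m)}$ in the role of $l$ (using that $0\in\mathrm{supp}(k)$ forces $0\in\mathrm{supp}(k^{\ast j})$, as recorded just before (\ref{cop})), together with the $\circ$-homomorphism property of $T'_{k^{\ast n}}$ for the second equality, correctly yields part (i) and both identities in part (ii).

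The one point to be careful about is the support assertion for an interval $I$ not anchored at the origin. Your argument via Lemma \ref{equivalencia} settles $I=[0,a]$, which is the only case the paper ever uses (in the proof of Theorem \ref{main2}). But neither of your proposed extensions to $I=[b,a]$ with $b>0$ works: the translation identity (\ref{transla}) shifts only to the left, so it controls the right endpoint of $\mathrm{supp}(W_{k^{\ast m}}f)$ and says nothing about points $s<b$; and your parenthetical remark that $\circ$ ``pushes the support only to the left'' is precisely the obstruction rather than the cure, since it permits $\mathrm{supp}\bigl(k^{\ast(n-m)}\circ h\bigr)$ to protrude to the left of $\mathrm{supp}(h)$. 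In fact the general-interval statement is false: for $k=\chi_{(0,1)}$ one has $\mathcal{D}_{k^{\ast 2}}=\mathcal{D}_{+}$, and choosing $h\in\mathcal{D}_{+}$ supported in $[2,3]$ and $f=k^{\ast 2}\circ h$ gives $W_{k^{\ast 2}}f=h$ supported in $[2,3]$, while $W_{k}f(t)=\int_t^{t+1}h(s)\,ds$ is supported in $[1,3]$. So the lemma should be read with $I=[0,a]$ (or with only the upper endpoint claimed); with that reading your proof is complete and matches the paper's intent.
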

The next definition gives the test function space will be used later to obtain new distribution spaces and corresponding distribution semigroups.
\begin{definition}\label{definf} Take $k\in L^{1}_{loc}(\mathbb{R}^{+})$ such that  $0\in {supp}(k)$. We denote by $\mathcal{D}_{k^{\ast \infty}}$ the  space defined by
$$
\mathcal{D}_{k^{\ast \infty}}:=\bigcap_{n=1}^\infty \mathcal{D}_{k^{\ast n}}.
$$
\end{definition}
It is clear that   $\mathcal{D}_{k^{\ast \infty}}$ is also a topological algebra (equipped with the $\tau$-topology) and $\mathcal{D}_{k^{\ast \infty}}\hookrightarrow \mathcal{D}_{k^{\ast n}}\hookrightarrow  \mathcal{D}_+.$ In fact, $\mathcal{D}_{k^{\ast \infty}}$ is the inverse (or projective) limit  of the family $(\mathcal{D}_{k^{\ast n}})_{n\ge 1}$. By Lemma \ref{infin}, $ W_{k^{\ast n}}:\mathcal{D}_{k^{\ast \infty}}\to \mathcal{D}_{k^{\ast \infty}}$  and  $k^{\ast n}\circ W_{k^{\ast n}}f=f $ for $f\in \mathcal{D}_{k^{\ast \infty}}$ and $n\in \N$. Note that if $f\in \mathcal{D}_{k^{\ast \infty}}$  then $f_u\in \mathcal{D}_{k^{\ast \infty}}$ for $u\ge 0$, see formula (\ref{transla}).

\medskip
\begin{example} {\rm In the case that  $\mathcal{D}_{k}= \mathcal{D}_{+}$, then  $\mathcal{D}_{k^{\ast n}}= \mathcal{D}_{+}$ for $n\in \N$ and $\mathcal{D}_{k^{\ast\infty}}= \mathcal{D}_+$. Take $z\in \C$, $e_{z}(t):=e^{zt}$ for  $t\ge 0$,
 Then  ${\mathcal
D}_{ke_z}=\{e_{z}f\,\,\vert \, f\in\Dk\}=\Dk$ and
$$W_{ke_z}f=e_zW_k(e_{-z}f), \qquad f\in \Dk;
$$
in the case that $\mathcal{D}_{k}= \mathcal{D}_{+}$, then ${\mathcal
D}_{ke_z}=\mathcal{D}_{+}$, see \cite[Proposition 2.9]{KLM}.

  (i) Recall that $\alpha >0$ and $ j_\alpha(t):={t^{\alpha-1}\over \Gamma(\alpha)}$;  the map $W_{j_\alpha}$ is the Weyl fractional derivative of order $\alpha$, $W_\alpha$ and ${\mathcal D}_{j_\alpha}={\mathcal D}_{j_\alpha^{\ast\infty}}={\mathcal D}_+$;  note that for $\alpha \in \N$,  $W_\alpha= (-1)^\alpha{d^\alpha \over dt^\alpha}$, the $\alpha$-iterate of usual derivation,  see more details for example in \cite{SKM}.

 (ii) Given $\alpha>0$ and $z\in \C$,  we have that ${\mathcal D}_{e_zj_\alpha}={\mathcal D}_{(e_zj_\alpha)^{\ast\infty}}={\mathcal D}_+$ and
 $$
 W_{e_zj_\alpha}f= e_zW_\alpha(e_{-z}f), \qquad f \in{\mathcal D}_+;
 $$
for $\alpha=1, 2$  see  explicit expressions in  \cite[Section 2]{KLM}.

 (iii) It is straightforward to check that $T'_{\chi_{(0,1)}}(f)(t)=\int_t^{t+1}f(s)ds$ for $f\in {\mathcal D}_+$,  ${\mathcal D}_{\chi_{(0,1)}}={\mathcal D}_{\chi_{(0,1)^{\ast n}}}={\mathcal D}_+$  and
$$
W_{\chi_{(0,1)}}f(t)=-\sum_{n=0}^{\infty} f'(t+n), \qquad f \in {\mathcal D}_+,\quad t\geq0.
$$

}
\end{example}
\medskip

Now let $f,g\in\mathcal{D}_{k}$. Then $f\ast g \in \mathcal{D}_{k}$ and

\medskip
\begin{eqnarray}\label{convo}
W_{k}(f\ast g)(s)=\displaystyle\int_{0}^{s}W_{k}g(r)\int_{s-r}^{s}k(t+r-s)W_{k}f(t)dtdr\\
-  \displaystyle\int_{s}^{\infty}W_{k}g(r)\int_{s}^{\infty}k(t+r-s)W_{k}f(t)dtdr\nonumber,
\end{eqnarray}
see \cite[Theorem 2.10]{KLM}.

Under some conditions on the function  $k$,  some Banach algebras under the convolution product may be considered as the next theorem shows.

\begin{theorem} \label{teoalgebras} (\cite[Theorems 3.4 and 3.5]{KLM}) Let $k\in L^1_{loc }(\R^+)$  be  a function with $0\in\textrm{supp}(k)$ and $\hbox{abs}(\vert k\vert)<\infty$.
 Then the formula
$$\|f\|_{k, e_\beta}:=\int_{0}^{\infty}|W_{k}f(t)|e^{\beta t}dt,\qquad f\in\mathcal{D}_{k},$$
for $\beta>\max\{\hbox{abs}(\vert k\vert), 0\}$ defines an algebra norm on $\mathcal{D}_{k}$ for the convolution product $\ast$. We denote by $\mathcal{T}_{k}(e_\beta)$ the Banach space obtained as the completion of $\mathcal{D}_{k}$ in the norm $\|\cdot\|_{k,e_\beta}$, and then we have $\mathcal{T}_{k}(e_\beta) \hookrightarrow L^{1}(\mathbb{R}^{+})$.
\end{theorem}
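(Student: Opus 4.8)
The plan is to verify that $\|\cdot\|_{k,e_\beta}$ is a well-defined norm on $\mathcal{D}_k$ which is submultiplicative for $\ast$, and then to identify the completion with a subspace of $L^1(\R^+)$. First I would check well-definedness and finiteness: for $f\in\mathcal{D}_k$, the function $W_kf$ lies in $\mathcal{D}_+$ and by Lemma~\ref{equivalencia} it has compact support (since $f$ does), so $\int_0^\infty |W_kf(t)|e^{\beta t}\,dt<\infty$ for every $\beta$. The only nontrivial norm axiom is definiteness: if $\|f\|_{k,e_\beta}=0$ then $W_kf=0$ a.e., and since $f=T'_k(W_kf)=k\circ W_kf$ we get $f=0$. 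Homogeneity and the triangle inequality are immediate from the corresponding properties of the $L^1(e^\beta\,dt)$-norm together with the linearity of $W_k$ (note $W_k$ is linear on its domain $\mathcal{D}_k$).

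The heart of the argument is submultiplicativity, $\|f\ast g\|_{k,e_\beta}\le \|f\|_{k,e_\beta}\,\|g\|_{k,e_\beta}$. Here I would invoke the explicit formula~(\ref{convo}) for $W_k(f\ast g)$. Taking absolute values and using $\beta>\max\{\mathrm{abs}(|k|),0\}$ so that $\int_0^\infty |k(u)|e^{\beta u}\,du<\infty$, I would estimate the two double integrals in~(\ref{convo}) separately. For the first term, after multiplying by $e^{\beta s}$, integrating in $s$, and inserting $e^{\beta s}=e^{\beta(s-r-t+t+r-s+s)}$ appropriately --- more precisely writing $e^{\beta s}\le e^{\beta r}e^{\beta(t+r-s)}\cdot(\text{correction})$ on the region $0\le s-r\le t$, where $t+r-s\ge 0$ --- one arranges a change of variables $(s,r,t)\mapsto$ (new variables) turning the triple integral into a product of $\int|W_kg(r)|e^{\beta r}dr$, $\int|W_kf(t)|e^{\beta t}dt$, and a bounded factor involving $\int_0^\infty|k(u)|e^{\beta u}du$; the same is done for the second term with the region $s\le t$, $s\le r$. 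Summing and using the triangle inequality gives the bound, possibly with a constant that is absorbed by Theorem~\ref{teoalgebras}'s normalization (if a genuine constant appears one replaces $\|\cdot\|_{k,e_\beta}$ by an equivalent rescaled norm, but the cited result asserts the clean inequality, so the Fubini/change-of-variables bookkeeping must be done so that the constant is exactly $1$). This combinatorial estimate on the two integrals in~(\ref{convo}) is the main obstacle.

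Once the norm and submultiplicativity are established, $\mathcal{D}_k$ is a normed algebra under $\ast$; its completion $\mathcal{T}_k(e_\beta)$ is by construction a Banach algebra (continuity of $\ast$ extends to the completion by density and the submultiplicative bound). Finally I would exhibit the embedding $\mathcal{T}_k(e_\beta)\hookrightarrow L^1(\R^+)$: on $\mathcal{D}_k$ the map $f\mapsto f$ satisfies
$$
\|f\|_{L^1(\R^+)}=\int_0^\infty\Bigl|\int_t^\infty k(s-t)W_kf(s)\,ds\Bigr|\,dt
\le \int_0^\infty\!\!\int_t^\infty |k(s-t)|\,|W_kf(s)|\,ds\,dt,
$$
and interchanging the order of integration this is $\int_0^\infty |W_kf(s)|\bigl(\int_0^s|k(u)|\,du\bigr)\,ds\le C_\beta\int_0^\infty|W_kf(s)|e^{\beta s}\,ds=C_\beta\|f\|_{k,e_\beta}$, where $C_\beta=\sup_{s\ge0}e^{-\beta s}\int_0^s|k(u)|\,du<\infty$ because $\beta>\mathrm{abs}(|k|)$. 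Thus the identity map $\mathcal{D}_k\to L^1(\R^+)$ is bounded for $\|\cdot\|_{k,e_\beta}$, hence extends to a bounded (and, by a separate injectivity check using that $\mathcal{D}_k$ is dense and $W_k$ is closed, injective) map $\mathcal{T}_k(e_\beta)\to L^1(\R^+)$, giving the claimed continuous inclusion.
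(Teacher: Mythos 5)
First, note that the paper does not prove this statement at all: it is quoted verbatim from \cite[Theorems 3.4 and 3.5]{KLM}, so there is no internal proof to compare against. Judged on its own, your skeleton is the right one (finiteness and definiteness via $f=k\circ W_kf$, submultiplicativity via formula (\ref{convo}), the $L^1$-embedding via Fubini with $\int_0^s|k(u)|du\le e^{\beta s}\widehat{|k|}(\beta)$), and the peripheral steps are essentially correct. One small repair: the injectivity of the extended map $\mathcal{T}_k(e_\beta)\to L^1(\R^+)$ is not a consequence of ``$W_k$ closed''; the clean argument is that $\mathcal{T}_k(e_\beta)$ is isometric to the closure of $\mathcal{D}_+$ in $L^1(e^{\beta t}dt)$ via $F\mapsto k\circ F$, and $F\mapsto k\circ F$ is bounded from $L^1(e^{\beta t}dt)$ to $L^1(\R^+)$ and injective by the Titchmarsh--Foia\c{s} theorem (this is where $0\in{supp}(k)$ enters).

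The genuine gap is exactly the step you flag as ``the main obstacle'': the submultiplicativity constant. Carrying out your change of variables in the first integral of (\ref{convo}) (set $u=t+r-s\in[0,r]$, so $e^{\beta s}=e^{\beta t}e^{\beta r}e^{-\beta u}$) and likewise in the second gives $\|f\ast g\|_{k,e_\beta}\le C\,\|f\|_{k,e_\beta}\|g\|_{k,e_\beta}$ with $C=2\widehat{|k|}(\beta)$ (reducible to $\widehat{|k|}(\beta)$ using the cancellation of the two indicator regions), and no amount of bookkeeping will force $C=1$: it is false in general. Take $k\equiv 1$, so $W_k=-d/dt$, $\hbox{abs}(|k|)=0$, and choose $\beta=1/2$ and $f=g$ with $W_kf$ a bump approximating $\delta_a$; then $\|f\|_{k,e_\beta}^2\approx e^{a}$ while $\|f\ast f\|_{k,e_\beta}\approx 2(e^{a}-1)$, which exceeds $e^a$ for large $a$. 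So the clean inequality $\|f\ast g\|\le\|f\|\,\|g\|$ cannot be proved for all $\beta>\max\{\hbox{abs}(|k|),0\}$; the statement must be read (as \cite{KLM} and the applications in Section 6 use it) as asserting continuity of the product, i.e.\ $\|f\ast g\|\le C(k,\beta)\|f\|\|g\|$, equivalently an equivalent submultiplicative norm after rescaling by $C$. Your write-up leaves this unresolved rather than wrong, but since it is the heart of the theorem, the proof is incomplete as it stands: you should either carry out the estimate with the explicit constant $2\widehat{|k|}(\beta)$ and state the conclusion in that form, or add the rescaling remark as part of the proof rather than as a parenthetical doubt.
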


\medskip

Note that in the three examples below, the space ${\mathcal D}_k = {\mathcal D}_+$. However, as the following result shows, there are functions $k$ such that ${\mathcal D}_k \varsubsetneq  {\mathcal D}_+$ and ${\mathcal D}_{k^{\ast\infty}}=\{0\}$.

\begin{theorem} \label{estricto} Take $k\in L^{1}_{loc}(\mathbb{R}^{+})$ such that $0\in\textrm{supp}(k)$ and  $0\le \hbox{abs}(\vert k\vert)<\infty$. If $\widehat k(\lambda_0)=0$ for some $\Re\lambda_0 >\hbox{abs}(\vert k\vert)$ then ${\mathcal D}_k \varsubsetneq  {\mathcal D}_+$ and ${\mathcal D}_{k^{\ast\infty}}=\{0\}$.
\end{theorem}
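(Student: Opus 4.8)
The plan is to convert membership in $\mathcal{D}_{k^{\ast\infty}}$ into a vanishing condition for a Laplace transform at $\lambda_0$, and then to promote that condition to $f\equiv0$ by using that $\mathcal{D}_{k^{\ast\infty}}$ is stable under the translations $f\mapsto f_u$; the strict inclusion $\mathcal{D}_k\varsubsetneq\mathcal{D}_+$ should drop out of the same mechanism applied one step at a time. The input is Lemma \ref{almo}: since $\Re\lambda_0>\hbox{abs}(|k|)\ge\hbox{abs}(|k^{\ast n}|)$, the factorization $\widehat{k^{\ast n}}(\lambda_0)=\widehat k(\lambda_0)^{n}$ is valid and gives $\widehat{k^{\ast n}}(\lambda_0)=0$, i.e. $k^{\ast n}\circ e_{-\lambda_0}=0$, for every $n\ge1$.

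I would first fix $\beta$ with $\hbox{abs}(|k|)<\beta<\Re\lambda_0$ (possible because $\hbox{abs}(|k|)\ge0$), so that $c_\beta:=\int_0^\infty|k(u)|e^{-\beta u}\,du<\infty$ and, by Theorem \ref{teoalgebras}, $\|g\|_{k^{\ast n},e_\beta}=\int_0^\infty|W_{k^{\ast n}}g(s)|e^{\beta s}\,ds$ is an algebra norm on $\mathcal{D}_{k^{\ast n}}$. Put $R_n(s):=\int_s^{\infty}e^{-\lambda_0 u}k^{\ast n}(u)\,du$, which converges absolutely with $|R_n(s)|\le c_\beta^{\,n}e^{-(\Re\lambda_0-\beta)s}$ and $R_n(0)=\widehat{k^{\ast n}}(\lambda_0)=0$. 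For $f\in\mathcal{D}_{k^{\ast n}}$, writing $\psi_n=W_{k^{\ast n}}f$ and using $f=k^{\ast n}\circ\psi_n$, Fubini, and $\int_0^s e^{-\lambda_0 u}k^{\ast n}(u)\,du=\widehat{k^{\ast n}}(\lambda_0)-R_n(s)=-R_n(s)$, one obtains
\[
\widehat f(-\lambda_0)=\int_0^\infty f(t)e^{\lambda_0 t}\,dt=-\int_0^\infty\psi_n(s)\,e^{\lambda_0 s}\,R_n(s)\,ds,
\]
and therefore $|\widehat f(-\lambda_0)|\le c_\beta^{\,n}\|f\|_{k^{\ast n},e_\beta}$. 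Applying the $n=1$ version to $f_u$ and using $\|f_u\|_{k,e_\beta}\le e^{-\beta u}\|f\|_{k,e_\beta}$ gives $|\widehat{f_u}(-\lambda_0)|\le c_\beta e^{-\beta u}\|f\|_{k,e_\beta}$, so $f\mapsto\widehat f(-\lambda_0)$ is $\|\cdot\|_{k,e_\beta}$–continuous on each $\mathcal{D}_{k^{\ast n}}$.

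To finish the second assertion, let $f\in\mathcal{D}_{k^{\ast\infty}}$. From $W_{k^{\ast(n-1)}}f=k\circ W_{k^{\ast n}}f$ one has $\|f\|_{k^{\ast(n-1)},e_\beta}\le c_\beta\|f\|_{k^{\ast n},e_\beta}$, and the hoped–for conclusion is that the bound above then forces $\widehat f(-\lambda_0)=0$; since $f_u\in\mathcal{D}_{k^{\ast\infty}}$ for every $u\ge0$ by (\ref{transla}), the same applies to $f_u$, whence $\int_u^\infty f(s)e^{-\lambda_0 s}\,ds=0$ for all $u\ge0$ and, differentiating in $u$, $f\equiv0$. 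For the first assertion, if $\mathcal{D}_k=\mathcal{D}_+$ then $W_k$ is everywhere defined and closed on the Fréchet space $\mathcal{D}_+$, hence continuous, and the same estimate would force $\widehat\phi(-\lambda_0)=0$ for all $\phi\in\mathcal{D}_+$; choosing $\phi\ge0$, $\phi\not\equiv0$, supported in a short interval so that $\widehat\phi(-\lambda_0)=\int_0^\infty\phi(t)e^{\lambda_0 t}\,dt\neq0$ yields a contradiction. The main obstacle is exactly the quantitative step I have glossed over: a priori $\|f\|_{k^{\ast n},e_\beta}$ grows (roughly like $c_\beta^{-n}$), so $c_\beta^{\,n}\|f\|_{k^{\ast n},e_\beta}$ need not tend to $0$. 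Making the argument go through requires sharpening $|\widehat f(-\lambda_0)|\le c_\beta^{\,n}\|f\|_{k^{\ast n},e_\beta}$ by showing that, for $f\in\mathcal{D}_{k^{\ast\infty}}$, the normalized functions $W_{k^{\ast n}}f$ become increasingly concentrated in the ``direction'' $e_{-\lambda_0}$ annihilated by $k^{\ast n}\circ(\cdot)$ — equivalently, improving the inequality $\|k\circ g\|_{k,e_\beta}\le c_\beta\|g\|_{L^1(e_\beta)}$ by an amount that beats the growth of the $\|\cdot\|_{k^{\ast n},e_\beta}$–norms. That refined estimate, together with the parallel statement that $T'_k$ fails to be surjective, is where the real work lies.
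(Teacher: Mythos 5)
Your proposal does not actually close either assertion, and the places where it breaks are not minor. For $\mathcal{D}_{k^{\ast\infty}}=\{0\}$, your estimate $|\widehat f(-\lambda_0)|\le c_\beta^{\,n}\|f\|_{k^{\ast n},e_\beta}$ is correct but structurally incapable of yielding $\widehat f(-\lambda_0)=0$: since $W_{k^{\ast(n-1)}}f=k\circ W_{k^{\ast n}}f$ gives $\|f\|_{k^{\ast(n-1)},e_\beta}\le c_\beta\|f\|_{k^{\ast n},e_\beta}$, iterating shows $c_\beta^{\,n}\|f\|_{k^{\ast n},e_\beta}\ge c_\beta\|f\|_{k,e_\beta}$, a lower bound independent of $n$ that is strictly positive unless $f=0$ --- which is exactly what is to be proved. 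The ``refined concentration estimate'' you defer to is therefore not a technical polish but the entire content of the theorem, and no such estimate is supplied. For $\mathcal{D}_k\varsubsetneq\mathcal{D}_+$, the step ``$W_k$ is everywhere defined and closed, hence continuous, and the same estimate forces $\widehat\phi(-\lambda_0)=0$ for all $\phi\in\mathcal{D}_+$'' is a non sequitur: continuity of $W_k$ only upgrades $|\widehat\phi(-\lambda_0)|\le c_\beta\|\phi\|_{k,e_\beta}$ to $|\widehat\phi(-\lambda_0)|\le C\,p(\phi)$ for some continuous seminorm $p$, i.e.\ a finite upper bound, never a vanishing statement.

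The paper's argument is organized quite differently and avoids pointwise evaluation of $\widehat\phi$ altogether in the first part. There the zero of $\widehat k$ enters through Lemma \ref{almo} as a kernel direction of $T'_k=k\circ(\cdot)$: assuming $\mathcal{D}_k=\mathcal{D}_+$, the set $W_k(\mathcal{D}_k)=\mathcal{D}_+$ is dense in $L^1(e^{\beta t}dt)$, so one chooses $f_n$ with $W_kf_n\to e_{-\lambda_0}$ in that weighted norm; then $(f_n)$ converges in the Banach algebra $\mathcal{T}_k(e_\beta)\hookrightarrow L^1(\R^+)$ of Theorem \ref{teoalgebras} to a nonzero limit (its algebra norm tends to $\|e_{-\lambda_0}\|_{L^1(e^{\beta t}dt)}>0$), while $f_n=k\circ W_kf_n\to k\circ e_{-\lambda_0}=\widehat k(\lambda_0)e_{-\lambda_0}=0$ in $L^1(\R^+)$ --- a contradiction. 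For the second part the paper dispenses with weighted norms entirely: writing $f=k^{\ast n}\ast g_n$ with $g_n\in\mathcal{D}_+$, it gets $\widehat f(\lambda_0)=\bigl(\widehat k(\lambda_0)\bigr)^n\widehat{g_n}(\lambda_0)$, so the entire function $\widehat f$ ($f$ has compact support) vanishes at $\lambda_0$ to order at least $n$ for every $n$, whence $f=0$. If you want to salvage your scheme, at least the first assertion should be rebuilt around Lemma \ref{almo} and density in the completion $\mathcal{T}_k(e_\beta)$ rather than around a one-sided bound on $\widehat\phi(-\lambda_0)$.
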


\begin{proof} We suppose that ${\mathcal D}_k =  {\mathcal D}_+$ and $\widehat k(\lambda_0)=0$ for some $\Re\lambda_0 >\hbox{abs}(\vert k\vert)$. Take $\beta \in \R$  such that $\hbox{abs}(\vert k\vert)<\beta<\Re \lambda_0$,
There exists $f_n\subset \mathcal{D}_+$ such that
 \begin{equation}\label{ecc}\int_{0}^{\infty}|W_{k}f_n(t)-e_{-\lambda_0}( t)|e^{\beta t}dt\to 0 , \qquad n\to \infty,\end{equation}
As a consequence of  Theorem \ref{teoalgebras}, we obtain that $f_n \not \to 0$ in $L^1(\R^+)$. On the other hand, by \cite[Theorem 2.5 (ii)]{KLM} and (\ref{ecc}), we get that
 $$\int_{0}^{\infty}|f_n(t)-k\circ e_{-\lambda_0}(t)|e^{\beta t}dt\to 0 , \qquad n\to \infty.$$
By Lemma \ref{almo}  $k\circ e_{-\lambda_0 }=0 $, and then  $f_n\to 0 $ in $L^1(\R^+)$. We  conclude that ${\mathcal D}_k \varsubsetneq  {\mathcal D}_+$.

Now take $f\in {\mathcal D}_{k^{\ast\infty}}$. Then there exists a sequence $(g_n)\subset {\mathcal D}_+$, such that
$f= k^{\ast n}\ast g_n$ for $n\ge 1$. Then $\widehat{f}( \lambda_0)= \left(\widehat{k}(\lambda_0)\right)^n \widehat{g_n}(\lambda_0)=0$ for any $n\ge 1$. We conclude that $\widehat{f}$ has a zero on $\lambda_0$ of order  $n$  at least for any $n\ge 1$. We conclude that $f=0$.
\end{proof}

\begin{example} \label{baumer} {\rm The following example was presented in \cite[Section 5]{[Ba]} and appeared later in other references in connection to convoluted semigroups (see \cite[Example 6.1]{[KP]} and  \cite[Example 2.8.1]{ko}). Let
$$
K(\lambda):={1\over \lambda^2}\prod_{n=0}^\infty{n^2-\lambda\over n^2+\lambda}, \qquad \Re \lambda >0.
$$
Then there exists  a continuous and exponentially bounded function ${\kappa}$ in $[0,\infty)$ such that $\displaystyle\widehat \kappa= 1/{{K}}$.  Moreover, $0\in\textrm{supp}(\kappa)$ and we apply Theorem \ref{estricto} to conclude that ${\mathcal D}_{\kappa} \varsubsetneq  {\mathcal D}_+$.}

\end{example}
We note that for the cases $k(t)=j_\alpha(t), \, t>0$ (corresponding to local integrated semigroups) and $\widehat{k}(\lambda)=\displaystyle
 = \prod_{j=1}^{\infty}\left(1+{lz\over j^{1\over a}}\right)^{-1}
   $ (where $l>0,$ $0<a<1$ and  which are considered in \cite{C}) and will be presented in Section 6), we have $\widehat k(\lambda)\ne 0$ for all $\Re\lambda >\hbox{abs}(\vert k\vert).$

\section{Local convoluted semigroups}
\setcounter{theorem}{0}

The  definition of global $k$-convoluted semigroups was introduced by the
first time by I. Cioranescu \cite{C} and subsequently developed in \cite{CL} (see also \cite{[KMV]} and the monographs \cite{ko} and \cite{[MF]}). We will consider the following definition of local $k$-convoluted semigroup as appears in \cite[Definition 2.1]{[KP]}.

\begin{definition}Let $0<\tau \le \infty$, $ k\in L^1_{loc}([0,\tau))$ and $A$ be a closed operator.
$(S_k(t))_{t\in [0,\tau)}\subset \B(X)$ a strongly continuous
 operator family. The family $(S_k(t))_{t\in [0,\tau)}$ is a { local $k$-convoluted semigroup}
(or { local $k$-semigroup} in short)
 generated by $A$  if  $S_k(t)A\subset AS_k(t),$  $\int_0^tS_k(s)xds\in D(A)$ for
${t\in [0,\tau)}$ and $x\in X$ and
\begin{equation}\label{integral}
A\int_0^tS_k(t)xdt=S_k(t)x-\int_0^t k(s)dsx, \quad x\in X,
\end{equation}
for $t\in [0,\tau)$; in this case the operator $A$ is called the {
generator} of $(S_k(t))_{{t\in [0,\tau)}}$. We say that $(S_k(t))_{{t\in [0,\tau)}}$
is
{\sl non degenerate} if $S(t)x=0$ for all $ 0\le t<\tau$ implies $x=0$.
\end{definition}

Alternatively, in relation to Problem (\ref{convsem-0}), we note that when the problem is well posed in the sense that for every $x\in X,$
there exists a unique solution $\displaystyle v\in C^1([0,\, \tau), X)\cap C([0,\, \tau), D(A)),$ we set $S(t)x=v^\prime(t),\, 0\le t<\tau,\, x\in X.$ It follows from the Closed Graph Theorem that $S(t)\in \mathcal{B}(X),\, 0\le t<\tau.$ Clearly, $t\mapsto S(t)$ is strongly continuous from $[0,\,\tau)$ to $\mathcal{B}(X).$ The local convoluted semigroup defined in this manner is necessarily non degenerate, due to the uniqueness assumption.

It is easy to prove that if  $A$  generates a $k$-convoluted semigroup $(S_k(t))_{t\in [0,\tau)}$,
then $S_k(0)=0$ and $S_k(t)x\in \overline{D(A)}$ for ${t\in [0,\tau)}$ and
$x\in X$. See more details, for example in \cite{[KMV]} and  \cite{[KP]}.

\begin{remark}{\rm (i) For $\alpha>0$ and  $k(t)={t^{\alpha-1}\over \Gamma(\alpha)}$ for $t>0$, we  get $\alpha$-times integrated semigroups which were introduced  in \cite{[Hi]}. We follow the usual notation
$(S_\alpha(t))_{t\in [0,\tau)}$  for $\alpha$-times local integrated semigroups.

\noindent (ii) If $C\in\mathcal{B}(X)$ is an injective operator and we set $k(t)\equiv C,\, 0\le t<\tau$ then we recover the concept of local $C-$regularized semigroups. Local $C-$regularized semigroups were first studied in \cite{[TO90]}.

\noindent (iii) One condition in the definition of local convoluted semigroup, equation (\ref{integral}) may be interpreted as a Duhamel formula for the abstract Cauchy problem. More precisely, if we are interested in the (non-homogeneous) initial value problem
\begin{eqnarray*}\label{Cpvca}
\begin{cases}
\displaystyle{  u^\prime(t)}=A u(t)  +F(t),\, 0\le t<\tau\\
u(0)=x\in X,\\
 \end{cases}
\end{eqnarray*}
where $F$ is an $X-$valued function and $\tau\in (0,\,\infty]$, and $K(.)$ takes values in $\mathcal{B}(X)$ with the additional assumptions that $K(t)K(s)=K(s)K(t), t,s\in [0, \,\tau); \, AK(t)x=K(t)Ax, \, t\in [0,\,\tau), x\in D(A),$ we can consider the regularized problem
\begin{eqnarray*}\label{Cpvca}
\begin{cases}
\displaystyle{  v^\prime(t)}=A v(t)+K(t)x  +F_K(t),\, 0\le t<\tau\\
v(0)=0,\\
 \end{cases}
\end{eqnarray*}
in which $F_K(t)=(K\ast F)(t)=\int_0^t K(t-s)F(s)ds, \, 0\le t<\tau $ More details can be found in the reference  \cite{CL}. We hall be concerned only with the situation where $K(t)=\phi(t)I$ where $I$ is the identity operator on $X.$ Spectral criteria for the generation of local convoluted semigroups involving the resolvent of the generator can be found in the references \cite{C}, \cite{CL}, \cite{[KMV]}  and \cite{ko}.

\noindent (iv) Other equivalent definitions of local convoluted semigroup, using the composition property  (see Proposition \ref{equiv}) or the Laplace transform (\cite[Theorem 3.2]{[KMV]})  show   this algebraic aspect in a straightforward way.
}
\end{remark}

The next characterization of local $k$-semigroups has the advantage to offer an
algebraic character which is crucial in the development of the theory as we will see in Theorem \ref{main2}. The proof runs parallel as in the global case presented in \cite[Proposition 2.2]{[KP]}, see also \cite[Proposition 2.1.5]{ko}.

\begin{proposition}\label{equiv} Let $0<\tau\le \infty$, $k\in
L^1_{loc}([0,\tau))$,  $A$
 a closed linear operator and $(S_k(t))_{t\in[0,\tau)}$ a non-degenerate strongly
continuous operator family.
 Then $(S_k(t))_{t\in[0,\tau)}$ is a local $k$-convoluted semigroup generated by $A$ if and only
 if $S_k(0)=0$ and
\begin{equation}\label{algeb}
S_k(t)S_k(s)x= \int_t^{t+s}k(t+s-r)S_k(r)xdr-\int_0^{s}k(t+s-r)S_k(r)xdr, \qquad x\in X,
\end{equation}
for  $0\le s,t\le t+s<\tau$.
\end{proposition}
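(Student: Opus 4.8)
The plan is to prove both implications by relating the algebraic identity (\ref{algeb}) to the defining integral equation (\ref{integral}), using the fact that both sides of (\ref{algeb}), and the quantity $A\int_0^t S_k(s)x\,ds$, can be handled after one integration. I would work throughout with the function $S_k(\cdot)x$ for a fixed $x\in X$, and exploit the closedness of $A$ to move $A$ in and out of integrals.

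First, for the forward direction, assume $(S_k(t))_{t\in[0,\tau)}$ is a local $k$-convoluted semigroup generated by $A$. Fix $s$ with $0\le s<\tau$ and set $y=S_k(s)x$. The idea is to apply $S_k(t)$ to both sides of (\ref{integral}) (with $x$ replaced suitably) and to use the commutation $S_k(t)A\subset AS_k(t)$ together with the hypothesis $\int_0^t S_k(r)z\,dr\in D(A)$. More precisely, I would start from (\ref{integral}) written for the vector $x$, apply $S_k(s)$, and also write (\ref{integral}) for a vector involving $S_k(s)x$; subtracting and using that $A$ is closed one gets an identity of the form $A\int_0^t S_k(r)S_k(s)x\,dr = S_k(t)S_k(s)x - (\text{something involving }k\text{ and }S_k)$. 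The "something" should, after carefully tracking the Duhamel convolutions, reduce to $\int_0^t k(r)dr\,S_k(s)x$ combined with $\int_0^s(\cdots)$ terms, and the right-hand side of (\ref{algeb}) should satisfy the \emph{same} integral equation. Then non-degeneracy (which kills any discrepancy annihilated by $A$ after integration, once one checks the value at $t=0$) forces (\ref{algeb}). This is exactly where Lemma \ref{lemma} / Corollary \ref{coro} enters: the combinatorial identity
$$
\int_0^{t-\tau}f(t-s)(\chi\ast g)(s)\,ds+\int_0^{\tau}g(t-s)(\chi\ast f)(s)\,ds=(g\ast(\chi\ast f))(t)-(\chi\ast g)(t-\tau)(\chi\ast f)(\tau)
$$
is precisely the scalar shadow of the manipulation needed to show the two sides of (\ref{algeb}) solve the same convoluted Cauchy problem, with $f,g$ playing the role of $k$ and $\chi\ast(\cdot)$ the role of the antiderivative $\int_0^t S_k$.

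For the converse, assume $S_k(0)=0$ and (\ref{algeb}). One must recover (\ref{integral}) and the inclusions $S_k(t)A\subset AS_k(t)$, $\int_0^t S_k(r)x\,dr\in D(A)$. The natural route is to integrate (\ref{algeb}) in one of the variables: integrating in $t$ over $[0,\sigma]$, or dividing by $t$ and letting $t\downarrow 0$ after an integration (using strong continuity and $S_k(0)=0$) should produce a relation expressing $\int_0^\sigma S_k(r)x\,dr$ in terms of $S_k$ and $k$ that, compared with the definition of the generator $A$ of a convoluted semigroup, identifies $A$ and yields (\ref{integral}). Concretely, I expect that differentiating (\ref{algeb}) in $s$ at $s=0^+$ (in the integrated/weak sense, since $S_k$ need not be differentiable) yields $S_k(t)\cdot\big(\text{generator action}\big)$, and that this is how one reads off that $\int_0^t S_k(r)x\,dr\in D(A)$ with the stated value. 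The commutation $S_k(t)A\subset AS_k(t)$ then follows from (\ref{algeb}) by applying it to $x\in D(A)$ and using closedness of $A$.

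The main obstacle, in both directions, is the bookkeeping of the Duhamel convolution terms: equation (\ref{algeb}) has the asymmetric shape $\int_t^{t+s}-\int_0^s$ rather than a clean $S_k(t)S_k(s)=\int_0^? k(\cdot)S_k(\cdot)$, and matching this against (\ref{integral}) requires exactly the change-of-variable identities collected in Section 2 (Lemma \ref{lemma}, Corollary \ref{coro}, and the support remark for $\circ$). I would therefore organize the proof so that the operator-valued computation is reduced, at the crucial step, to citing the already-proved scalar identity, and reserve the genuine work for justifying the interchange of $A$ with the (Bochner) integrals via closedness and for handling the limit $s\downarrow 0$ in the converse. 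The parallel with \cite[Proposition 2.2]{[KP]} and \cite[Proposition 2.1.5]{ko} in the global case means the structure is known; the only new point is that everything must be done on the finite interval $[0,\tau)$ without recourse to the Laplace transform, which is why the explicit identities of Section 2 replace the usual transform argument.
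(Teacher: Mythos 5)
The paper does not actually prove Proposition \ref{equiv}: it states only that ``the proof runs parallel'' to the global case of \cite[Proposition 2.2]{[KP]} and \cite[Proposition 2.1.5]{ko}, so the comparison must be with that standard argument. Your sketch points in the right general direction, but it leaves both crucial steps unproved, and in each case the mechanism you invoke is not the one that works. In the forward direction, the difference $u(t):=S_k(t)S_k(s)x-\bigl(\int_t^{t+s}-\int_0^s\bigr)k(t+s-r)S_k(r)x\,dr$ satisfies $u(0)=0$ and $u(t)=A\int_0^tu(r)\,dr$, but non-degeneracy does not ``kill'' such a $u$: non-degeneracy only says that $S_k(\cdot)z\equiv 0$ forces $z=0$, and $u$ is not of that form. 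The step actually needed is a uniqueness lemma for the homogeneous integrated equation: one convolves $u$ with $S_k(\cdot)$, uses closedness, commutation, Fubini and (\ref{integral}) to obtain $(\chi\ast k\ast u)(t)=0$ on $[0,\tau)$, and then applies the Titchmarsh--Foia\c s convolution theorem. This is also exactly where the local setting bites (one only gets $u=0$ on $[0,\tau-\inf\mathrm{supp}(k))$ unless $0\in\mathrm{supp}(k)$ or one argues further), and your proposal never confronts it.

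In the converse direction, ``differentiating (\ref{algeb}) in $s$ at $s=0^+$'' is not available: $k$ is merely in $L^1_{loc}$ and $S_k$ merely strongly continuous, so $\frac1s\int_0^sk(t+s-r)S_k(r)x\,dr$ has no limit in general. The workable route is to \emph{integrate} (\ref{algeb}) in one variable, apply Fubini together with the scalar identities of Section 2 (Lemma \ref{lemma}, Corollary \ref{coro}), and verify that the pair $\bigl(\int_0^sS_k(r)x\,dr,\;S_k(s)x-\int_0^sk(r)\,dr\,x\bigr)$ lies in the graph of $A$. Note also that, as literally stated, the right-hand side of the equivalence never mentions $A$, so the converse can only be proved for the ``integral generator'' defined by $Ax=y$ if and only if $S_k(t)x-\int_0^tk(s)\,ds\,x=\int_0^tS_k(r)y\,dr$ for all $t\in[0,\tau)$; non-degeneracy is precisely what makes this operator single-valued, which is where that hypothesis genuinely enters --- not in the forward uniqueness step where you placed it.
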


Note that if $k\in L^1_{loc}([0,\tau))$, and we define $(k_t)_{t\in[0,\tau)}\subset L^1_{loc}([0,\tau))$ by
$$
k_t(s):= k(t-s)\chi_{[0, t]}(s), \qquad s\in [0,\tau).
 $$
 By Proposition \ref{equiv} and Corollary  \ref{jojo}, we may conclude that $(k_t)_{t\in[0,\tau)}$ is a local $k$-convoluted semigroup in $L^1_{loc}([0,\tau))$.
In this paper, we only consider  local $k$-convoluted semigroups which are non-degenerate.

The next theorem is the main result in this paper and shows how a local $k$-convoluted semigroup $(S_k(t))_{t\in [0, \tau)}$ is extended to $[0,n \tau);$ in fact we get a local $k^{\ast n}$-convoluted semigroup in $[0,n \tau)$ for $n\in \N$. Note that we improve previous results (\cite[Section 2]{CL} and \cite[Theorem 2.1.1.9]{ko}): our approach is sharper that $n$-iterations of these theorems.

\begin{theorem}\label{ext} Let $n\in \N$, $0<\tau\le \infty$, $k \in L^1_{loc}([0,(n+1)\tau))$ and $(S_k(t))_{t\in[0,\tau)}$ be a local $k$-convoluted  semigroup generated by $A$. Then the family of operators $(S_{k^{\ast (n+1)}}(t))_{t\in [0, (n+1)\kappa]}$ defined by
$$
S_{k^{\ast (n+1)}}(t)x = \int_0^tk(t-s)S_{k^{\ast n}}(s)xds, \qquad x\in X,
$$
for    $t\in [0, n\kappa]$ and
$$
 S_{k^{\ast (n+1)}}(t)x =S_{k^{\ast n}}(n\kappa)S_{k}(t-n\kappa)x +\int_0^{n\kappa}k(t-s)S_{k^{\ast n}}(s)xds+ \int_0^{t-n\kappa}k^{\ast n}(t-s)S_{k}(s)xds, $$
 for $x\in X$ and $ t\in [ n\kappa, (n+1)\kappa]$ is a local $k^{\ast (n+1)}$-semigroup generated by $A$ for any $\kappa<\tau$.  Then we conclude that $A$ generates a local $k^{\ast (n+1)}$-semigroup $(S_{k^{\ast (n+1)}}(t))_{t\in [0, (n+1)\tau)}$.
\end{theorem}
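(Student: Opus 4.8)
The plan is to proceed by induction on $n$, using the algebraic characterization of local convoluted semigroups in Proposition \ref{equiv} together with the identity in Theorem \ref{k-convo} (or equivalently Corollary \ref{jojo}, which governs the scalar kernel $k$). The base case $n=0$ is just the hypothesis that $(S_k(t))_{t\in[0,\tau)}$ is a local $k$-convoluted semigroup. For the inductive step, I would assume that $A$ generates a local $k^{\ast n}$-convoluted semigroup $(S_{k^{\ast n}}(t))_{t\in[0,n\tau)}$ and then verify, for an arbitrary $\kappa<\tau$, that the family $(S_{k^{\ast(n+1)}}(t))_{t\in[0,(n+1)\kappa]}$ defined by the two-piece formula in the statement is a local $k^{\ast(n+1)}$-convoluted semigroup generated by $A$ on $[0,(n+1)\kappa]$. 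Since $\kappa<\tau$ is arbitrary, letting $\kappa\uparrow\tau$ (and checking consistency of the definitions on overlaps, which is immediate because all pieces are built from the same $S_{k^{\ast n}}$, $S_k$ by fixed convolution formulas) yields a local $k^{\ast(n+1)}$-semigroup on $[0,(n+1)\tau)$, completing the induction.

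The verification splits into three tasks. First, \emph{strong continuity and $S_{k^{\ast(n+1)}}(0)=0$}: the first formula gives a continuous $\mathcal{B}(X)$-valued function on $[0,n\kappa]$ vanishing at $0$ (it is a convolution of the locally integrable $k$ with the strongly continuous $S_{k^{\ast n}}$), the second formula is continuous on $[n\kappa,(n+1)\kappa]$, and the two agree at $t=n\kappa$ because there $S_k(t-n\kappa)=S_k(0)=0$ and the extra term $\int_0^0 k^{\ast n}(\cdot)S_k(\cdot)=0$. Second, \emph{commutation with $A$ and the Duhamel identity} $A\int_0^t S_{k^{\ast(n+1)}}(s)x\,ds = S_{k^{\ast(n+1)}}(t)x - j_1\ast k^{\ast(n+1)}(t)\,x$: here one uses that $A$ commutes with $S_{k^{\ast n}}$ and $S_k$ and is closed, so it passes through all the convolution integrals; on $[0,n\kappa]$ the identity reduces to the known $k^{\ast n}$-identity convolved with $k$, while on $[n\kappa,(n+1)\kappa]$ one differentiates/integrates the three-term formula and reassembles using $A\int_0^t S_k(s)x\,ds = S_k(t)x - (j_1\ast k)(t)x$ and the corresponding $k^{\ast n}$-identity. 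Third, and this is the crux, \emph{the composition law} \eqref{algeb} for $k^{\ast(n+1)}$, namely
$$
S_{k^{\ast(n+1)}}(t)S_{k^{\ast(n+1)}}(s)x = \int_t^{t+s}k^{\ast(n+1)}(t+s-r)S_{k^{\ast(n+1)}}(r)x\,dr - \int_0^{s}k^{\ast(n+1)}(t+s-r)S_{k^{\ast(n+1)}}(r)x\,dr
$$
for $0\le s,t\le t+s\le(n+1)\kappa$.

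The main obstacle is precisely this composition identity, because it must be checked across the regime where $t$, $s$, or $t+s$ crosses the threshold $n\kappa$, so the two-piece definition forces a case analysis ($t+s\le n\kappa$; $t\le n\kappa\le t+s$; $n\kappa\le t$, etc.). The strategy I would use is to substitute the defining formulas, expand both sides as iterated integrals in the scalar variables, and reduce everything to two inputs: the $k^{\ast n}$-composition law (inductive hypothesis) and the $k$-composition law (base hypothesis), patched together by the scalar convolution identity of Lemma \ref{lemma} / Corollary \ref{jojo} applied to $f=k^{\ast m}$, $g=k^{\ast m'}$ — this is exactly the point of having isolated those identities in Section 2, and the term $S_{k^{\ast n}}(n\kappa)S_k(t-n\kappa)x$ in the definition is the ``boundary'' contribution that makes the bookkeeping close. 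Once \eqref{algeb} and the Duhamel identity hold with $S_{k^{\ast(n+1)}}(0)=0$ and strong continuity, Proposition \ref{equiv} gives that $(S_{k^{\ast(n+1)}}(t))_{t\in[0,(n+1)\kappa]}$ is a local $k^{\ast(n+1)}$-semigroup generated by $A$; non-degeneracy is inherited since $k^{\ast(n+1)}$ still has $0$ in its support (as $0\in\operatorname{supp}(k)$), or directly from the Duhamel identity. Taking the union over $\kappa<\tau$ finishes the proof.
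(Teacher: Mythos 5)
Your plan would get there, but it follows a genuinely different route from the paper and, more importantly, it mislocates the difficulty. The paper never invokes the composition law (\ref{algeb}) or Proposition \ref{equiv}: it verifies the original definition of generator directly on $[n\kappa,(n+1)\kappa]$ --- the commutation $S_{k^{\ast(n+1)}}(t)A\subset AS_{k^{\ast(n+1)}}(t)$, the membership $\int_0^t S_{k^{\ast(n+1)}}(r)x\,dr\in D(A)$ (which itself takes a nontrivial Fubini argument, not just ``closedness of $A$ passes through the integrals''), and identity (\ref{integral}) --- while quoting the known fact that $k\ast S_{k^{\ast n}}$ is a local $k^{\ast(n+1)}$-semigroup on $[0,n\kappa]$. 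All of the paper's computational effort goes into what you call your ``second task'': splitting $\int_{n\kappa}^t S_{k^{\ast(n+1)}}(s)x\,ds$ into three summands, applying $A$ to each via the $k$- and $k^{\ast n}$-Duhamel identities, and reassembling the leftover scalar terms with Lemma \ref{lemma} (applied with $f=k$, $g=k^{\ast n}$ and $\tau$ replaced by $t-n\kappa$) to produce $-\int_0^t k^{\ast(n+1)}(s)\,ds$. You dismiss this step as routine and instead declare the two-variable composition law ``the crux''; but once (\ref{integral}) together with the commutation and domain conditions is established, the family is by definition a local $k^{\ast(n+1)}$-semigroup generated by $A$, and (\ref{algeb}) then follows for free from Proposition \ref{equiv} --- so your third task, with its three-regime case analysis in $(s,t)$, is redundant. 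Conversely, if you tried to lean on (\ref{algeb}) alone, Proposition \ref{equiv} only yields that the family is a convoluted semigroup for \emph{its own} generator (the identity (\ref{algeb}) does not mention $A$), so identifying that generator with $A$ would force you back to the Duhamel identity anyway. In short: the architecture is sound, but the proof lives entirely in the step you wave at, and the step you flag as hard can be skipped.
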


\begin{proof} Note that the family of operators $(S_{k^{\ast (n+1)}}(t))_{t\in [0, (n+1)\kappa]}$ is strongly continuous. It is known that  $(S_{k^{\ast (n+1)}}(t))_{t\in[0,n\kappa]}$ is a local $k^{\ast (n+1)}$-semigroup generated by $A$, see for example \cite[Proposition 2.1.3]{ko} and \cite[Proposition 5.2]{KLM}. Now take    $t\in [n\kappa, (n+1)\kappa]$ and $x\in X$.  It is clear that  $S_{k^{\ast (n+1)}}(t)A\subset AS_{k^{\ast (n+1)}}(t)$ and we  show that
$
\int_0^t S_{k^{\ast (n+1)}}(r)xdr\in D(A).$ Since
$$
\int_0^t S_{k^{\ast (n+1)}}(r)xdr=\int_0^{n\kappa} S_{k^{\ast (n+1)}}(r)xdr+ \int_{n\kappa}^t S_{k^{\ast (n+1)}}(r)xdr,
$$
we check that  $\int_{n\kappa}^t S_{k^{\ast (n+1)}}(r)xdr\in D(A)$, i.e,
\begin{eqnarray*}
&\quad&\int_{n\kappa}^tS_{k^{\ast n}}(n\kappa)S_{k}(r-n\kappa)x dr\cr
&\quad&\qquad \qquad+\int_{n\kappa}^t\left(\int_0^{n\kappa}k(r-s)S_{k^{\ast n}}(s)xds+ \int_0^{r-n\kappa}k^{\ast n}(r-s)S_{k}(s)xds\right)dr\in D(A).
\end{eqnarray*}
As $(S_k(t))_{t\in[0,\tau)}$ is a local $k$-convoluted  semigroup generated by $A$, we get that $$S_{k^{\ast n}}(n\kappa)\int_{n\kappa}^t S_{k}(r-n\kappa)xdr\in D(A).$$ Now we prove that $\int_{n\kappa}^t\int_0^{n\kappa}k(r-s)S_{k^{\ast n}}(s)xdsdr\in D(A)$. We apply the Fubini theorem, change the variable $u=r-s$, to get that
\begin{eqnarray*}
&\,&\int_{n\kappa}^t\int_0^{n\kappa}k(r-s)S_{k^{\ast n}}(s)xdsdr= \int_0^{n\kappa}S_{k^{\ast n}}(s)x\int_{n\kappa}^tk(r-s)drds\cr
&\,&\int_0^{n\kappa}S_{k^{\ast n}}(s)x\int_{n\kappa-s}^{t-s}k(u)duds=\int_{0}^{t}k(u)\int_{\max\{n\kappa-u, 0\}}^{\min\{t-u, n\kappa\}}S_{k^{\ast n}}(s)xdsdu\in D(A).\cr
\end{eqnarray*}
In a similar way, it is shown that
$$
\int_{n\kappa}^t\int_0^{r-n\kappa}k^{\ast n}(r-s)S_{k}(s)xdsdr=\int_{n\kappa}^tk^{\ast n}(u)\int_0^{t-u}S_{k}(s)xdsdu \in D(A).
$$

To finish the proof we prove the equality (\ref{integral}) for $t\in [n\kappa, (n+1)\kappa]$ and $x\in X$. Note that
\begin{eqnarray*}
A\int_0^tS_{k^{\ast (n+1)}}(s)xds&=&S_{k^{\ast (n+1)}}(n\kappa)x-\int_0^{n\kappa} {k^{\ast (n+1)}}(s)dsx+ A\int_{n\kappa}^tS_{k^{\ast (n+1)}}(s)xds.\cr
\end{eqnarray*}
We apply  the Fubini theorem  to get that
\begin{eqnarray} \label{split}
\int_{n\kappa}^tS_{k^{\ast (n+1)}}(s)xds&=& S_{k^{\ast n}}(n\kappa)\int_0^{t-n\kappa}S_{k}(u)xdu+\int_0^{n\kappa}S_{k^{\ast n}}(r)x\int_{n\kappa}^t
k(s-r)dsdr\cr &\quad&\qquad
+\int_0^{t-n\kappa}S_{k}(r)x\int_{r+n\kappa}^t
k^{\ast n}(s-r)dsdr.
\end{eqnarray}
We apply the operator $A$ to the first summand to get that
$$
 S_{k^{\ast n}}(n\kappa)A\int_0^{t-n\kappa}S_{k}(u)xdu= S_{k^{\ast n}}(n\kappa)S_{k}(t-n\kappa)x-S_{k^{\ast n}}(n\kappa)x\int_0^{t-n\kappa}k(u)du.
$$

In the second summand of (\ref{split}) we write
$\int_{n\kappa}^t
k(s-r)ds= \int_{0}^{t-r}
k(u)du- \int_0^{n\kappa-r}
k(u)du$.  We apply the operator $A$ and the Fubini theorem to obtain that
\begin{eqnarray*}
&\,&A\int_0^{n\kappa}S_{k^{\ast n}}(r)x\int_{0}^{t-r}
k(u)dudr=A\int_0^tk(u) \int_0^{\min\{n\kappa, t-u\}}S_{k^{\ast n}}(r)xdrdu\cr
&\,&= \left(S_{k^{\ast n}}(n\kappa)x-\int_0^{n\kappa}k^{\ast n}(y)dyx\right)\int_0^{t-n\kappa}k(u)du\cr
&\,&\qquad + \int_{t-n\kappa}^tk(u)\left(S_{k^{\ast n}}(t-u)x-\int_0^{t-u}k^{\ast n}(y)dyx\right)du\cr
&\,&= S_{k^{\ast n}}(n\kappa)x \int_0^{t-n\kappa}k(u)du+\int_{0}^{n\kappa}k(t-r)S_{k^{\ast n}}(r)xdr \cr
&\,&\qquad -\left(\int_0^{n\kappa}k^{\ast n}(y)dy\right)\left(\int_0^{t-n\kappa}k(u)du\right)x-\int_{0}^{n\kappa}k(t-r)\int_0^{r}k^{\ast n}(y)dydrx.
\end{eqnarray*}
Using similar ideas we also get that
\begin{eqnarray*}
A\int_0^{n\kappa}S_{k^{\ast n}}(r)x\int_0^{n\kappa-r}k(u)dudr&=&\int_0^{n\kappa}k(u)\left(S_{k^{\ast n}}(n\kappa-u)x-\int_0^{n\kappa-u}k^{\ast n}(r)drx\right)du\cr
&=&
S_{k^{\ast (n+1)}}(n\kappa)x- \int_0^{n\kappa}k^{*(n+1)}(r)drx.
\end{eqnarray*}

\noindent In the  third  summand of (\ref{split}) we write $\int_{r+n\kappa}^t
k^{\ast n}(s-r)ds= \int_{r}^t- \int_{r}^{r+n\kappa}
k^{\ast n}(s-r)ds$. We apply the operator $A$ and the Fubini theorem to obtain that
\begin{eqnarray*}
&\,&A\int_0^{t-n\kappa}S_{k}(r)x\int_{r}^t
k^{\ast n}(s-r)dsdr=A\int_{0}^{t}k^{\ast n}(u)  \int_0^{\min\{t-n\kappa,t-u\}}S_{k}(r)xdrdu\cr
&\,&= S_{k}(t-nk)x\int_0^{n\kappa}k^{\ast n}(u)du+ \int_0^{t-n\kappa}k^{\ast n}(t-r)S_k(r)xdr\cr
&\,&\qquad -\left(\int_0^{n\kappa}k^{\ast n}(u)du\right)\left(\int_0^{t-n\kappa}k(u)du\right)- \int_0^{t-n\kappa}k^{\ast n}(t-r)\int_0^rk(y)dy.
\end{eqnarray*}
and finally we get
\begin{eqnarray*}
&\,&A\int_0^{t-n\kappa}S_{k}(r)x\int_{r}^{r+n\kappa}
k^{\ast n}(s-r)dsdr\cr &\,&\qquad\qquad= S_k(t-n\kappa)x\int_0^{n\kappa}k^{\ast n}(u)du-\left(\int_0^{n\kappa}k^{\ast n}(u)du \right)\left(\int_0^{t-n\kappa}k(u)du\right).
\end{eqnarray*}
We conclude that
\begin{eqnarray*}
&\,&A\int_0^{t-n\kappa}S_{k}(r)x\int_{r+n\kappa}^t
k^{\ast n}(s-r)dsdr\cr &\,&\qquad\qquad= \int_0^{t-n\kappa}k^{\ast n}(t-r)S_k(r)xdr-\int_0^{t-n\kappa}k^{\ast n}(t-r)\int_0^rk(y)dy.
\end{eqnarray*}

To finish the proof we
join together all summands  to have that
\begin{eqnarray*}
&\,&A\int_0^tS_{k^{\ast (n+1)}}(s)xds= S_{k^{\ast n}}(n\kappa)S_{k}(t-n\kappa)x+\int_{0}^{n\kappa}k(t-r)S_{k^{\ast n}}(r)xdr\cr
&\, &\qquad+ \int_0^{t-n\kappa}k^{\ast n}(t-r)S_k(r)xdr -\left(\int_0^{n\kappa}k^{\ast n}(y)dy\right)\left(\int_0^{t-n\kappa}k(u)du\right)x\cr
&\, &\qquad-\int_{0}^{n\kappa}k(t-r)\int_0^{r}k^{\ast n}(y)dydrx-\int_0^{t-n\kappa}k^{\ast n}(t-r)\int_0^rk(y)dydr\cr
&\,&= S_{k^{\ast (n+1)}}(t)x-\int_0^tk^{\ast(n+1)}(s)xds,
\end{eqnarray*}
where  we have used the Lemma \ref{lemma}. This proves the claim.
\end{proof}

In fact, the expression of the $(S_{k^{\ast (n+1)}}(t))_{t\in [0, (n+1)\kappa]}$ is not unique as shown next result. Both proof are similar to the proof of Theorem \ref{ext} and are left to the reader.

\begin{theorem}\label{ext2} Let $n\ge 2$, $0<\tau\le \infty$, $k \in L^1_{loc}([0,n\tau))$ and $(S_k(t))_{t\in[0,\tau)}$ is a local $k$-convoluted  semigroup generated by $A$. Then the family of operators $(S_{k^{\ast n}}(t))_{t\in [0, n\kappa]}$ defined  in Theorem \ref{ext} verify that
$$
S_{k^{\ast n}}(t)x = \int_0^tk^{*(n-j)}(t-s)S_{k^{\ast j}}(s)xds, \qquad x\in X,
$$
for    $t\in [0, j\kappa]$ and
\begin{eqnarray*}
 S_{k^{\ast n}}(t)x &=&S_{k^{\ast j}}(n\kappa)S_{k^{\ast(n-j)}}(t-j\kappa)x \cr
 &\quad& \qquad +\int_0^{j\kappa}k^{\ast(n-j)}(t-s)S_{k^{\ast j}}(s)xds+ \int_0^{t-j\kappa}k^{\ast j}(t-s)S_{k^{\ast(n-j)}}(s)xds, \end{eqnarray*}
 for $x\in X$, $1\le j\le n-1$ and $ t\in [ j\kappa, n\kappa]$  and  any $\kappa<\tau$.

\end{theorem}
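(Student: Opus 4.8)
\medskip
\noindent Fix $\kappa<\tau$. The plan is not to unwind the family $(S_{k^{\ast n}}(t))_{t\in[0,n\tau)}$ produced by Theorem \ref{ext}, but rather to \emph{define} a family $(T(t))_{t\in[0,n\kappa]}$ by the two displayed formulas and to verify that it is a non-degenerate local $k^{\ast n}$-convoluted semigroup generated by $A$ on $[0,n\kappa]$. The asserted identities then follow, since a local $k^{\ast n}$-convoluted semigroup generated by a prescribed operator is unique --- this rests on the Titchmarsh convolution theorem, using that $0\in\textrm{supp}(k)$ forces $0\in\textrm{supp}(k^{\ast n})$, hence $k^{\ast n}\not\equiv 0$, cf.\ \cite{ko} --- so $T(t)=S_{k^{\ast n}}(t)$ for $0\le t\le n\kappa$, and then for all $t\in[0,n\tau)$ because $\kappa<\tau$ is arbitrary. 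Note first that $T$ is well defined: by Theorem \ref{ext}, $A$ generates local $k^{\ast m}$-convoluted semigroups on $[0,m\tau)$ for every $m\ge 1$, so $S_{k^{\ast j}}$ is available on $[0,j\tau)\supseteq[0,j\kappa]$, $S_{k^{\ast(n-j)}}$ on $[0,(n-j)\tau)$, and $0\le t-j\kappa\le(n-j)\kappa<(n-j)\tau$ whenever $t\in[j\kappa,n\kappa]$.

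\medskip
\noindent On $[0,j\kappa]$, the formula $T(t)x=\int_0^t k^{\ast(n-j)}(t-s)S_{k^{\ast j}}(s)x\,ds$ is precisely the classical procedure that turns the local $k^{\ast j}$-convoluted semigroup $(S_{k^{\ast j}}(t))$ into a local $(k^{\ast j}\ast k^{\ast(n-j)})=k^{\ast n}$-convoluted semigroup --- the step carried out at the very beginning of the proof of Theorem \ref{ext} (see \cite[Proposition 2.1.3]{ko}, \cite[Proposition 5.2]{KLM}). Thus $T$ meets all the requirements of a local $k^{\ast n}$-convoluted semigroup on $[0,j\kappa]$, and a short computation shows that the two formulas for $T$ agree at $t=j\kappa$, so $T$ is strongly continuous on $[0,n\kappa]$.

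\medskip
\noindent On $[j\kappa,n\kappa]$, one repeats the proof of Theorem \ref{ext} verbatim, with $k^{\ast(n-j)}$ and $k^{\ast j}$ now in the roles that $k$ and $k^{\ast n}$ played there (and $j\kappa$, $S_{k^{\ast j}}$, $S_{k^{\ast(n-j)}}$ in the roles of $n\kappa$, $S_{k^{\ast n}}$, $S_k$). The inclusion $T(t)A\subset AT(t)$ is immediate from the corresponding commutations for $S_{k^{\ast j}}$ and $S_{k^{\ast(n-j)}}$. For $\int_0^t T(r)x\,dr\in D(A)$ one splits $\int_0^t=\int_0^{j\kappa}+\int_{j\kappa}^t$, applies Fubini and the obvious changes of variable to the three double integrals produced by the second formula, and uses that $A\int_0^{\cdot}S_{k^{\ast j}}(r)x\,dr$ and $A\int_0^{\cdot}S_{k^{\ast(n-j)}}(r)x\,dr$ exist by (\ref{integral}) for those semigroups, exactly as in the proof of Theorem \ref{ext}. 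Finally, relation (\ref{integral}) for $k^{\ast n}$ is obtained by applying the closed operator $A$ term by term to the Fubini-rearranged expression for $\int_{j\kappa}^t T(s)x\,ds$ (the analogue of (\ref{split})) and collecting; the leftover scalar terms cancel via Lemma \ref{lemma}, applied as at the end of the proof of Theorem \ref{ext} with $f=k^{\ast(n-j)}$ and $g=k^{\ast j}$, and since $g\ast\chi\ast f=\chi\ast k^{\ast n}$ this contributes exactly the term $-\int_0^t k^{\ast n}(s)\,ds\,x$. Every single manipulation needed already appears in the proof of Theorem \ref{ext}.

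\medskip
\noindent I expect the only genuine difficulty to be organizational: transporting the lengthy computation of Theorem \ref{ext} through the substitution above while keeping the Fubini domains of integration and the numerous applications of $A$ consistent, together with recording the uniqueness statement for local $k^{\ast n}$-convoluted semigroups and the elementary range checks above. Since no new analytic ingredient enters, carrying this out for each $1\le j\le n-1$ is routine.
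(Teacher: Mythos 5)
The paper offers no proof of Theorem \ref{ext2} at all --- it declares the proof ``similar to the proof of Theorem \ref{ext} and left to the reader'' --- and your proposal is exactly that prescription: rerun the computation of Theorem \ref{ext} with $k^{\ast(n-j)}$, $k^{\ast j}$, $j\kappa$ in the roles of $k$, $k^{\ast n}$, $n\kappa$, using \cite[Proposition 2.1.3]{ko} on $[0,j\kappa]$ and Lemma \ref{lemma} to collect the scalar terms on $[j\kappa,n\kappa]$. So you are on the intended route, and I see no gap in the strategy. Two points deserve to be made explicit, both of which you handle better than the paper does. First, verifying that your family $T$ is a local $k^{\ast n}$-convoluted semigroup generated by $A$ does not by itself prove the stated identity with the family constructed recursively in Theorem \ref{ext}; the uniqueness of a non-degenerate local convoluted semigroup with a given generator is genuinely needed and is nowhere recorded in the paper. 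Your Titchmarsh justification is the right one, but the operative point is not merely $k^{\ast n}\not\equiv 0$: one gets $(\chi\ast k^{\ast n})\ast w=0$ on $[0,n\kappa]$ for the difference $w$ of the two primitives, and the local Titchmarsh theorem then kills $w$ on all of $[0,n\kappa]$ only because $\inf\textrm{supp}(k^{\ast n})=n\inf\textrm{supp}(k)=0$; note that $0\in\textrm{supp}(k)$ is not among the hypotheses of Theorem \ref{ext2}, though it is assumed everywhere the result is used. Second, your substitution silently yields $S_{k^{\ast j}}(j\kappa)$ as the first factor of the extension formula, whereas the statement reads $S_{k^{\ast j}}(n\kappa)$ --- an operator that need not be defined, since $S_{k^{\ast j}}$ lives on $[0,j\tau)$ and $n\kappa$ may exceed $j\tau$; comparison with Theorem \ref{ext} at $j=n-1$ confirms $S_{k^{\ast j}}(j\kappa)$ is what is meant, so you are (correctly) proving the repaired statement and should say so.
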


\begin{corollary}Let  $0<\tau\le \infty$, $k \in L^1_{loc}([0,2\tau))$ and $(S_k(t))_{t\in[0,\tau)}$ be a local $k$-convoluted  semigroup. Then the family of operators $(S_{k\ast k}(t))_{t\in [0, 2\kappa]}$ defined in Theorem \ref{ext} verifies that

$$
 S_{k\ast k}(t+s)x=
 S_k(t)S_k(s)x+\left( \int_0^t+ \int_0^{s} \right)k(t+s-u)S_{k}(u)xdu
$$
   for $t, s\in[0,\kappa)$ and $x\in X.$
\end{corollary}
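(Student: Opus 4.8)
The plan is to read this corollary as the case $n=1$ of Theorem~\ref{ext} and to verify it by a short direct computation, the only inputs being the two-branch formula for $(S_{k\ast k}(t))_{t\in[0,2\kappa]}$ from Theorem~\ref{ext} together with the algebraic identity (\ref{algeb}) of Proposition~\ref{equiv}. First I would observe that the right-hand side of the asserted identity is symmetric in $t$ and $s$: the integral term visibly so, and $S_k(t)S_k(s)x=S_k(s)S_k(t)x$ because subtracting the two corresponding instances of (\ref{algeb}) makes the integrals pair up and cancel. Hence I may assume $s\le t<\kappa$, and then split according to whether $t+s\le\kappa$ or $\kappa\le t+s<2\kappa$, which is exactly the split between the two branches defining $S_{k\ast k}$.

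For $t+s\le\kappa$, Theorem~\ref{ext} gives $S_{k\ast k}(t+s)x=\int_0^{t+s}k(t+s-u)S_k(u)x\,du$, while Proposition~\ref{equiv} (applicable since $t+s<\kappa<\tau$) gives $S_k(t)S_k(s)x=\int_t^{t+s}k(t+s-r)S_k(r)x\,dr-\int_0^{s}k(t+s-r)S_k(r)x\,dr$. Adding $\bigl(\int_0^t+\int_0^s\bigr)k(t+s-u)S_k(u)x\,du$ to the latter, the two copies of $\int_0^{s}$ cancel and $\int_t^{t+s}+\int_0^t$ merge into $\int_0^{t+s}$, which is precisely $S_{k\ast k}(t+s)x$.

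For $\kappa\le t+s<2\kappa$ one has $0\le t+s-\kappa\le s\le t<\kappa$, and Theorem~\ref{ext} gives
\[
S_{k\ast k}(t+s)x=S_k(\kappa)S_k(t+s-\kappa)x+\int_0^{\kappa}k(t+s-u)S_k(u)x\,du+\int_0^{t+s-\kappa}k(t+s-u)S_k(u)x\,du .
\]
Writing $m(u):=k(t+s-u)S_k(u)x$, the claim reduces to
\[
S_k(\kappa)S_k(t+s-\kappa)x-S_k(t)S_k(s)x=-\int_t^{\kappa}m(u)\,du+\int_{t+s-\kappa}^{s}m(u)\,du .
\]
I would then apply Proposition~\ref{equiv} to each of $S_k(\kappa)S_k(t+s-\kappa)x$ and $S_k(t)S_k(s)x$ and subtract: the parts $\int_\kappa^{t+s}m-\int_t^{t+s}m$ collapse to $-\int_t^{\kappa}m$ and $-\int_0^{t+s-\kappa}m+\int_0^{s}m$ collapse to $\int_{t+s-\kappa}^{s}m$, which is exactly the right-hand side above. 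If one prefers, the very same cancellation can be packaged through Lemma~\ref{lemma} with $f=g=k$.

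The computation is routine once the case split is in place; the only point that needs attention is that every argument of $S_k$ occurring above must lie in $[0,\tau)$ so that (\ref{algeb}) may legitimately be invoked, which is what forces working with $\kappa<\tau$ and with the two ranges $t+s\le\kappa$ and $\kappa\le t+s<2\kappa$ rather than reasoning directly on $[0,2\tau)$. No estimates or new constructions are involved beyond Theorem~\ref{ext} and Proposition~\ref{equiv}.
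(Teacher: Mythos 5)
The paper itself does not write out a proof of this corollary (it is declared ``similar to the proof of Theorem \ref{ext} and left to the reader''), so your argument has to stand on its own. Your reduction to the two branches of the definition of $S_{k\ast k}$ is the right starting point, and the computation in the range $t+s\le\kappa$ is correct and complete. The gap is in the second case. There you invoke the composition identity (\ref{algeb}) for the pairs $(\kappa,\,t+s-\kappa)$ and $(t,s)$; but Proposition \ref{equiv} only provides (\ref{algeb}) under the hypothesis that the \emph{sum} of the two time arguments is $<\tau$, and in your second case that sum is $t+s$, which ranges over $[\kappa,2\kappa)$ and can exceed $\tau$ whenever $\kappa>\tau/2$ (e.g.\ $\tau=1$, $\kappa=0.9$, $t=s=0.8$). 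The failure is not merely formal: the right-hand side of (\ref{algeb}) then contains $\int_t^{t+s}k(t+s-r)S_k(r)x\,dr$ and $\int_\kappa^{t+s}k(t+s-r)S_k(r)x\,dr$, which involve $S_k(r)$ for $r\ge\tau$, where the given semigroup is simply not defined. Saying that these two pieces ``collapse to $-\int_t^{\kappa}$'' is a cancellation of symbols that have no meaning individually, so the key identity of your second case is not licensed. The same objection applies to your opening symmetry argument for $S_k(t)S_k(s)x=S_k(s)S_k(t)x$ when $t+s\ge\tau$.

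The statement is true and the algebra you wrote is the right algebra; what is missing is a legitimate way to compare $S_k(\kappa)S_k(t+s-\kappa)x$ with $S_k(t)S_k(s)x$ without leaving $[0,\tau)$. The cleanest repair: assume $0<s\le t<\kappa$ and apply Theorem \ref{ext} (case $n=1$) with the parameter $\kappa'=t$ in place of $\kappa$; since $t+s\in[\kappa',2\kappa']$, the second branch of that construction is \emph{literally} the right-hand side of the corollary, namely $S_k(t)S_k(s)x+\bigl(\int_0^t+\int_0^s\bigr)k(t+s-u)S_k(u)x\,du$. One then only needs that the extensions built from different admissible parameters agree at $t+s$, i.e.\ uniqueness of the non-degenerate local $k^{\ast2}$-semigroup generated by $A$ --- a fact the paper already relies on implicitly in the last sentence of Theorem \ref{ext}, and for which symmetry in $t,s$ also follows for free. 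Alternatively, you could keep your computation but would first have to prove a ``difference'' form of the composition law, asserting that $S_k(a)S_k(b)x-S_k(c)S_k(d)x$ equals the corresponding oriented integrals $\int_a^c+\int_b^d$ of $k(\sigma-r)S_k(r)x$ whenever $a+b=c+d=\sigma$ and $a,b,c,d<\tau$; that identity only involves $S_k$ on $[0,\tau)$ and would have to be derived from (\ref{integral}) in the spirit of Lemma \ref{lemma}, not read off from (\ref{algeb}).
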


The next theorem  was given    in \cite[Theorem 2]{M1} in the case $n=1$.

\begin{corollary}\label{inte} Let $n\in \N$, $0<\tau\le \infty$, $(S_\alpha(t))_{t\in[0,\tau)}$ is a local $\alpha$-times integrated  semigroup generated by $A$. Then the family of operators $(S_{(n+1)\alpha}(t))_{t\in [0, (n+1)\kappa]}$ defined by
$$
S_{(n+1)\alpha}(t)x = \int_0^t{(t-s)^{\alpha-1}\over \Gamma(\alpha)}S_{n\alpha}(s)xds, \qquad x\in X,
$$
for    $t\in [0, n\kappa]$ and
$$
 S_{(n+1)\alpha}(t)x =S_{n\alpha}(n\kappa)S_{\alpha}(t-n\kappa)x +\int_0^{n\kappa}{(t-s)^{\alpha-1}\over \Gamma(\alpha)}S_{n\alpha}(s)xds+ \int_0^{t-n\kappa}{(t-s)^{n\alpha-1}\over \Gamma(n\alpha)}S_{\alpha}(s)xds, $$
 for $x\in X$ and $ t\in [ n\kappa, (n+1)\kappa]$ is a local $\alpha$-times integrated  semigroup generated by $A$ for any $\kappa<\tau$.  Then we conclude that $A$ generates a local $\alpha$-times integrated  semigroup generated by $A$,  $(S_{ (n+1)\alpha}(t))_{t\in [0, (n+1)\tau)}$.

\end{corollary}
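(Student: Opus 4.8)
The plan is simply to specialize Theorem \ref{ext} to the kernel $k = j_\alpha$, where $j_\alpha(t) = t^{\alpha-1}/\Gamma(\alpha)$. By Remark~(i) of Section~4, a local $\alpha$-times integrated semigroup is by definition a local $j_\alpha$-convoluted semigroup, so the hypothesis that $(S_\alpha(t))_{t\in[0,\tau)}$ is a local $\alpha$-times integrated semigroup generated by $A$ says precisely that $(S_{j_\alpha}(t))_{t\in[0,\tau)}$ is a local $j_\alpha$-convoluted semigroup generated by $A$, under the notational convention $S_{j_\alpha} = S_\alpha$.

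The single ingredient to be carried over is the identification of the convolution powers of $j_\alpha$. From the semigroup property $j_\alpha \ast j_\beta = j_{\alpha+\beta}$ recorded in Section~2, an immediate induction gives $j_\alpha^{\ast m} = j_{m\alpha}$ for every $m \in \N$, that is $k^{\ast m}(t) = t^{m\alpha-1}/\Gamma(m\alpha)$. Substituting $k = j_\alpha$, $k^{\ast n} = j_{n\alpha}$ and $k^{\ast(n+1)} = j_{(n+1)\alpha}$ into the two defining formulas of Theorem \ref{ext}, and reading $S_{j_{n\alpha}} = S_{n\alpha}$, $S_{j_{(n+1)\alpha}} = S_{(n+1)\alpha}$, turns those formulas verbatim into the two displayed formulas of the corollary, with convolution kernel $(t-s)^{\alpha-1}/\Gamma(\alpha)$ on $[0,n\kappa]$ and the additional kernel $(t-s)^{n\alpha-1}/\Gamma(n\alpha)$ on $[n\kappa,(n+1)\kappa]$.

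Theorem \ref{ext} then yields, for each $\kappa < \tau$, that $(S_{(n+1)\alpha}(t))_{t\in[0,(n+1)\kappa]}$ is a local $j_{(n+1)\alpha}$-convoluted semigroup generated by $A$, i.e., a local $(n+1)\alpha$-times integrated semigroup generated by $A$ on $[0,(n+1)\kappa]$. Since the defining formulas evaluate $S_{n\alpha}$ and $S_\alpha$ only at arguments not exceeding $t$, they do not depend on $\kappa$, so the families are consistent as $\kappa \uparrow \tau$ and glue to a single local $(n+1)\alpha$-times integrated semigroup on $[0,(n+1)\tau)$. For $n=1$ this recovers \cite[Theorem~2]{M1}. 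There is no substantial obstacle here: the only step requiring a line of care is the bookkeeping that the substituted kernels and their $\Gamma$-normalisations match under convolution, which is exactly the content of the semigroup identity for $j_\alpha$; everything else is inherited directly from Theorem \ref{ext}.
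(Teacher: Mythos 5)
Your proposal is correct and follows exactly the route the paper intends: Corollary \ref{inte} is the specialization of Theorem \ref{ext} to $k=j_\alpha$, using $j_\alpha^{\ast m}=j_{m\alpha}$ to identify the convolution powers, and the paper offers no further argument beyond this. Your extra remarks on the consistency of the families as $\kappa\uparrow\tau$ and on the (evidently typographical) ``$\alpha$-times'' versus ``$(n+1)\alpha$-times'' in the statement are both accurate.
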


\begin{remark} {\rm In the case $\alpha \in \N$,  and $ t\in [ n\kappa, (n+1)\kappa]$, note that
$$
\int_0^{n\kappa}{(t-s)^{\alpha-1}\over \Gamma(\alpha)}S_{n\alpha}(s)xds=\sum_{j=0}^{\alpha-1}{(t-n\kappa)^{j}\over j!}S_{(n+1)\alpha-j}(n\kappa)
$$
and
$$
\int_0^{t-n\kappa}{(t-s)^{n\alpha-1}\over \Gamma(n\alpha)}S_{\alpha}(s)xds=\sum_{j=0}^{n\alpha-1}{(n\kappa)^{j}\over j!}S_{(n+1)\alpha-j}(t-n\kappa)
$$
 for $x\in X$ and  we recover the extension given in \cite[Theorem
4.1]{[AEK]} for the case $n=1$. To finish this section, we mention that other extension result (for scalar $\alpha$-times semigroups in this case) may be found in \cite[Lemma 4.4]{[Ku]},
$$
I^n_t\ast I^n_s= I^{2n}_{s+t}-\sum_{j=0}^{n-1}\left({s^{j}\over j!}I^{2n-j}_t-{t^{j}\over j!}I^{2n-j}_s\right),\qquad t,s>0,
$$
where $I^n_t(r):={(t-r)^n\over n!}\chi_{[0,t]}(r)$, $r\in \R^+$ and $n\in \N\cup\{0\}$.}
\end{remark}

\section{Algebra homomorphisms and local convoluted semigroups}
\setcounter{theorem}{0}

As the following theorem shows, local $k$-convoluted semigroups induce algebra homomorphisms from   certain spaces of test functions $\mathcal{D}_{k^{\ast\infty}}$. Note that the extension theorem (Theorem \ref{ext}) is necessary to define the algebra homomorphisms from functions defined on $\R^+$. The space $\mathcal{D}_{k^{\ast\infty}}$ is introduced in Definition \ref{definf}.

\begin{theorem} {\label{main2}} Let $k\in L^1_{loc}(\mathbb{R}^+)$ with $0\in {supp}(k)$,
and $(S_k(t))_{t \in [0,\tau]}$ a non-degenerate local  $k$-convoluted semigroup generated by $A$.  We define the map ${\mathcal G}_k: \mathcal{D}_{k^{\ast \infty}}\to
{\mathcal B}(X)$ by
$$
{\mathcal G}_k(f)x:=\int_0^{n\tau} W_{k^{\ast n}} f(t)S_{k^{\ast n}}(t)xdt, \qquad x\in X, f\in \mathcal{D}_{k^{\ast \infty}},
$$
  where $supp(f)\subset [0,n\tau]$ and  $(S_{k^{\ast n}}(t))_{t \in [0,n\tau]}$ is defined in Theorem \ref{ext} for some $n\in \N$. Then the following properties hold.

\begin{itemize}
\item[(i)] The map ${\mathcal G}_k$ is well defined, linear and bounded.
\item[(ii)] The map ${\mathcal G}_k(f\ast g)={\mathcal G}_k(f){\mathcal G}_k(g) $  for $f, g\in \mathcal{D}_{k^{\ast \infty}}$.
\item[(iii)] ${\mathcal G}_k(f)x\in D(A)$ and $A {\mathcal G}_k(f)x= -{\mathcal G}_k(f')x-f(0)x$ for any $f\in \mathcal{D}_{k^{\ast \infty}}$ and $x\in X$.
\end{itemize}
\end{theorem}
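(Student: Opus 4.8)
The plan is to verify the three assertions by reducing everything to the finite-level objects already constructed in Theorem~\ref{ext} and to the algebra identity~(\ref{convo}) for $W_{k^{\ast n}}$.

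\textbf{Well-definedness and linearity (i).} First I would check that the integral $\int_0^{n\tau}W_{k^{\ast n}}f(t)S_{k^{\ast n}}(t)x\,dt$ does not depend on the choice of $n$. If $\mathrm{supp}(f)\subset[0,n\tau]\subset[0,m\tau]$ with $m\ge n$, then by Lemma~\ref{equivalencia} (applied to $k^{\ast m}$) we have $\mathrm{supp}(W_{k^{\ast m}}f)\subset[0,n\tau]$, and by Lemma~\ref{infin}(ii), $W_{k^{\ast n}}f = k^{\ast(m-n)}\circ W_{k^{\ast m}}f$. Substituting this in and using that $S_{k^{\ast m}}(t)x=\int_0^t k^{\ast(m-n)}(t-s)S_{k^{\ast n}}(s)x\,ds$ for $t\in[0,n\tau]$ (the defining relation of Theorem~\ref{ext}, iterated), a Fubini computation identifies $\int_0^{m\tau}W_{k^{\ast m}}f(t)S_{k^{\ast m}}(t)x\,dt$ with $\int_0^{n\tau}W_{k^{\ast n}}f(t)S_{k^{\ast n}}(t)x\,dt$. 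Linearity in $f$ is then immediate from linearity of $W_{k^{\ast n}}$ and of the integral (on a common $n$), and boundedness of $\mathcal{G}_k(f)$ on $X$ follows from strong continuity of $(S_{k^{\ast n}}(t))_{t\in[0,n\tau]}$ and the uniform boundedness principle, since $W_{k^{\ast n}}f\in L^1$ with compact support.

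\textbf{Multiplicativity (ii).} Given $f,g\in\mathcal{D}_{k^{\ast\infty}}$, choose $n$ with $\mathrm{supp}(f),\mathrm{supp}(g)\subset[0,n\tau]$; then $\mathrm{supp}(f\ast g)\subset[0,2n\tau]$, and $f\ast g\in\mathcal{D}_{k^{\ast 2n}}$. I would compute $\mathcal{G}_k(f\ast g)x=\int_0^{2n\tau}W_{k^{\ast 2n}}(f\ast g)(s)S_{k^{\ast 2n}}(s)x\,ds$, expand $W_{k^{\ast 2n}}(f\ast g)$ via formula~(\ref{convo}) with $k$ there replaced by $k^{\ast 2n}$ and with $W_{k^{\ast 2n}}f=k^{\ast n}\circ W_{k^{\ast n}}f$, $W_{k^{\ast 2n}}g=k^{\ast n}\circ W_{k^{\ast n}}g$ (Lemma~\ref{infin}(ii)), and then use the algebraic composition property of Proposition~\ref{equiv} for the extended semigroup $(S_{k^{\ast 2n}}(t))$ to collapse the $k^{\ast 2n}$-kernels against $S_{k^{\ast 2n}}$. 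After several applications of Fubini's theorem and the substitutions relating $S_{k^{\ast 2n}}$ to $S_{k^{\ast n}}$, the double integral should factor as $\bigl(\int_0^{n\tau}W_{k^{\ast n}}f(t)S_{k^{\ast n}}(t)\,dt\bigr)\bigl(\int_0^{n\tau}W_{k^{\ast n}}g(r)S_{k^{\ast n}}(r)\,dr\bigr)x=\mathcal{G}_k(f)\mathcal{G}_k(g)x$. This bookkeeping — matching the two terms of~(\ref{convo}) against the two terms of~(\ref{algeb}) while keeping track of supports — is the main obstacle; it is exactly the place where Theorem~\ref{ext} (rather than a crude $n$-fold iteration of the classical extension) is needed, because the kernels involved are $k^{\ast 2n}$ on an interval of length $2n\tau$.

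\textbf{Generator relation (iii).} Here I would use~(\ref{derivada}): for $f\in\mathcal{D}_{k^{\ast\infty}}$, $f'\in\mathcal{D}_{k^{\ast\infty}}$ and $W_{k^{\ast n}}(f')=(W_{k^{\ast n}}f)'$. Writing $w:=W_{k^{\ast n}}f$, which is $C^\infty$ with $\mathrm{supp}(w)\subset[0,n\tau]$, I integrate by parts in $\int_0^{n\tau}w'(t)S_{k^{\ast n}}(t)x\,dt$; the boundary terms vanish at $t=n\tau$ (compact support) and at $t=0$ give $-w(0)S_{k^{\ast n}}(0)x=0$ since $S_{k^{\ast n}}(0)=0$, leaving $-\mathcal{G}_k(f')x=\int_0^{n\tau}w(t)\frac{d}{dt}S_{k^{\ast n}}(t)x\,dt$ in a suitable (weak) sense. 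Then I invoke the defining identity~(\ref{integral}) for the local $k^{\ast n}$-semigroup: $A\int_0^t S_{k^{\ast n}}(r)x\,dr=S_{k^{\ast n}}(t)x-\bigl(\int_0^t k^{\ast n}(s)\,ds\bigr)x$, together with $S_{k^{\ast n}}(t)A\subset AS_{k^{\ast n}}(t)$ and closedness of $A$, to move $A$ inside the integral $\mathcal{G}_k(f)x=\int_0^{n\tau}w(t)S_{k^{\ast n}}(t)x\,dt$. Integrating by parts once more to transfer the derivative, and using that $k^{\ast n}\circ w=k^{\ast n}\circ W_{k^{\ast n}}f=f$ so that $\int_0^{n\tau}w(t)k^{\ast n}(t)\,dt$ reproduces $f(0)$ via $(k^{\ast n}\circ w)(0)=f(0)$, yields $A\mathcal{G}_k(f)x=-\mathcal{G}_k(f')x-f(0)x$, and in particular $\mathcal{G}_k(f)x\in D(A)$.
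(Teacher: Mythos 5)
Your proposal tracks the paper's own proof almost step for step: well-definedness via Lemma \ref{equivalencia}, Lemma \ref{infin}(ii), the relation $S_{k^{\ast m}}=k^{\ast(m-n)}\ast S_{k^{\ast n}}$ on $[0,n\tau]$ and Fubini; multiplicativity by expanding $W_{k^{\ast 2n}}(f\ast g)$ with formula (\ref{convo}) and collapsing the resulting four integrals against the composition law (\ref{algeb}) of $(S_{k^{\ast 2n}}(t))$; and the generator relation by integration by parts combined with (\ref{derivada}), (\ref{integral}), closedness of $A$, and $(k^{\ast n}\circ W_{k^{\ast n}}f)(0)=f(0)$. Two cosmetic differences: in (ii) the paper stays entirely at level $2n$ and only invokes the $n$-independence from part (i) at the very end, rather than substituting back down to level $n$; and in (iii) the paper sidesteps your "weak derivative" $\frac{d}{dt}S_{k^{\ast n}}(t)x$ by first rewriting $\mathcal{G}_k(f)x=-\int_0^{n\tau}(W_{k^{\ast n}}f)'(t)\int_0^tS_{k^{\ast n}}(s)x\,ds\,dt$ and then applying $A$ to the inner integral, which is cleaner since $S_{k^{\ast n}}(\cdot)x$ need not be differentiable.

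The one genuine gap is in (i): the assertion that "the map $\mathcal{G}_k$ is bounded" refers to continuity of $f\mapsto\mathcal{G}_k(f)$ from $(\mathcal{D}_{k^{\ast\infty}},\tau)$ into $\mathcal{B}(X)$, not merely to $\mathcal{G}_k(f)\in\mathcal{B}(X)$ for each fixed $f$, which is all your uniform-boundedness remark yields. The nontrivial point is that convergence $f_m\to f$ in the test-function topology (all supports in a fixed $[0,n\tau]$) must be upgraded to $W_{k^{\ast n}}f_m\to W_{k^{\ast n}}f$ in $L^1[0,n\tau]$, and this does not follow from the definition of $W_{k^{\ast n}}$, which is only known to be a closed operator. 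The paper handles this by regarding $T'_{k^{\ast n}}$ as a bounded injective operator on $L^1[0,n\tau]$ and invoking the open mapping theorem to invert it on its range; you need to supply this step (or an equivalent continuity argument for $W_{k^{\ast n}}$ on functions supported in $[0,n\tau]$) to get the stated boundedness of $\mathcal{G}_k$.
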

\begin{proof} Take $f\in \mathcal{D}_{k^{\ast \infty}}$ and $supp(f)\subset [0, n\tau]$ for some $n\in \N$. First, we prove that ${\mathcal G}_k$ is well defined. Let $m\ge n$, $k^{\ast m}= k^{\ast n}\ast k^{\ast (m-n)}$, and   $k^{\ast(m-n)}\circ W_{k^{\ast m}}f =W_{k^{\ast n}}f  $ by Lemma \ref{infin} (ii). Now we apply the Lemma \ref{equivalencia} to conclude $supp(W_{k^{\ast m}}f)\subset [0, n\tau]$. By Theorem \ref{ext2} and the Fubini theorem, we get that
\begin{eqnarray*}
&\,&\int_0^{m\tau} W_{k^{\ast m}} f(t)S_{k^{\ast m}}(t)xdt=
\int_0^{n\tau} W_{k^{\ast m}} f(t)(k^{\ast(m-n)}\ast S_{k^{\ast n}})(t)xdt\cr
&\,&=\int_0^{n\tau} k^{\ast(m-n)}\circ W_{  k^{\ast m}} f(t) S_{k^{\ast n}}(t)xdt= \int_0^{n\tau} W_{k^{\ast n}} f(t)S_{k^{\ast n}}(t)xdt
\end{eqnarray*}
for $x\in X$.

It is direct to check that ${\mathcal G}_k$ is  linear. Now take $(f_n)_{n\ge 1}\subset \mathcal{D}_{k^{\ast \infty}} $, and $f \in \mathcal{D}_{k^{\ast \infty}}$ such that $f_n\to f$. Then there exists $n \in \N$ such that ${supp}(f_n),{supp}(f)\subset [0,n\tau]$. Note that the map $t \mapsto  S_{k^{\ast n}}(t)x$, $[0,n\tau]\to X$,  is continuous and
$$
\Vert {\mathcal G}_k(f_n)x-{\mathcal G}_k(f)x\Vert\le  \int_0^{n\tau} \vert W_{k^{\ast n}} f_n(t)-W_{k^{\ast n}} f(t)\vert \, \Vert S_{k^{\ast n}}(t)x\Vert dt\le C_x \int_0^{n\tau} \vert W_{k^{\ast n}} (f_n- f)(t)\vert  dt
$$
for $x\in X$. Now consider the operator $T'_{k^{\ast n}}:L^1[0,n\tau]\to L^1[0,n\tau]$, $f\mapsto T'_{k^{\ast n}}(f)= k^{\ast n}\circ f$  given in Section 3. By the open mapping theorem, $T'_{k^{\ast n}}$ is open; we conclude that $ W_{k^{\ast n}}f_n\to  W_{k^{\ast n}}f$ in $L^1[0,n\tau]$, the map ${\mathcal G}_k$ is bounded and the part (i) is proved.

Take $f, g \in \mathcal{D}_{k^{\ast \infty}}$, i.e. $ f, g \in \mathcal{D}_{k^{\ast n}}$ and then $f\ast g\in \mathcal{D}_{k^{\ast n}}$ (see \cite[Theorem 2.10]{KLM}) for $n \ge 1$. Then
   $f\ast g\in \mathcal{D}_{k^{\ast \infty}}$. Now we show that ${\mathcal G}_k(f\ast g )={\mathcal G}_k(f)
{\mathcal G}_k(g)$. Take $n\in \N$ such that $supp(f), supp(g)\subset [0,n \tau]$ and by Lemma \ref{equivalencia}, $supp(W_{k^{\ast 2n}}f), supp(W_{k^{\ast 2n}}g)\subset [0,n \tau]$. Then $supp(f\ast g)\subset [0,2n \tau]$ and $supp(W_{k^{\ast 2n}}(f\ast g))\subset [0,2n \tau]$.  By (\ref{convo}) we have that
\begin{eqnarray*}
{\mathcal G}_k(f\ast g)x&=&\int_0^{2n\tau} W_{k^{\ast 2n}}(f\ast
g)(t)S_{{k^{\ast 2n}}}(t)xdt\\
&=&\int_0^{2n\tau}\int_0^{t}W_{k^{\ast 2n}}g(r)\int_{t-r}^{t}k^{\ast 2n}(s+r-t)
W_{k^{\ast 2n}}f(s) dsdrS_{k^{\ast 2n}}(t)xdt\\
&\,\,&-\int_0^{2n\tau}\int_t^{2n\tau}W_{k^{\ast 2n}}g(r)
\int_{t}^{2n\tau}{k^{\ast 2n}}(s+r-t)W_{k^{\ast 2n}}f(s)dsdrS_{{k^{\ast 2n}}}(t)xdt.
\end{eqnarray*}
By Fubini theorem, we obtain the four integral expressions
\begin{eqnarray*}
{\mathcal G}_{k}(f\ast g)x&=&\int_0^{2n\tau}W_{k^{\ast 2n}}g(r)\int_0^{r}W_{k^{\ast 2n}}f(s)
\int_{r}^{s+r}{k^{\ast 2n}}(s+r-t)S_{k^{\ast 2n}}(t)xdtdsdr \\
&\,&\qquad+\int_0^{2n\tau}W_{k^{\ast 2n}}g(r)\int_r^{2n\tau}W_{k^{\ast 2n}}f(s)
\int_{s}^{s+r}{k^{\ast 2n}}(s+r-t)S_{k^{\ast 2n}}(t)xdtdsdr \\
&\,&\qquad-\int_0^{2n\tau}W_{k^{\ast 2n}}g(r)\int_0^{r}W_{k^{\ast 2n}}f(s)
\int_{0}^{s}{k^{\ast 2n}}(s+r-t)S_{k^{\ast 2n}}(t)xdtdsdr \\
&\,&\qquad-\int_0^{2n\tau}W_{k^{\ast 2n}}g(r)\int_r^{2n\tau}W_{k^{\ast 2n}}f(s)
\int_{0}^{r}{k^{\ast 2n}}(s+r-t)S_{k^{\ast 2n}}(t)xdtdsdr \cr &=&
\int_0^{2n\tau}W_{k^{\ast 2n}}g(r)\int_0^{r}W_{k^{\ast 2n}}f(s) \left(\int_{r}^{s+r}-
\int_{0}^{s}{k^{\ast 2n}}(s+r-t)S_{k^{\ast 2n}}(t)xdt\right)dsdr
\\
&\,&+\int_0^{2n\tau}W_{k^{\ast 2n}}g(r)\int_r^{n\tau}W_{k^{\ast 2n}}f(s)
\left(\int_{s}^{s+r}-
\int_{0}^{r}{k^{\ast 2n}}(s+r-t)S_{k^{\ast 2n}}(t)xdt\right)dsdr\\
&=& \int_0^{2n\tau}W_{k^{\ast 2n}}g(r)S_{k^{\ast 2n}}(r)\int_0^{r}W^{}_{k^{\ast 2n}}f(s)
S_{k^{\ast 2n}}(s)xdsdr
\\
&\,&+\int_0^{2n\tau}W_{k^{\ast 2n}}g(r)S_{k^{\ast 2n}}(r)\int_r^{n\tau}
W_{k^{\ast 2n}}f(s)S_{k^{\ast 2n}}(s)xdsdr\\
&=&  \int_0^{2n\tau}W_{k^{\ast 2n}}g(r)S_{k^{\ast 2n}}(r)\int_0^{2n\tau}W^{}_{k^{\ast 2n}}f(s)
S_{k^{\ast 2n}}(s)xdsdr={\mathcal G}_{k}(g){\mathcal G}_{k}(f)x,
\end{eqnarray*}
where we have applied the formula (\ref{algeb}) and the part (ii) is shown.

Now we consider $f\in \mathcal{D}_{k^{\ast \infty}}$, $supp(f)\subset [0,n\tau]$ and $x\in X$. We apply the formulae (\ref{derivada}) and  (\ref{integral}) to get
\begin{eqnarray*}
A{\mathcal G}_{k}(f)x= &=&-A\int_0^{n\tau}(W_{k^{\ast n}}f)'(t)\int_0^{t}S_{k^{\ast n}}(s)xdsdt\cr=
&-&\int_0^{n\tau}W_{k^{\ast n}}f'(t)\left(S_{k^{\ast n}}(t)x-\int_0^tk^{\ast n}(s)dsx\right)dt\\
&=&-{\mathcal G}_{k^{\ast n}}(f')x-
\int_0^{n\tau}W_{k^{\ast n}}f(t){k^{\ast n}}(t)dtx=-{\mathcal G}_{k^{\ast n}}(f')x-f(0)x,
\end{eqnarray*}
and the part (iii) is proven.\end{proof}

The previous theorem allows to show as a consequence one of main results in \cite{KLM}.

\begin{remark}\label{notes} {\rm When the operator $A$ generates a global convoluted semigroup $(S_k(t))_{t\ge 0}$, the homomorphism ${\mathcal G}_k$ is defined from ${\mathcal D}_k$ to ${\mathcal B}(X)$ by
$$
{\mathcal G}_k(f)x=\int_0^\infty W_kf(t)S_k(t)xdt, \qquad x\in X, \qquad f\in {\mathcal D}_k,
$$
see \cite[Theorem 5.5]{KLM}. Under some conditions of the boundedness of $(S_k(t))_{t\ge 0}$,  the homomorphism  ${\mathcal G}_k$  may be extended to a  bounded Banach algebra homomorphism, see \cite[Theorem 5.6]{KLM}.}

\end{remark}

Let $k, l\in L^1_{loc}([0, \tau))$ with $0\in {supp}(k)$,
and $(S_k(t))_{t \in [0,\tau)}$ a non-degenerate local  $k$-convoluted semigroup generated by $A$. Then $(l\ast S_k(t))_{t \in [0,\tau]}$ is a non-degenerate local  $k$-convoluted semigroup generated by $A$, see a similar proof in \cite[Proposition 5.2]{KLM}.
\begin{corollary}Let $k, l\in L^1_{loc}(\R^+)$ with $0\in {supp}(k)\cap{supp}(l)$,
and $(S_k(t))_{t \in [0,\tau)}$,  $(S_{k\ast l}(t))_{t \in [0,\tau)}$  non-degenerate local  $k$-convoluted semigroups generated by $A$. Then

$$
{\mathcal G}_{k\ast l}(f)=
{\mathcal G}_{k}(f), \qquad f\in \mathcal{D}_{({k\ast l})^{\ast \infty}},
$$
where ${\mathcal G}_{k}(f), {\mathcal G}_{k\ast l} $ are defined in Theorem \ref{main2}.
\end{corollary}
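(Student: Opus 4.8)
The plan is to reduce the statement for $\mathcal{G}_{k\ast l}$ to that for $\mathcal{G}_k$ by carefully matching the two integral formulas on a common, sufficiently large interval, exploiting the compatibility between $W_{k^{\ast n}}$, $W_{(k\ast l)^{\ast n}}$ and the convolution powers, exactly in the spirit of the well-definedness argument in the proof of Theorem \ref{main2}(i). First I would fix $f\in\mathcal{D}_{(k\ast l)^{\ast\infty}}$ with $\operatorname{supp}(f)\subset[0,n\tau]$ for some $n\in\N$. Since $0\in\operatorname{supp}(k)\cap\operatorname{supp}(l)$ we have $0\in\operatorname{supp}(k\ast l)$, so $f\in\mathcal{D}_{(k\ast l)^{\ast n}}$ and, because $(k\ast l)^{\ast n}=k^{\ast n}\ast l^{\ast n}$, Lemma \ref{infin}(ii) (or the basic identity (\ref{cop})) gives $f\in\mathcal{D}_{k^{\ast n}}$ together with
$$
W_{k^{\ast n}}f = l^{\ast n}\circ W_{(k\ast l)^{\ast n}}f .
$$
By Lemma \ref{equivalencia}, since $\operatorname{supp}(f)\subset[0,n\tau]$ both $W_{k^{\ast n}}f$ and $W_{(k\ast l)^{\ast n}}f$ are supported in $[0,n\tau]$, so the integrals defining both $\mathcal{G}_k(f)$ and $\mathcal{G}_{k\ast l}(f)$ over $[0,n\tau]$ make sense.

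Next I would relate the two semigroup families. The hypothesis says $A$ generates both a local $k$-convoluted semigroup $(S_k(t))_{t\in[0,\tau)}$ and a local $k\ast l$-convoluted semigroup $(S_{k\ast l}(t))_{t\in[0,\tau)}$; by the remark preceding this corollary (the analogue of \cite[Proposition 5.2]{KLM}), $l\ast S_k$ is also a local $k\ast l$-convoluted semigroup generated by $A$, and by non-degeneracy/uniqueness $S_{k\ast l}(t)=(l\ast S_k)(t)$ on $[0,\tau)$. Iterating the extension formula of Theorem \ref{ext} one obtains $S_{(k\ast l)^{\ast n}}(t)=(l^{\ast n}\ast S_{k^{\ast n}})(t)$ on $[0,n\tau)$; the cleanest way to see this is to observe that $l^{\ast n}\ast S_{k^{\ast n}}$ is a local $(k\ast l)^{\ast n}$-convoluted semigroup generated by $A$ (again by the cited proposition applied $n$ times, or directly from the definition), hence by uniqueness it coincides with the extension $S_{(k\ast l)^{\ast n}}$ produced by Theorem \ref{ext}.

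Then the computation is a single application of Fubini together with the associativity and commutativity of $\ast$ and the definition of $\circ$: using $\operatorname{supp}(W_{(k\ast l)^{\ast n}}f)\subset[0,n\tau]$,
\begin{eqnarray*}
\mathcal{G}_{k\ast l}(f)x
&=&\int_0^{n\tau} W_{(k\ast l)^{\ast n}}f(t)\,S_{(k\ast l)^{\ast n}}(t)x\,dt
=\int_0^{n\tau} W_{(k\ast l)^{\ast n}}f(t)\,(l^{\ast n}\ast S_{k^{\ast n}})(t)x\,dt\\
&=&\int_0^{n\tau}\bigl(l^{\ast n}\circ W_{(k\ast l)^{\ast n}}f\bigr)(t)\,S_{k^{\ast n}}(t)x\,dt
=\int_0^{n\tau} W_{k^{\ast n}}f(t)\,S_{k^{\ast n}}(t)x\,dt=\mathcal{G}_k(f)x,
\end{eqnarray*}
for all $x\in X$, which is the assertion (the interchange of the inner $\circ$-integral with the outer $dt$-integral is exactly the ``$\ast$ versus $\circ$'' adjunction already used in the proofs of Theorem \ref{main2}(i) and \cite[Theorem 2.5]{KLM}).

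The main obstacle is not the final Fubini step but the bookkeeping in the middle paragraph: one must be sure that the $n$-fold extension of $S_{k\ast l}$ given by Theorem \ref{ext} genuinely equals $l^{\ast n}\ast S_{k^{\ast n}}$ on $[0,n\tau)$, rather than merely agreeing with it on $[0,\tau)$. The safe route is to avoid comparing the two explicit piecewise formulas of Theorem \ref{ext} directly and instead argue by uniqueness: show $l^{\ast n}\ast S_{k^{\ast n}}$ satisfies the defining identity (\ref{integral}) for the kernel $(k\ast l)^{\ast n}$ on $[0,n\tau)$, so that non-degeneracy forces it to coincide with $S_{(k\ast l)^{\ast n}}$. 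Once this identification is in place, everything else is routine.
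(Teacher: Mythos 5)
Your proposal is correct and follows essentially the same route as the paper's own proof: write $(k\ast l)^{\ast n}=k^{\ast n}\ast l^{\ast n}$, identify $S_{(k\ast l)^{\ast n}}=l^{\ast n}\ast S_{k^{\ast n}}$, move $l^{\ast n}$ across the integral via the $\ast$/$\circ$ adjunction, and finish with formula (\ref{cop}) giving $W_{k^{\ast n}}f=l^{\ast n}\circ W_{k^{\ast n}\ast l^{\ast n}}f$. Your extra care in justifying $S_{(k\ast l)^{\ast n}}=l^{\ast n}\ast S_{k^{\ast n}}$ on all of $[0,n\tau)$ by uniqueness/non-degeneracy rather than by comparing the piecewise formulas of Theorem \ref{ext} is a point the paper passes over silently, and is a welcome clarification rather than a deviation.
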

\begin{proof}
Take $f \in {\mathcal D}_{{(k\ast l)}^{\ast \infty}}$ and $supp(f)\subset [0,n\tau]. $ By (\ref{cop}), $f\in  {\mathcal D}_{{(k\ast l)}^{\ast \infty}}\subset {\mathcal D}_{{k}^{\ast \infty}}$ and
\begin{eqnarray*}
&\,&{\mathcal G}_{k\ast l}(f)x=\int_0^{n\tau} W_{(k\ast l)^{\ast n}}f(t)
S_{(k\ast l)^{\ast n}}(t)xdt=\int_0^{n\tau} W_{k^{\ast n}\ast l^{\ast n}}f(t)
S_{k^{\ast n}\ast l^{\ast n}}(t)xdt\cr
&\,&=\int_0^{n\tau} W_{k^{\ast n}\ast l^{\ast n}}f(t)(l^{\ast n}\ast S_{k^{\ast n}})(t)xdt=\int_0^{n\tau} (l^{\ast n} \circ W_{k^{\ast n}\ast l^{\ast n}}f)(t) S_{k^{\ast n}}(t)xdt\cr
&\,&= \int_0^{n\tau}  W_{k^{\ast n}}f(t) S_{k^{\ast n}}(t)xdt={\mathcal G}_{k}(f)x
\end{eqnarray*}
where we have applied the formula (\ref{cop}).
\end{proof}

\begin{corollary} \label{bloco}
Let $(S_\alpha(t))_{t \in [0,\tau]}$ a non-degenerate local  $\alpha$-times integrated semigroup generated by $A$.  We define the map ${\mathcal G}_\alpha: \mathcal{D}_{+}\to
{\mathcal B}(X)$ by
$$
{\mathcal G}_\alpha(f)x:=\int_0^{n\tau} W_{\alpha n} f(t)S_{\alpha n}(t)xdt, \qquad x\in X, f\in \mathcal{D}_{+},
$$
  where $supp(f)\subset [0,n\tau]$ and  $(S_{n\alpha}(t))_{t \in [0,n\tau]}$ is defined in Theorem \ref{ext} for some $n\in \N$. Then the map ${\mathcal G}_\alpha$ is well defined, linear, bounded and ${\mathcal G}_k(f\ast g)={\mathcal G}_k(f){\mathcal G}_k(g) $  for $f, g\in \mathcal{D}_{k^{\ast \infty}}$. Moreover, ${\mathcal G}_k(f)x\in D(A)$ and $$A {\mathcal G}_k(f)x= -{\mathcal G}_k(f')x-f(0)x, \qquad  f\in \mathcal{D}_{+}, \quad x\in X.
  $$
\end{corollary}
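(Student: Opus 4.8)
The plan is to obtain the statement as the special case $k=j_\alpha$ of Theorem~\ref{main2}, after checking that this choice of $k$ satisfies the hypotheses there and that all the objects occurring in Theorem~\ref{main2} reduce to the ones named in the corollary.

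First I would record the elementary facts about $j_\alpha$. For $\alpha>0$ the function $j_\alpha(t)=t^{\alpha-1}/\Gamma(\alpha)$ lies in $L^1_{loc}(\R^+)$ and is strictly positive on $(0,\infty)$, so ${supp}(j_\alpha)=[0,\infty)$ and in particular $0\in{supp}(j_\alpha)$; hence the hypotheses of Theorem~\ref{main2} are met. Next, from the semigroup property $j_\alpha\ast j_\beta=j_{\alpha+\beta}$ one gets $j_\alpha^{\ast n}=j_{n\alpha}$ for every $n\in\N$, so that $S_{j_\alpha^{\ast n}}=S_{j_{n\alpha}}=S_{n\alpha}$ is precisely the $n\alpha$-times integrated semigroup furnished by Theorem~\ref{ext}, and, as noted in the Example of Section~\ref{seccion}, $W_{j_\alpha^{\ast n}}=W_{j_{n\alpha}}=W_{n\alpha}$ is the Weyl fractional derivative of order $n\alpha$. (One may also check directly that the extension formula of Theorem~\ref{ext} specialised to $k=j_\alpha$ is the one displayed in Corollary~\ref{inte}.) Finally, again by the Example of Section~\ref{seccion}, ${\mathcal D}_{j_\alpha}={\mathcal D}_{j_\alpha^{\ast\infty}}={\mathcal D}_+$, so the domain ${\mathcal D}_{k^{\ast\infty}}$ appearing in Theorem~\ref{main2} is here the whole space ${\mathcal D}_+$, and the map ${\mathcal G}_{j_\alpha}$ of Theorem~\ref{main2} coincides with the map ${\mathcal G}_\alpha$ in the statement.

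With these identifications in place, parts (i), (ii), (iii) of Theorem~\ref{main2} translate verbatim into the three assertions of the corollary: ${\mathcal G}_\alpha$ is well defined (independently of the admissible $n$ with ${supp}(f)\subset[0,n\tau]$), linear and bounded on ${\mathcal D}_+$; it is multiplicative, ${\mathcal G}_\alpha(f\ast g)={\mathcal G}_\alpha(f){\mathcal G}_\alpha(g)$; and ${\mathcal G}_\alpha(f)x\in D(A)$ with $A{\mathcal G}_\alpha(f)x=-{\mathcal G}_\alpha(f')x-f(0)x$ for all $f\in{\mathcal D}_+$ and $x\in X$. Non-degeneracy of $(S_\alpha(t))$ is exactly the hypothesis of Theorem~\ref{main2}, so nothing extra is needed there.

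There is no genuine obstacle here: the content is entirely contained in Theorem~\ref{main2}, and the only care required is the bookkeeping that matches the notation $S_{n\alpha}$, $W_{n\alpha}$, ${\mathcal D}_+$ used for integrated semigroups with the general notation $S_{k^{\ast n}}$, $W_{k^{\ast n}}$, ${\mathcal D}_{k^{\ast\infty}}$ of the convoluted setting — together with the verifications $j_\alpha^{\ast n}=j_{n\alpha}$ and ${\mathcal D}_{j_\alpha^{\ast\infty}}={\mathcal D}_+$ that legitimise that matching.
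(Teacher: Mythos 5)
Your proposal is correct and is exactly the route the paper intends: the corollary is stated as an immediate specialization of Theorem \ref{main2} to $k=j_\alpha$, using $j_\alpha^{\ast n}=j_{n\alpha}$, the identification of $W_{j_{n\alpha}}$ with the Weyl derivative $W_{n\alpha}$, and the fact (recorded in the Example of Section \ref{seccion}) that ${\mathcal D}_{j_\alpha^{\ast\infty}}={\mathcal D}_+$. The bookkeeping you carry out is precisely the content of the corollary, so nothing further is needed.
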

\section{Examples and applications}
\setcounter{theorem}{0}

In this section we consider different examples of  convoluted semigroups which have been presented in the literature.  Our results are applied in all these examples to illustrate its importance.

\subsection{Differential operators on $L^p(\R^N)$} \cite{AK, [ABHN], [Hi]} Let $E$ be one of the spaces $L^p(\R^n)$ ($1\le p\le \infty$), $C_0(\R^n)$, $BUC(\R^n)$, or $C_b(\R^n)$ and $A_E$ the associated operator to a differential operator and defined by Fourier multipliers, see details in \cite[Section 4]{[Hi]},  \cite[Chapter 8]{[ABHN]}. Under some conditions, the operator $A_E$ generates an $\alpha$-times integrated semigroup $(S_\alpha(t))_{t\ge 0}$  on  $E$  and $\Vert S_\alpha(t)\Vert\le C_E (1+t^\alpha)$ for some constant $C_E$ and  certain $\alpha >0$, see  \cite[Theorem  6.3]{AK} and  \cite[Theorem 4.2]{[Hi]}. In this case the map  ${\mathcal G}_\alpha: {\mathcal D}_+\to {\mathcal B}(L^p(\R^N))$ (Corollary \ref{bloco}) extends to a Banach algebra homomorphism ${\mathcal G}_\alpha: {\mathcal T}_\alpha(1+t^\alpha) \to {\mathcal B}(L^p(\R^N))$ where ${\mathcal T}_{\alpha}(1+t^\alpha)$ is the completion of
${\mathcal D}_+$ in the norm
$$
\Vert f\Vert_{\alpha,0,\alpha }:=\int_0^\infty\vert W^\alpha f(t)\vert (1+t^{\alpha})dt, \qquad f\in {\mathcal D}_+,
$$
where $W_\alpha$ is the Weyl derivation of order $\alpha$, see \cite[Proposition 4.7]{M0}. Other examples of global $\alpha$-times integrated semigroups  may be found in \cite[Theorem 5.3]{CCO} and \cite[Proposition 8.1]{KW}. Earlier results on the Schor\"odinger equation using distribution semigroups can be found in  \cite{[BE85]} where differential operators are treated using Fourier multipliers in $L^p$ spaces.

\subsection{Multiplication local integrated  semigroup in $\ell^2$}
 \cite[Example 1.2.6]{[MF]}, \cite[Example 1]{M1}. Let $\ell^2$ be the Hilbert space of all
 complex sequences $x=(x_m)_{m=1}$ such that
 $$
 \sum_{m=1}^\infty\vert x_m\vert^2<\infty,
 $$
with the Euclidean norm  $\Vert x\Vert:= \left(\sum_{m=1}^\infty\vert
x_m\vert^2\right)^{1\over 2}$. Take $T>0$ and define
$$
a_m={m\over T}+ i\left(\left({e^m\over m}\right)^2-\left({m\over
T}\right)^2\right)^{1\over 2}, \quad m\in \N,
$$
where $i^2=-1$. For any $\alpha >0$ let  $(S_\alpha (t))_{t>0}$ be
defined by
$$
S_\alpha (t)x=\left({1\over \Gamma(\alpha )} \int_0^t(t-s)^{\alpha
-1}e^{a_m s}x_m ds\right)_{m=1},
$$
for $x\in D(S_\alpha (t))$ where  $D(S_\alpha (t))=\{x\in
\ell^2\,\, ; S_\alpha (t)x\in \ell^2\}.$ Then
$(S_\alpha(t))_{t\in[0, \alpha T)}$ is a local $\alpha $-times
integrated semigroup on $\ell^2$, $(S_\alpha (t))_{t\in[0,\alpha T)}\subset {\mathcal B}(\ell^2)$, such that  $(S_\alpha (t))_{t\in[0,\alpha T)}$ cannot be extended to $t\ge \alpha
 T$, see \cite[Example 1]{M1}. We may apply  the Corollary \ref{inte}  to define $(S_{n\alpha} (t))$ for $t<n\alpha T$ and Corollary \ref{bloco} to define the map
 ${\mathcal G}_\alpha: \mathcal{D}_{+}\to
{\mathcal B}(X)$ by
$$
{\mathcal G}_\alpha(f)x:=\int_0^{n\tau} W_{\alpha n} f(t)S_{\alpha n}(t)xdt, \qquad x\in X, f\in \mathcal{D}_{+},
$$
with $supp(f)\subset [0,n\tau]$. Other examples of local integrated semigroups defined by multiplication may be found in \cite[Example 4.4 (c), (d)]{[AEK]}.

\subsection{The Laplacian on $L^2[0,\pi]$ with Dirichlet  boundary conditions} \cite[Section 3, 5]{[Ba]}, \cite[Example 6.1]{[KP]}.  The operator $-\Delta$ on $L^2[0,\pi]$ with Dirichlet or Neumann   boundary conditions generates a polynomially bounded $\kappa$-convoluted semigroups $(S_{\kappa}(t))_{t\ge 0}$, where $\Vert S_{\kappa}(t)\Vert \le C(1+t^3)$ where $C>0$; note that  $\kappa$ is given in Example \ref{baumer} and $\vert \kappa(t)\vert\le Ce^{\beta}t$ for $t\ge 0$ and some $C, \beta >0$.
By Remarks \ref{notes}, there exists an algebra homomorphism ${\mathcal G}_\kappa: {\mathcal D}_\kappa \to {\mathcal B}(L^2[0,\pi])$ such that  it extends to a Banach algebra homomorphism ${\mathcal G}_\kappa: {\mathcal T}_\kappa(e_\beta) \to {\mathcal B}(L^2[0,\pi])$,  see Theorem \ref{teoalgebras} and \cite[Theorem 6.5]{KLM}.

Other examples of generators of  $k$-convoluted semigroups (which  does not generate integrated semigroups) may be found in \cite[Example 6.2]{[KP]}, or \cite[Section 2.8]{ko}, where
$$
k(t)={e^{-a^2/4t}\over 2\sqrt{\pi t^3}}=\frac{1}{2\pi i}\int_{r-i\infty}^{r+i\infty}e^{\lambda
t/{a^2}-\sqrt{\lambda}}d\lambda, \qquad t>0,
$$
for some $a>0$ (where in the integral, $r$ is any positive real number). In this case, we have:
$$\widehat{k}(\lambda)=\frac{1}{a}e^{-a\sqrt{\lambda}},\,\, \Re\lambda>0.
$$

\subsection{Ultradistributions in the Gevrey classes} \cite[Section 5. Applications ]{CL}, \cite[4. Example and final comments]{C}.
Let $M_k$, $k=0,1,2,...$ be a Gevrey type sequences, i.e., a sequence of positive numbers such that $M_0=1$, which is logarithmically convex and non-quasianalytic:
$$
\qquad M_k^2\le M_{k-1}M_{k+1}, \quad \hbox{and}\quad \sum_{k=1}^\infty M_{k-1}/M_k<\infty,
$$
for example $(k!^s)$, $(k^{ks})$ and $\Gamma(1+ks)$ for $s>1$.
 Let $m_k= M_k/M_{k-1}$  for $k\in \N$,
$$
P(z)= \prod_{j=1}^{\infty}\left(1+{z\over m_j}\right), \qquad \Re z>0,
$$and the function $K$ defined by $\widehat{K}(z)=1/{P(z)}$.
 %
 %
 %
 The entire function $P(z)$ is called an ultradifferential polynomial. The following two operators are generators of local $K$-convoluted semigroups and appear in \cite[4. Example and final comments]{C}:
\begin{itemize}
\item[(i)] Let $X=L^2(\R)$ and $A=i{d^4\over dt^4}-{d^2\over dt^2}.$ Then $A$ generates an $K_1$-convoluted semigroup on $[0,\tau)$, where $\widehat{K}_1(z)=1/{P_1(z)}$  where $P_1(z)= \prod_{j=1}^{\infty}\left(1+{lz\over j^2}\right)$ for some $l>0$.

    In the context of ultradistribution semigroups, this example was first proposed by J. Chazarain (\cite[Remarque 6.4]{Ch71}). It was observed in \cite{[Ke97]} that one can take $A=iB$ where $B$ is the generator of a strongly continuous cosine function. Additional results are given in \cite{ko}, \cite{[KP]} including the case of Beurling ultradistributions. Other developments are discussed in \cite{ko}, including a generalization of the abstract Weierstrass formula.

\item[(ii)] Let $\omega$ be a $\sigma$-finite measure, the Lebesgue space $X=L^p(\Omega)$ ($1\le p\le \infty$) and $m:\Omega\to \C$ a measurable function. We define $Af=mf$ where $D(A)=\{f\in L^p(\Omega); mf\in L^p(\Omega)\}$ and
    $$
    \{z\in \C; \Re z \ge \alpha \vert z\vert^a+\beta\}\subset \rho(A).
    $$
   for $\alpha, \beta>0$ and $0<a<1$. For every $\tau>0$ there is $l>0$ such that $A$ generates a local $K_2$-convoluted semigroup on $[0,\tau)$ where
   $$
   P_2(z)= \prod_{j=1}^{\infty}\left(1+{lz\over j^{1\over a}}\right),
   $$
   where $\widehat{K}_2(z)=1/{P_2(z)}$.
\end{itemize}
 The following estimates are valid for the Gevrey sequences $M_j=(j!^s)$, $M_j=(j^{js})$ and $M_j=\Gamma(1+js)$ where $s>1$ is given.
 $$
 e^{(l\vert z\vert)^{a}}\le \vert P(z)\vert\le e^{(L\vert z\vert)^{a}},\,\, \Re z\ge 0,
 $$
 where $L$ is a positive constant, see e.g.  \cite[(1.1)]{C}.

We apply the Theorem \ref{ext} to conclude that $A$  generates a local $K^{\ast n}$-convoluted semigroup on $[0, n\tau)$.

\section{Appendix: k-Distribution Semigroups}

Let $X$ be  a Banach space. Vector valued algebraic distributions, i.e., linear and continuous maps from a test function space to the space of bounded linear operators, ${\mathcal B}(X)$, which satisfy an algebraic property (similar to Theorem \ref{main2} (ii)) have been studied deeply in a large numbers of papers, see \cite[Chapter 8 ]{[Fa]}. In this sense, distribution semigroups ({\it in the sense of Lions}, DS-L) were introduced by J.L. Lions in \cite{[Li60]}, see also \cite{Ch71}, \cite[Definition 7.1]{[AEK]}.  P.C. Kunstmann considered pre-distribution semigroups (or quasidistribution semigroup in the terminology of S. W. Wang) as linear and continuous maps ${\mathcal G}: \mathcal{D} \mapsto \mathcal{B}(X)$, ${\mathcal G}\in \mathcal{D}'( \mathcal{B}(X)),$ satisfying
\begin{itemize}
\item[(i)] ${\mathcal G}(\phi\ast \psi)={\mathcal G}(\phi){\mathcal G}(\psi)$ for $\phi, \psi\in  \mathcal{D},$
\item[(ii)] $\cap \{ \ker({\mathcal G}(\theta))\,\, \vert \,\, \theta \in  \mathcal{D}_0\}=\{0\}$,
\end{itemize}
see  \cite[Definition 2.1]{[Ku]} and \cite[Definition 3.3]{[Wa]}. In fact, a pre-distribution ${\mathcal G}$ can be regarded as a continuous linear map from $\mathcal{D}_+$ into
$\mathcal{B}(X)$, $ {\mathcal G}: \mathcal{D}_+ \mapsto \mathcal{B}(X)$, such that
\begin{itemize}
\item[(i)] ${\mathcal G}(\phi\ast \psi)={\mathcal G}(\phi){\mathcal G}(\psi)$ for $\phi, \psi\in  \mathcal{D}_+.$
\item[(ii)] $\cap \{ \ker({\mathcal G}(\theta))\,\, \vert \,\, \theta \in  \mathcal{D}_+\}=\{0\}$.
\end{itemize}
see \cite[Remark 3.4]{[Wa]}. The differences between quasi-distribution and distribution semigroups in the sense of Lions may be found in \cite[Remark 3.13]{[Ku]}.

Classes of Distribution semigroups on $(0,\infty)$ (in short DS on $(0,\infty)$) were considered in \cite[Definition 1]{[Ku1]}. The subspace $ \mathcal{D}'_+( \mathcal{B}(X))\subset \mathcal{D}'( \mathcal{B}(X)) $  is formed of the elements supported in $[0,\infty)$. A distribution semigroup on $(0,\infty)$, ${\mathcal G}\in \mathcal{D}'_+( \mathcal{B}(X)),$  is a continuous linear map from $\mathcal{D}$ into
$\mathcal{B}(X)$  $ {\mathcal G}: \mathcal{D} \mapsto \mathcal{B}(X)$, supported in $[0,\infty)$, such that
\begin{itemize}
\item[(i)] ${\mathcal G}(\phi\ast \psi)={\mathcal G}(\phi){\mathcal G}(\psi)$ for $\phi, \psi\in  \mathcal{D}_0$.
\item[(ii)] $\cap \{ \ker({\mathcal G}(\theta))\,\, \vert \,\, \theta \in  \mathcal{D}_0\}=\{0\}$.
\end{itemize}
Distribution semigroups on $(0,\infty)$ and quasi-distribution semigroups are not the same concept, see \cite[Remark 4]{[Ku1]}.
However, a particular class of  distribution semigroups on $(0,\infty)$ (simply called distribution semigroups, see \cite[Definition 2]{[Ku1]}) may be identified with  quasi-distribution semigroups, \cite[Theorem 1]{[Ku1]}.

Keeping in mind these definitions and Theorem \ref{main2}, we  introduce the concept of $k$-distribution semigroup.

\begin{definition} \label{kqsd}{\rm  Let $k\in L^{1}_{loc}(\mathbb{R}^{+})$ such that $0\in\textrm{supp}(k)$.  We say that a  linear and continuous map $ {\mathcal G}_k: \mathcal{D}_{k^{\ast \infty}} \mapsto \mathcal{B}(X)$ is a $k$-distribution semigroup, in short $k$-DS, if  it satisfies the following conditions.
\begin{itemize}
\item[(i)] ${\mathcal G}_k(\phi\ast \psi)={\mathcal G}_k(\phi){\mathcal G}_k(\psi)$ for $\phi, \psi\in  \mathcal{D}_{k^{\ast \infty}}.$
\item[(ii)] $\cap \{ \ker({\mathcal G}_k(\theta))\,\, \vert \,\, \theta \in  \mathcal{D}_{k^{\ast \infty}}\}=\{0\}$.

\end{itemize}

In the case that  $ {\mathcal G}_k$ is a $k$-distribution semigroup on $(0,\infty)$, then $\mathcal{D}_{k^{\ast \infty}}\not=\{0\}$ by (ii).}

\end{definition}

\begin{remark}{\rm
 Let ${\mathcal G}: \mathcal{D}\to \mathcal{B}(X)$ be a distribution semigroup (or quasi-distribution semigroup in the sense of  Wang). Then  ${\mathcal G}\circ \Lambda$   is a $k$-distribution semigroup for any $k\in L^{1}_{loc}(\mathbb{R}^{+})$ such that  $0\in\textrm{supp}(k)$ and $\mathcal{D}_{k^{\ast \infty}}= \mathcal{D}_+$; in particular  ${\mathcal G}\circ \Lambda$ is a $j_\alpha-DS$ for any $\alpha >0$ (see definition of $\Lambda$ and $j_\alpha$ in Section \ref{seccion}).}
\end{remark}






\medskip
For a given $k$-DS ${\mathcal G}_k$, define the operator $A'$ by

\begin{itemize}
\item[(i)] $D(A'):=\cup \{ \hbox{Im}({\mathcal G}_k(\theta))\,\, \vert \,\, \theta \in  \mathcal{D}_{k^{\ast \infty}}\}$.
\item[(ii)] $A'{\mathcal G}_k(\theta)x:=- {\mathcal G}_k(\theta')x-\theta(0)x$, for $x\in X$ and $\theta \in  \mathcal{D}_{k^{\ast \infty}}.$

\end{itemize}

\begin{proposition} The operator $A'$ is well defined and closable.

\end{proposition}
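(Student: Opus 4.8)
The plan is to follow the standard argument for algebraic distributions, adapted to the test-function algebra $\mathcal{D}_{k^{\ast\infty}}$. First I would check that $A'$ is well defined. The potential ambiguity is that a single element $y\in D(A')$ may be written as $y={\mathcal G}_k(\theta)x$ in several ways, or even $y={\mathcal G}_k(\theta_1)x_1={\mathcal G}_k(\theta_2)x_2$; one must show that the prescription in (ii) gives the same vector regardless of the representation. The key tool is the homomorphism property (i) of Definition \ref{kqsd} together with the non-degeneracy condition (ii) of that definition. Concretely, suppose ${\mathcal G}_k(\theta_1)x_1={\mathcal G}_k(\theta_2)x_2$. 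For an arbitrary $\varphi\in\mathcal{D}_{k^{\ast\infty}}$ apply ${\mathcal G}_k(\varphi)$ to both candidate values of $A'y$, i.e. compare ${\mathcal G}_k(\varphi)\bigl(-{\mathcal G}_k(\theta_1')x_1-\theta_1(0)x_1\bigr)$ with the analogous expression for $\theta_2,x_2$. Using ${\mathcal G}_k(\varphi){\mathcal G}_k(\theta_i')={\mathcal G}_k(\varphi\ast\theta_i')$ and the identity $\varphi\ast\theta_i'=(\varphi\ast\theta_i)'-\varphi'\ast\theta_i$ (valid since differentiation of the convolution is the convolution with the derivative, and $\mathcal{D}_{k^{\ast\infty}}$ is closed under differentiation, cf. (\ref{derivada})), together with ${\mathcal G}_k(\varphi){\mathcal G}_k(\theta_i)={\mathcal G}_k(\varphi){\mathcal G}_k(\theta_i)$ and ${\mathcal G}_k(\theta_i)x_i$ being equal, one rewrites everything in terms of ${\mathcal G}_k((\varphi\ast\theta_i)')$, ${\mathcal G}_k(\varphi'\ast\theta_i)$ acting on $x_i$, and the scalar factors $\theta_i(0)$, $\varphi(0)$. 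After cancellation the two expressions coincide. Since $\varphi\in\mathcal{D}_{k^{\ast\infty}}$ was arbitrary, non-degeneracy (ii) forces the two candidate values of $A'y$ to be equal. Hence $A'$ is single-valued and well defined, and it is plainly linear.

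Next I would verify closability. Suppose $(y_n)\subset D(A')$ with $y_n\to 0$ and $A'y_n\to z$ in $X$; the goal is $z=0$. Fix $\varphi\in\mathcal{D}_{k^{\ast\infty}}$ and apply the bounded operator ${\mathcal G}_k(\varphi)$. By continuity, ${\mathcal G}_k(\varphi)y_n\to 0$ and ${\mathcal G}_k(\varphi)A'y_n\to {\mathcal G}_k(\varphi)z$. Now I claim a commutation relation of the form ${\mathcal G}_k(\varphi)A'y = A'{\mathcal G}_k(\varphi)y - $ (a term that is continuous in $y$), or more usefully that ${\mathcal G}_k(\varphi)A'y$ can be rewritten so that its limit is controlled. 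Writing $y_n={\mathcal G}_k(\theta_n)x_n$, we have ${\mathcal G}_k(\varphi)A'y_n = -{\mathcal G}_k(\varphi\ast\theta_n')x_n-\theta_n(0){\mathcal G}_k(\varphi)x_n$; using $\varphi\ast\theta_n' = \varphi'\ast\theta_n + \bigl(\text{boundary term involving }\theta_n(0)\varphi - \varphi(0)\theta_n\bigr)$ — more precisely the identity $(\varphi\ast\theta)' = \varphi'\ast\theta + \varphi(0)\theta = \varphi\ast\theta' + \theta(0)\varphi$, hence $\varphi\ast\theta_n' = (\varphi\ast\theta_n)' - \theta_n(0)\varphi$ — one gets
\begin{equation*}
{\mathcal G}_k(\varphi)A'y_n = -{\mathcal G}_k\bigl((\varphi\ast\theta_n)'\bigr)x_n + \theta_n(0){\mathcal G}_k(\varphi)x_n - \theta_n(0){\mathcal G}_k(\varphi)x_n = -{\mathcal G}_k\bigl((\varphi\ast\theta_n)'\bigr)x_n.
\end{equation*}
But $-{\mathcal G}_k((\varphi\ast\theta_n)')x_n = A'\bigl({\mathcal G}_k(\varphi\ast\theta_n)x_n\bigr) + (\varphi\ast\theta_n)(0)x_n = A'\bigl({\mathcal G}_k(\varphi)y_n\bigr) + (\varphi\ast\theta_n)(0)x_n$. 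The scalar $(\varphi\ast\theta_n)(0)=0$ since convolutions on $\R^+$ vanish at the origin. Therefore ${\mathcal G}_k(\varphi)A'y_n = A'\bigl({\mathcal G}_k(\varphi)y_n\bigr)$. Now ${\mathcal G}_k(\varphi)y_n\to 0$ and ${\mathcal G}_k(\varphi)A'y_n\to {\mathcal G}_k(\varphi)z$; but $A'$ applied to $\mathrm{Im}\,{\mathcal G}_k(\varphi)$ is, by (ii) of the definition of $A'$, given by the explicit continuous formula $-{\mathcal G}_k(\varphi')x - \varphi(0)x$ evaluated at any preimage, and since ${\mathcal G}_k(\varphi)y_n = {\mathcal G}_k(\varphi){\mathcal G}_k(\theta_n)x_n = {\mathcal G}_k(\varphi\ast\theta_n)x_n\to 0$, one has $A'({\mathcal G}_k(\varphi)y_n) = -{\mathcal G}_k((\varphi\ast\theta_n)')x_n$ whose limit must then be $0$ by the same well-definedness/continuity bookkeeping. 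Hence ${\mathcal G}_k(\varphi)z=0$ for every $\varphi\in\mathcal{D}_{k^{\ast\infty}}$, and non-degeneracy (ii) gives $z=0$. Thus $A'$ is closable.

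I expect the main obstacle to be the careful handling of the convolution-versus-differentiation boundary terms and, more delicately, ensuring the limiting argument in the closability step is genuinely justified rather than circular: the step where I claimed ``$A'({\mathcal G}_k(\varphi)y_n)\to 0$'' needs the continuity of the map $f\mapsto W_{k^{\ast n}}f$ on a fixed $L^1[0,n\tau]$ (open mapping theorem, as used in Theorem \ref{main2}(i)) together with the fact that convergence $f_n\to f$ in $\mathcal{D}_{k^{\ast\infty}}$ localizes the supports into a common $[0,n\tau]$ — or, abstractly, it needs only that ${\mathcal G}_k$ is sequentially continuous on $\mathcal{D}_{k^{\ast\infty}}$, which is part of the hypothesis that ${\mathcal G}_k$ is a $k$-DS. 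So the cleanest route is: (a) well-definedness via (i)+(ii); (b) derive the commutation ${\mathcal G}_k(\varphi)A'y = A'({\mathcal G}_k(\varphi)y)$ for $y\in D(A')$, $\varphi\in\mathcal{D}_{k^{\ast\infty}}$, which is the technical heart; (c) run the closedness limit through ${\mathcal G}_k(\varphi)$ and invoke non-degeneracy to pass back. Step (b) is where the convolution identities and the vanishing of convolutions at $0$ do all the work, and it is the step I would write out in full detail.
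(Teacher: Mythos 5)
Your argument is correct and is essentially the paper's own proof: both parts rest on the convolution identity $\theta\ast\phi'=\theta'\ast\phi+\theta(0)\phi-\phi(0)\theta$, which via the homomorphism property yields the intertwining $\mathcal{G}_k(\theta)\bigl(-\mathcal{G}_k(\phi')x-\phi(0)x\bigr)=\bigl(-\mathcal{G}_k(\theta')-\theta(0)\bigr)\mathcal{G}_k(\phi)x$, followed by an appeal to the non-degeneracy condition. The only place where you hedge --- justifying $A'(\mathcal{G}_k(\varphi)y_n)\to 0$ --- is exactly where the paper writes this quantity as $\bigl(-\mathcal{G}_k(\varphi')-\varphi(0)\bigr)y_n$, a fixed bounded operator applied to $y_n\to 0$, so no circularity or continuity of $A'$ is needed.
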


\begin{proof} Assume that ${\mathcal G}_k(\phi)x= {\mathcal G}_k(\psi)y$ for  some $x, y \in X$ and $\phi, \psi\in \mathcal{D}_{k^{\ast \infty}}.$ Now take $\theta  \in \mathcal{D}_{k^{\ast \infty}}$. Since
$$
\theta\ast \phi'(t)= \theta'\ast \phi(t)+\theta(0)\phi(t)-\phi'(0)\theta(t), \qquad t\ge 0,
$$
see, for example \cite[Proposition 3.1 (iii)]{[Wa]}, we get that
$$
{\mathcal G}_k(\theta){\mathcal G}_k(\phi')x=  {\mathcal G}_k(\theta'){\mathcal G}_k (\phi)x+\theta(0){\mathcal G}_k (\phi)x-\phi'(0){\mathcal G}_k(\theta)x
$$
and hence
$$
\left(-{\mathcal G}_k(\theta')-\theta(0)\right){\mathcal G}_k (\phi)x= {\mathcal G}_k(\theta)\left(-{\mathcal G}_k(\phi')x  -\phi'(0)x\right).
$$
Similarly,
$$
\left(-{\mathcal G}_k(\theta')-\theta(0)\right){\mathcal G}_k (\psi)y= {\mathcal G}_k(\theta)\left(-{\mathcal G}_k(\psi')y  -\psi'(0)y\right).
$$
By Definition  \ref{kqsd} (ii), we conclude that
$$
-{\mathcal G}_k(\phi')x  -\phi'(0)x=
-{\mathcal G}_k(\psi')y  -\psi'(0)y
$$
and $A'$ is well defined.

To prove that $A'$ is closable, let $(x_n)_{n\ge 1}\subset D(A')$ be such that $x_n\to 0$, $A'x_n\to y$. We write $x_n={\mathcal G}_k(\phi_n)z_n$ with $(\phi_n)_{n\ge 1}\subset\mathcal{D}_{k^{\ast \infty}} $ and $(z_n)_{n\ge 1}\subset X$. Take $\theta \in \mathcal{D}_{k^{\ast \infty}}$  and then
\begin{eqnarray*}
{\mathcal G}_k(\theta)y&=& \lim_{n\to \infty}{\mathcal G}_k(\theta)A'x_n=\lim_{n\to \infty}{\mathcal G}_k(\theta)A' {\mathcal G}_k(\phi_n)z_n\\
 &=&\lim_{n\to \infty}{\mathcal G}_k(\theta)\left(- {\mathcal G}_k(\phi'_n)z_n-\phi_n(0)z_n\right)\\
 &=&\lim_{n\to \infty}\left(-{\mathcal G}_k(\theta')-\theta(0)\right){\mathcal G}_k (\phi_n)z_n=\lim_{n\to \infty}\left(-{\mathcal G}_k(\theta')-\theta(0)\right)x_n=0.
 \end{eqnarray*}
This implies that $y=0$ by Definition  \ref{kqsd} (ii).\end{proof}

\begin{definition} The closure of $A'$, denoted by $A$, is called the generator of a  ${\mathcal G}_k$.
\end{definition}
\medskip

 Other definitions of generator of distribution semigroups are given using approximate units (see \cite[Definition 7.1]{[AEK]}) or the distribution $-\delta'_0$ (\cite[Definition 3.3]{[Ku]} and \cite[Proposition 1]{[Ku1]}).  In our case, given a  $k$-DS ${\mathcal G}_k$  and its generator $(A, D(A))$, then

\begin{eqnarray*}
D(A)&\subset& \{ x\in X \vert \hbox{ exists } y \in X \hbox{ such that } {\mathcal G}_k(\theta)y=- {\mathcal G}_k(\theta')x-\theta(0)x \hbox{ for any } \theta\in  \mathcal{D}_{k^{\ast \infty}}\};\\
Ax&=&y, \qquad x\in D(A),
\end{eqnarray*}


\begin{theorem}\label{khomo} Let $k\in L^1_{loc}(\mathbb{R}^+)$ with $0\in {supp}(k)$, $(S_k(t))_{t \in [0,\tau]}$ a non-degenerate local  $k$-convoluted semigroup generated by $A$ and ${\mathcal G}_k: \mathcal{D}_{k^{\ast \infty}}\to
{\mathcal B}(X)$  the map defined in Theorem \ref{main2}. Then  ${\mathcal G}_k$ is a $k$-DS generated by $A$.
\end{theorem}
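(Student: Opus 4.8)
The strategy is to extract from Theorem~\ref{main2} everything that carries over verbatim and then establish the two genuinely new facts: the non‑degeneracy condition (ii) of Definition~\ref{kqsd}, and the identification of the generator of ${\mathcal G}_k$ with $A$. By Theorem~\ref{main2}(i)--(ii) the map ${\mathcal G}_k\colon\mathcal D_{k^{\ast\infty}}\to\mathcal B(X)$ is linear, continuous and multiplicative for $\ast$, so condition (i) of Definition~\ref{kqsd} holds at once; I will also use that $\mathcal D_{k^{\ast\infty}}$ is stable under differentiation (apply (\ref{derivada}) to each $k^{\ast n}$) and under translations (apply (\ref{transla}) and the remark after Definition~\ref{definf}).

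For (ii), suppose $x\in X$ satisfies ${\mathcal G}_k(\theta)x=0$ for every $\theta\in\mathcal D_{k^{\ast\infty}}$. Fixing such a $\theta$, one has $\theta'\in\mathcal D_{k^{\ast\infty}}$ as well, so ${\mathcal G}_k(\theta')x=0$, and Theorem~\ref{main2}(iii) yields
$$0=A\,{\mathcal G}_k(\theta)x=-{\mathcal G}_k(\theta')x-\theta(0)x=-\theta(0)x,$$
i.e. $\theta(0)x=0$ for all $\theta\in\mathcal D_{k^{\ast\infty}}$. Assuming $\mathcal D_{k^{\ast\infty}}\neq\{0\}$ (implicit in the statement, cf.\ the comment after Definition~\ref{kqsd}), choose $\theta_0\neq0$ in $\mathcal D_{k^{\ast\infty}}$ and $u_0>0$ with $\theta_0(u_0)\neq0$; the translate $(\theta_0)_{u_0}$ lies in $\mathcal D_{k^{\ast\infty}}$ and $(\theta_0)_{u_0}(0)=\theta_0(u_0)\neq0$, hence $x=0$. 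Thus (ii) holds and ${\mathcal G}_k$ is a $k$-DS.

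It remains to prove that $A$ is the generator of ${\mathcal G}_k$, i.e. $\overline{A'}=A$ for the operator $A'$ introduced just before this theorem. One inclusion is free: Theorem~\ref{main2}(iii) shows that for $\theta\in\mathcal D_{k^{\ast\infty}}$ and $y\in X$ we have ${\mathcal G}_k(\theta)y\in D(A)$ and $A\,{\mathcal G}_k(\theta)y=-{\mathcal G}_k(\theta')y-\theta(0)y=A'{\mathcal G}_k(\theta)y$, so $A'\subset A$ and, $A$ being closed, $\overline{A'}\subset A$. For the reverse inclusion I would fix $x\in D(A)$ and approximate it in the graph norm by ${\mathcal G}_k(\theta_j)x$, where $(\theta_j)_{j\ge1}\subset\mathcal D_{k^{\ast\infty}}$ is an approximate identity with $\operatorname{supp}\theta_j\subset(0,1/j)$, $\theta_j(0)=0$ and $\int_0^\infty\theta_j(t)\,dt=1$. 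Plugging the Duhamel identity $S_{k^{\ast n}}(t)x=\bigl(\int_0^t k^{\ast n}(s)\,ds\bigr)x+\int_0^t S_{k^{\ast n}}(s)Ax\,ds$, valid for $x\in D(A)$, into ${\mathcal G}_k(\theta_j)x=\int_0^{n\tau}W_{k^{\ast n}}\theta_j(t)S_{k^{\ast n}}(t)x\,dt$, and using Fubini together with $k^{\ast n}\circ W_{k^{\ast n}}\theta_j=\theta_j$, one gets ${\mathcal G}_k(\theta_j)x=x+{\mathcal G}_k(\chi\circ\theta_j)(Ax)$ with $\chi\circ\theta_j\in\mathcal D_{k^{\ast\infty}}$ supported in $[0,1/j]$; the same computation with $\theta_j'$ (and $\chi\circ\theta_j'=-\theta_j$) gives $A\,{\mathcal G}_k(\theta_j)x=-{\mathcal G}_k(\theta_j')x={\mathcal G}_k(\theta_j)(Ax)$. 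Once ${\mathcal G}_k(\theta_j)x\to x$ is checked, closedness of $A$ makes it enough to know that $A\,{\mathcal G}_k(\theta_j)x$ converges at all, its limit being then forced to be $Ax$; so $x\in D(\overline{A'})$ with $\overline{A'}x=Ax$, completing $A\subset\overline{A'}$.

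The delicate point, and the one I expect to be the main obstacle, is precisely this last approximation. First, the approximate identity must be produced \emph{inside} the potentially very thin space $\mathcal D_{k^{\ast\infty}}=\bigcap_n\mathcal D_{k^{\ast n}}$: when $\mathcal D_{k^{\ast\infty}}=\mathcal D_+$ (for $\alpha$-times integrated semigroups, where $W_{j_\alpha}$ is the Weyl derivative and is defined on all of $\mathcal D_+$, and for the examples of Section~6 with $\mathcal D_k=\mathcal D_+$) ordinary mollifiers suffice, but in the genuinely thin case (the Gevrey/ultradistribution examples of Section~6, where $\mathcal D_{k^{\ast\infty}}$ is a Denjoy--Carleman-type class) one must invoke non-quasianalyticity to obtain compactly supported ultradifferentiable bumps of arbitrarily small support, and then control supports and $L^1$-limits via Lemma~\ref{equivalencia} and the open-mapping argument already used in the proof of Theorem~\ref{main2}(i). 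Second, the convergences ${\mathcal G}_k(\theta_j)x\to x$ and $A\,{\mathcal G}_k(\theta_j)x\to Ax$ hinge on balancing the $L^1$-growth of $W_{k^{\ast n}}\theta_j$ against the modulus of continuity of $S_{k^{\ast n}}(\cdot)y$ at the origin, which is where the norm estimates of Section~3 (and, in the exponentially bounded situations, Theorem~\ref{teoalgebras}) enter. Everything else is routine bookkeeping with the convolution identities of Sections~2 and~3.
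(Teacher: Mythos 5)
Your verification of conditions (i) and (ii) of Definition \ref{kqsd} is exactly the paper's argument: multiplicativity is Theorem \ref{main2}(ii), and non-degeneracy follows by applying Theorem \ref{main2}(iii) to obtain $\theta(0)x=0$ for every $\theta\in\mathcal{D}_{k^{\ast\infty}}$ and then using the translates $\theta_u\in\mathcal{D}_{k^{\ast\infty}}$ to get $\theta(u)x=0$ for all $u\ge0$, hence $x=0$. (The paper, like you, tacitly needs $\mathcal{D}_{k^{\ast\infty}}\neq\{0\}$ here; it records this only as a remark after Definition \ref{kqsd}.)

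The divergence is in the generator identification, and it cuts the other way from what you might expect. The paper's entire argument for that part is the observation that $A\,{\mathcal G}_k(\theta)x=-{\mathcal G}_k(\theta')x-\theta(0)x=A'{\mathcal G}_k(\theta)x$ by Theorem \ref{main2}(iii), so $A'\subset A$ and, $A$ being closed, $\overline{A'}\subset A$; it then simply declares $A$ to be the generator. The reverse inclusion $A\subset\overline{A'}$ --- which you correctly isolate as the delicate point --- is not addressed in the paper at all. Your sketch of that direction is sound in outline: the identity ${\mathcal G}_k(\theta_j)x=x+{\mathcal G}_k(\chi\circ\theta_j)(Ax)$ for $x\in D(A)$ is correct (it rests on $\int_0^\infty W_{k^{\ast n}}\theta_j(t)\,(\chi\ast k^{\ast n})(t)\,dt=\int_0^\infty\theta_j(t)\,dt$ and on $\chi\circ W_{k^{\ast n}}\theta_j=W_{k^{\ast n}}(\chi\circ\theta_j)$), but, as you say yourself, it is incomplete: the construction of approximate identities inside the possibly very thin space $\mathcal{D}_{k^{\ast\infty}}$ and the convergence ${\mathcal G}_k(\chi\circ\theta_j)(Ax)\to0$, which requires balancing the $L^1$-growth of $W_{k^{\ast n}}(\chi\circ\theta_j)$ against the vanishing of $\Vert S_{k^{\ast n}}(t)Ax\Vert$ at $t=0$, are left open. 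So relative to what the paper actually proves, your argument covers everything and by the same route; the half-inclusion you attempt beyond that is a genuine gap in your write-up, but it is equally a gap in the paper's own proof.
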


\begin{proof} The first condition in Definition \ref{kqsd} appears in Theorem \ref{main2} (ii). Take $x\in X$ such that  ${\mathcal G}_k(\phi)(x)=0$ for any $\phi \in \mathcal{D}_{k^{\ast \infty}}$. Since $\phi'\in   \mathcal{D}_{k^{\ast \infty}}$,  we apply Theorem \ref{main2} (iii) to conclude  $0=\phi(0)x$ for any  $\phi\in \mathcal{D}_{k^{\ast \infty}}$ and then $0=\phi_u(0)x$ for any $u\ge 0$. We conclude that $0=\phi(u)x$ for any $u\ge 0$ and $\phi \in \mathcal{D}_{k^{\ast \infty}}$. Then $x=0$ and the condition Definition \ref{kqsd} (ii) holds.

Note that $A{\mathcal G}_k(\theta)x:=- {\mathcal G}_k(\theta')x-\theta(0)x$, for $x\in X$ and $\theta \in  \mathcal{D}_{k^{\ast \infty}}$, see Theorem \ref{main2} (iii). As $A$ is a closed operator, we conclude that $A$ is the generator of ${\mathcal G}_k$.
\end{proof}

A straightforward consequence of Theorem  \ref{khomo} is the next corollary.

\begin{corollary}\label{khomo2} Let  $(S_\alpha(t))_{t \in [0,\tau]}$ a non-degenerate local  $\alpha$-times integrated semigroup generated by $A$ and ${\mathcal G}_\alpha: \mathcal{D}_{+}\to
{\mathcal B}(X)$  the map defined in Corollary \ref{bloco}. Then  ${\mathcal G}_\alpha$ is a $j_\alpha$-DS generated by $A$ with $\alpha >0$.
\end{corollary}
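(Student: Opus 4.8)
The plan is to read Corollary \ref{khomo2} off Theorem \ref{khomo} by specializing the kernel to $k=j_\alpha$, where $j_\alpha(t)=t^{\alpha-1}/\Gamma(\alpha)$. First I would verify the standing hypotheses of Theorem \ref{khomo} for this choice: since $\alpha>0$, the function $j_\alpha$ lies in $L^1_{loc}(\R^+)$, and because $j_\alpha(t)>0$ for every $t>0$ one has ${supp}(j_\alpha)=[0,\infty)$, so in particular $0\in{supp}(j_\alpha)$. Next, by the definition of $\alpha$-times integrated semigroup recalled in Section 4, a non-degenerate local $\alpha$-times integrated semigroup $(S_\alpha(t))_{t\in[0,\tau]}$ generated by $A$ is precisely a non-degenerate local $j_\alpha$-convoluted semigroup generated by $A$, so the data of Corollary \ref{khomo2} are an instance of the data of Theorem \ref{khomo}.

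Second, I would identify the objects occurring in Theorem \ref{main2}/Theorem \ref{khomo} for $k=j_\alpha$ with those of Corollary \ref{bloco}. The semigroup property $j_\alpha\ast j_\beta=j_{\alpha+\beta}$ gives $j_\alpha^{\ast n}=j_{n\alpha}$, hence $W_{j_\alpha^{\ast n}}=W_{j_{n\alpha}}$, which by the Example in Section 3 is the Weyl fractional derivative $W_{n\alpha}$; correspondingly the extension $(S_{j_\alpha^{\ast n}}(t))_{t\in[0,n\tau]}$ furnished by Theorem \ref{ext} is exactly the $n\alpha$-times integrated semigroup $(S_{n\alpha}(t))_{t\in[0,n\tau]}$ of Corollary \ref{inte}. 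The same Example yields $\mathcal{D}_{j_\alpha^{\ast\infty}}=\mathcal{D}_{+}$. Combining these identifications, the homomorphism ${\mathcal G}_k$ of Theorem \ref{main2}, with $k=j_\alpha$, coincides on its domain $\mathcal{D}_{k^{\ast\infty}}=\mathcal{D}_{+}$ with the map ${\mathcal G}_\alpha$ of Corollary \ref{bloco}.

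Finally I would invoke Theorem \ref{khomo}, which states that ${\mathcal G}_k$ is a $k$-DS generated by $A$; reading this with $k=j_\alpha$ and the identifications just made, ${\mathcal G}_\alpha$ is a $j_\alpha$-DS generated by $A$, which is the assertion. The only delicate point, and the nearest thing to an obstacle, is the bookkeeping in this identification: one must make sure that $\mathcal{D}_{j_\alpha^{\ast\infty}}$ is genuinely all of $\mathcal{D}_{+}$ (so that ${\mathcal G}_\alpha$ is defined on $\mathcal{D}_{+}$, as claimed in Corollary \ref{bloco}) and that the Weyl-derivative description of $W_{j_\alpha^{\ast n}}$ and the extension formula of Corollary \ref{inte} are applied consistently with $n\alpha$ in place of the general exponent. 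All of these facts are already recorded in Sections 3 and 4, so no further work is required beyond citing them.
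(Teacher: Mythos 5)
Your proposal is correct and is precisely the argument the paper intends: the paper states Corollary \ref{khomo2} as ``a straightforward consequence of Theorem \ref{khomo}'', i.e.\ the specialization $k=j_\alpha$ together with the identifications $j_\alpha^{\ast n}=j_{n\alpha}$, $\mathcal{D}_{j_\alpha^{\ast\infty}}=\mathcal{D}_+$, and $W_{j_\alpha^{\ast n}}=W_{n\alpha}$ already recorded in Sections 2--4. Your extra bookkeeping (checking $0\in{supp}(j_\alpha)$ and matching ${\mathcal G}_{j_\alpha}$ with ${\mathcal G}_\alpha$ of Corollary \ref{bloco}) is exactly what makes the deduction go through.
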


When the generator $A$ is densely defined in Corollary \ref{khomo2}, it is equivalent that $A$ generates a $n$-times integrated semigroup for some $n\in \N$  and $A$ is the generator of a distribution semigroup in the sense of Lions, see \cite[Theorem 7.2, Corollary 7.3]{[AEK]}.

To consider the test-function space $\mathcal{D}_{k^{\ast \infty}}$ may be given for a wide class of vector-valued distributions such that different distribution semigroups fall into the scope of this approach.

\end{document}